\newenvironment{poliabstract}[1]
  {\begin{abstract}}
  {\end{abstract}}
\newtheoremstyle{theoremnoperiod}
  {\topsep}   
  {\topsep}   
  {\normalfont}  
  {0pt}       
  {\bfseries} 
  {}          
  {5pt plus 1pt minus 1pt} 
  {}          
\newtheorem*{theorem*}{Théorème}
\newtheorem{theorem}{Théorème}[section]
\newtheorem{corollary}[theorem]{Corollaire}
\newtheorem{lemma}[theorem]{Lemme}
\newtheorem*{conjecture*}{Conjecture}
\newtheorem{property}[theorem]{Proposition}
\numberwithin{equation}{section}
\theoremstyle{theoremnoperiod}
\newtheorem*{thmnodot*}{Théorème}
\newtheorem{thmnodot}[theorem]{Théorème}
\newtheorem{lemmanodot}[theorem]{Lemme}
\DeclareMathOperator{\modulo}{mod}
\DeclareMathOperator{\sign}{sgn}
\DeclareMathOperator{\N}{\mathbb{N}}
\DeclareMathOperator{\Z}{\mathbb{Z}}
\DeclareMathOperator{\R}{\mathbb{R}}
\DeclareMathOperator{\C}{\mathbb{C}}
\DeclareMathOperator{\sC}{\mathcal{C}}
\def\HH{{\mathscr H}}
\DeclareMathOperator{\e}{\rm e}
\def\d{\,{\rm d}}
\def\1{{\bf 1}}
\DeclareMathOperator{\PP}{\mathbb P}
\DeclareMathOperator{\A}{\mathcal A}
\DeclareMathOperator{\sJ}{\mathscr J}
\DeclareMathOperator{\I}{\mathscr I}
\DeclareMathOperator{\cI}{\mathscr I}
\DeclareMathOperator{\cK}{\mathscr K}
\DeclareMathOperator{\cP}{\mathscr P}
\DeclareMathOperator{\cE}{\mathscr E}
\DeclareMathOperator{\cD}{\mathscr D}
\DeclareMathOperator{\gc}{\mathfrak c}
\DeclareMathOperator{\gr}{\mathfrak r}
\DeclareMathOperator{\gs}{\mathfrak s}
\DeclareMathOperator{\gh}{\mathfrak h}
\def\gR{{\mathfrak R}}
\def\p{{p_{m,\nu}}}
\def\res{\mathop{\hbox{\rm R\'es}}\nolimits}
\renewcommand{\leq}{\leqslant}
\renewcommand{\geq}{\geqslant}
\newcommand{\JJ}{\mathcal{J}_\varepsilon}
\definecolor{vert}{rgb}{0,0.5,0}
\definecolor{violet}{rgb}{0.7,0.1,0.8}
\title{Étude statistique du facteur premier médian, 2 : \goodbreak lois locales}
\author{Jonathan Rotgé\thanks{Adresse e-mail : jonathan.rotge@etu.univ-amu.fr\\ \quad 2020 {\it Mathematics Subject Classification}: 11N25, 11N37.\\ {\it Key words and phrases.} middle prime factor, local laws.}\\ \\ {\it\small  Université d'Aix-Marseille,\, Institut de Mathématiques de Marseille CNRS UMR 7373,}\\ {\it\small 163 Avenue De Luminy, Case 907, 13288 Marseille Cedex 9, FRANCE}}
\date{}
\begin{document}
\maketitle
\thispagestyle{empty}

\selectlanguage{english}
\begin{poliabstract}{Abstract} 
We estimate the local laws of the distribution of the middle prime factor of an integer, defined according to multiplicity or not.
An asymptotic estimate with effective remainder is provided for a wide range of values. In particular this enables to precisely describe the  phase transition occurring in the relevant distribution.
	\end{poliabstract}

\selectlanguage{french}
\begin{poliabstract}{Résumé}
   Nous évaluons les lois locales de la répartition du facteur premier médian d'un entier, défini en tenant compte ou non, de la multiplicité. Une formule asymptotique avec terme d'erreur effectif est fournie pour un large domaine de valeurs. En particulier, cela permet de décrire précisément la transition de phase qui s'y opère.
\end{poliabstract}
\bigskip


\section{Introduction et énoncé du résultat}
\subsection{Description  et historique du problème}\label{dhp}
Depuis les travaux fondateurs de Hardy et Ramanujan \cite{bib:hardyrama}, la répartition des facteurs premiers des entiers constitue, au sein de la théorie probabiliste des nombres, un domaine privilégié d'investigation.\par 
Pour tout entier naturel $n\geq 2$, posons
\[\omega(n):=\sum_{p|n}1,\qquad\Omega(n):=\sum_{p^k||n}k,\]
et désignons par $\{q_j(n)\}_{1\leq j\leq\omega(n)}$ (resp. $\{Q_j(n)\}_{1\leq j\leq\Omega(n)}$) la suite croissante des facteurs premiers de~$n$ comptés sans (resp. avec) multiplicité. Le théorème d'Erd\H{o}s-Kac \cite{bib:erdoskac} stipule que, pour $\nu\in\{\omega,\Omega\}$, la relation asymptotique
\[{1\over x}\Big|\Big\{n\leqslant x:\nu(n)\leqslant \log_2x+t\sqrt{\log_2x }\Big\}\Big|=\Phi(t)+o(1)\quad (x\to\infty),\]
où $\Phi(t):=\int_{-\infty}^t \e^{-v^2/2}\d v/\sqrt{2\pi}$ désigne la fonction de répartition de la loi normale, a lieu uniformément en $t\in\R$, et Rényi et Turán \cite{bib:renyi;turan} ont fourni une évaluation optimale du terme d'erreur. Ici et dans la suite, nous notons $\log_k$ la $k$-ième itérée de la fonction logarithme.\par 
Erd\H os \cite{bib:erdos} a énoncé en 1946 une loi du logarithme itéré impliquant que, si $\xi(x)\to\infty$, l'encadrement
\begin{equation}
\label{eq:repart:L2qj+}
	|\log_2 q_j(n)-j|\leq(1+\varepsilon)\sqrt{2j\log_2 j}\quad\big(\xi(x)<j\leq\omega(n)\big)
\end{equation}
est valide pour tous les entiers $n\leqslant x$ sauf au plus $o(x)$ exceptions: voir \cite[th.\thinspace12]{bib:hall_tenenbaum} pour une démonstration.
\par 
La littérature contient de nombreuses variations sur le thème de la répartition usuelle des $q_j(n)$. Mentionnons, à titre d'illustration, que des développements ultérieurs ont permis de l'utiliser pour façonner un modèle du mouvement brownien : voir notamment Manstavi\v{c}ius \cite{bib:mansta1},\,\cite{bib:mansta2},\,\cite{bib:mansta3}.
\par 
Plusieurs travaux récents portent sur l’étude du facteur premier médian
\[\p(n):=\begin{cases}q_{\lceil\omega(n)/2\rceil}(n)&{\rm si\;} \nu=\omega,\\ Q_{\lceil\Omega(n)/2\rceil}(n)&{\rm si\;} \nu=\Omega.\end{cases}\]\par
Améliorant une estimation de De Koninck, Doyon et Ouellet \cite{bib:doyon}, McNew, Singha Roy et Pollack \cite{bib:pollack} ont ainsi obtenu l'évaluation uniforme en $t\in\R$
\begin{equation}\label{eq:freq:KoDoOu}
	\frac1x\Big|\Big\{n\leq x:\log_2\p(n)-\tfrac12\log_2 x<t \sqrt{\log_2 x}\Big\}\Big|=\Phi(2t)+O\bigg(\frac{\{\log_3 x\}^{3/2}}{\sqrt{\log_2 x}}\bigg)\quad(x\geq 16).
\end{equation}
 Ils montrent également que
\begin{equation}
\label{vmNPS}
\sum_{n\leqslant x}\log p_{m,\Omega}(n)=A_{\Omega}x(\log x)^{1/\varphi}\bigg\{1+O\bigg(\frac{\{\log_3 x\}^{3/2}}{\sqrt{\log_2 x}}\bigg)\bigg\}
\end{equation}
où la constante $A_\Omega\approx0{,}414005$\footnote{Dans \cite{bib:pollack}, les auteurs indiquent l'approximation numérique  $A_\Omega\approx1{,}313314$, qui est erronée.} est précisément définie.
Pour certaines plages de valeurs du paramètre $p$, ils proposent en outre une formule asymptotique pour les lois locales
\begin{equation}
	M_\Omega(x,p):=|\{n\leq x:p_{m,\Omega}(n)=p\}|\quad(3\leq p\leq x).
\end{equation}\par
Dans la série d'articles dont le présent travail constitue le second volet, nous nous intéressons à certains problèmes laissés ouverts.  Dans \cite{bib:papier3}, nous montrons que, pour $\nu\in\{\omega,\Omega\}$, le terme d'erreur optimal de \eqref{eq:freq:KoDoOu} est $\asymp1/\sqrt{\log_2 x}$, et que cette valeur est optimale. Dans \cite{bib:papier1} nous obtenons le développement asymptotique 
	\begin{equation}\label{estlogpm}
		\sum_{n\leqslant x}\log p_{m,\nu}(n)=A_\nu x(\log x)^{1/\varphi}\bigg\{1+\sum_{1\leq j\leq J}\frac{\gc_{\nu,j}}{(\log_2 x)^j}+O\bigg(\frac{1}{\{\log_2 x\}^{J+1}}\bigg)\bigg\}\qquad(x\geq3),
	\end{equation}
valable pour tout $J\in\N$ fixé et où  $A_\nu$ et la suite réelle $\{\gc_{\nu,j}\}_{j\in\N^*}$ sont précisément définies. Comme $\gc_{\nu,1}\neq0$, la troncature de \eqref{estlogpm} au premier ordre fournit une formule asymptotique avec terme d'erreur optimal $\asymp 1/\log_2x$, améliorant ainsi significativement  \eqref{vmNPS}.
\par 
Dans cette seconde étude, nous nous proposons de préciser et de généraliser les résultats concernant les lois locales. Il s'agit donc
d'évaluer, dans l'ensemble du domaine $\log_2p\asymp\log_2x$, les quantités
\begin{equation}\label{def:Mnu}
	M_\nu(x,p):=|\{n\leq x:\p(n)=p\}|\quad(\nu\in\{\omega,\Omega\},\,3\leq p\leq x).
\end{equation}
Nous verrons que le cas $\nu=\Omega$, seul considéré dans \cite{bib:pollack}, est de difficulté technique très supérieure au cas $\nu=\omega$. Ce phénomène remarquable tient à l'influence des petits facteurs sur la taille du facteur premier médian. Leur présence ne perturbe significativement la croissance de la suite des facteurs premiers que s'ils sont comptés avec multiplicité. Ainsi, le comportement de $M_\omega(x,p)$ s'avère régulier dans toute la zone $\log_2p\asymp \log_2x$, mais celui de $M_\Omega(x,p)$ présente un changement de phase autour de la valeur critique $\beta_p:=(\log_2 p)/\log_2 x=\tfrac15$. \par
Alors que le travail \cite{bib:pollack} ne fournit pas d'estimation pour $M_\Omega(x,p)$ lorsque $|\beta_p-\frac15|\leqslant \varepsilon$, nous nous sommes particulièrement attachés à décrire précisément la transition de phase. Hors de  cette zone critique, nous proposons une amélioration significative des termes d'erreurs relatifs obtenus dans~\cite{bib:pollack}. Les résultats relatifs au  cas $\nu=\omega$ sont  par ailleurs nouveaux. 
 
\par
Mentionnons enfin que McNew, Pollack et Singha Roy \cite{bib:pollack} considèrent plus généralement le facteur premier $\alpha$-positionné défini par $p_{\Omega}^{(\alpha)}(n):=Q_{\lceil\alpha\Omega(n)\rceil}\quad (0<\alpha<1)$. Nos méthodes sont adaptables sans difficulté à ce cas. Nous avons préféré restreindre l'étude au cas $\alpha=\tfrac12$ afin de préserver la clarté d'exposition et éviter la prolifération de détails techniques sans intérêt théorique.
\subsection{Notations}
Dans toute la suite, les lettres $p$ et $q$ désignent des nombres premiers. Notons $\kappa$ la constante de Mertens et $\gamma$ la constante d'Euler-Mascheroni. Définissons alors \begin{equation}\label{def:Hnu;H0}
	\begin{aligned}
		\HH_\nu(z)&:=\begin{cases}
			\displaystyle\e^{\kappa}\prod_{q}\Big(1+\frac{z}{q-1}\Big)\e^{-z/q}\quad(z\in\C)\qquad&\textnormal{si }\nu=\omega,\\
			\displaystyle\e^{\gamma z}\prod_{q}\Big(1-\frac{1}{q}\Big)^z\Big(1-\frac{z}{q}\Big)^{-1}\quad(\Re z<2)&\textnormal{si }\nu=\Omega,
		\end{cases}\\
		\HH_\Omega^*(z)&:=\big(1-\tfrac12z\big)\HH_\Omega(z)\quad (\Re z<3),\quad \gh:=\HH_\Omega^*(2)=\tfrac14\e^{2\gamma}\prod_{q\geq 3}\Big(1+\frac{1}{q(q-2)}\Big)\approx 1{,}201304.
	\end{aligned}
\end{equation}
Notons $a_\nu:=0$ si $\nu=\omega$, $a_\nu:=\tfrac15$ si $\nu=\Omega$, définissons
\begin{equation}\label{def:fnu;rho;rhonu}
	\begin{gathered}
		f_\nu(z):=\frac{\HH_\nu(z)\e^{-\gamma/z}}{\Gamma(1+1/z)}\quad (0<\Re z<2),\qquad \gc:=-\tfrac34\res(f_\Omega;2)=\frac{3\gh\e^{-\gamma/2}}{\sqrt\pi},\\
		\varrho_\nu(v):=\frac{(1+w)f_\nu(w)}{2w\sqrt{\pi vw}}\quad \big(a_\nu<v<1,\,w:=\sqrt{(1-v)/v}\big),
	\end{gathered}
\end{equation}
et posons
\begin{equation}\label{def:betap;betapstar;varepsilonx;deltap;taup}
	\beta_p=\beta_p(x):=\frac{\log_2 p}{\log_2 x},\qquad\varepsilon_x:=\frac1{\log_2 x},\qquad\delta_p:=\beta_p-\tfrac15\quad(3\leq p\leq x).
\end{equation}
Notons d'emblée que 
\begin{equation}
	p=\e^{(\log x)^{\beta_p}},\quad-\tfrac15\leqslant \delta_p\leqslant \tfrac45\qquad (3\leqslant p\leqslant x).
\end{equation}
Désignons par $\PP$ l'ensemble des nombres premiers et considérons le domaine 
\begin{equation}\label{def:domaine:D}
	\cD_\varepsilon=\big\{(x,p)\in[3,\infty[\times\PP:\e^{(\log x)^\varepsilon}\leqslant p\leqslant \e^{(\log x)^{1-\varepsilon}}\big\}\quad\big(0<\varepsilon<\tfrac12\big).
\end{equation}\par 
Posons, pour $3\leq p\leq x$,
\begin{equation}\label{def:eq:gR;gammanu;Psi}
	\begin{gathered}
		\gR_1:=\varepsilon_x+\frac{\sqrt{\varepsilon_x}}{|\delta_p|(\log p)^{\delta_p^2/4}},\quad\gR_2:=\sqrt{\varepsilon_x}+\frac{|\delta_p|^3}{\varepsilon_x},\quad \gR_3:=\frac{\varepsilon_x}{\delta_p^2},\\
		\gamma_\nu(v):=\begin{cases}
			\tfrac12(1-3v)\qquad&\textnormal{si }0< v\leq a_\nu,\\
			1-2\sqrt{v(1-v)}&\textnormal{si } a_\nu \leq v<1,
		\end{cases}\qquad \Psi(v):=\frac{\e^{\max(v,0)^2/2}}{\sqrt{2\pi}}\int_{v}^{\infty}\e^{-t^2/2}\d t.
	\end{gathered}
\end{equation}
Nous avons ainsi
\[\Psi(v)=\frac1{\sqrt{2\pi}v}+O\Big(\frac{1}{v^3}\Big)\quad(v\to\infty),\quad \Psi(v)=\tfrac12+O(v)\quad(v\to0).\]
\subsection{Résultat principal}
Notre objectif est de démontrer le résultat suivant.

\begin{theorem}\label{th:eq:Mp}
	Soit $0<\varepsilon<\tfrac12$. Sous la condition $(x,p)\in\cD_\varepsilon$, nous avons uniformément
	\begin{gather}
		\label{eq:Mp:pomega}M_\omega(x,p)=\frac{\{1+O(\varepsilon_x)\}\varrho_\omega(\beta_p)x}{p(\log x)^{\gamma_\omega(\beta_p)}\sqrt{\log_2 x}},\\[10pt]
		\label{eq:Mp:Gomega}M_{\Omega}(x,p)=\begin{cases}
			\displaystyle\frac{\{1+O(\gR_1)\}\gc x}{p(\log x)^{\gamma_\Omega(\beta_p)}}=\frac{\{1+O(\gR_1)\}\gc x(\log p)^{3/2}}{p\sqrt{\log x}}&\textnormal{si }\delta_p\leq-\sqrt{10\varepsilon_x\log_3x},\\[15pt]
			\displaystyle\frac{\{1+O(\gR_2)\}\gc x\Psi\big(\frac14\delta_p\sqrt{125/\varepsilon_x}\big)}{p(\log x)^{\gamma_\Omega(\beta_p)}}&\textnormal{si }-\sqrt{10\varepsilon_x\log_3x}\leq\delta_p\leq\varepsilon_x^{2/5},\\[15pt]
			\displaystyle\frac{\{1+O(\gR_3)\}\varrho_\Omega(\beta_p)x}{p(\log x)^{\gamma_\Omega(\beta_p)}\sqrt{\log_2 x}}\qquad&\textnormal{si }\delta_p\geq\varepsilon_x^{2/5}.
		\end{cases}
	\end{gather}
\end{theorem}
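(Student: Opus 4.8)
The plan is to go through an exact analytic formula for $M_\nu(x,p)$ followed by a saddle-point analysis in which, for $\nu=\Omega$, a pole created by the prime $2$ drives the transition at $\beta_p=\tfrac15$. First, a combinatorial reduction: every $n$ with $\p(n)=p$ factors uniquely as $n=a\,p^{k}\,b$ with $P^{+}(a)<p\le P^{-}(b)$ and $k\ge1$, and then $p$ occupies positions $\nu(a)+1,\dots,\nu(a)+k$ in the increasing list of prime factors of $n$ (with $k=1$ and position $\omega(a)+1$ when $\nu=\omega$), so $\p(n)=p$ means $\lceil\nu(n)/2\rceil\in\{\nu(a)+1,\dots,\nu(a)+k\}$ — for $\nu=\omega$, $k=1$, simply $\omega(b)\in\{\omega(a),\omega(a)+1\}$. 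The terms with $k\ge2$ contribute $\ll x/p^{2}$, a relative error $\ll\e^{-(\log x)^{\varepsilon}}$ absorbed in each of $\gR_{1},\gR_{2},\gR_{3}$, so we keep $k=1$. This already shows why the two cases differ: forcing $p$ to be the median when $\beta_p<\tfrac12$ requires the $p$-smooth part $a$ to carry unusually many prime factors, and with multiplicity the cheapest way is to pile up powers of the smallest primes — chiefly of $2$ — an effect appearing below as a pole at $z=2$.

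Next, an exact double integral. Put $\mathcal F_p(s,z):=\sum_{P^{+}(a)<p}z^{\nu(a)}a^{-s}$ and $\mathcal G_p(s,z):=\sum_{P^{-}(b)>p}z^{\nu(b)}b^{-s}$, so $\mathcal F_p=\prod_{q<p}(1+z/(q^{s}-1))$, $\mathcal G_p=\prod_{q>p}(1+z/(q^{s}-1))$ when $\nu=\omega$, with $\prod(1-zq^{-s})^{-1}$ in place of these when $\nu=\Omega$. Averaging $z^{\nu(b)-\nu(a)}$ over a circle $|z|=\gr$ isolates the near-diagonal condition while Perron's formula in $s$ handles $apb\le x$, so that, up to the $k\ge2$ terms and the usual Perron truncation error (and with $\gr<2^{\sigma}$ for convergence of $\mathcal F_p$ when $\nu=\Omega$),
\[
M_\nu(x,p)=\frac1{(2\pi i)^{2}}\oint_{|z|=\gr}\int_{(\sigma)}\mathcal F_p(s,z)\,\mathcal G_p\bigl(s,\tfrac1z\bigr)\,(1+z)\,\frac{(x/p)^{s}}{s}\,\d s\,\frac{\d z}{z}.
\]
Writing $\mathcal F_p(s,z)=\zeta_p(s)^{z}A_\nu(s,z)$ and $\mathcal G_p(s,\tfrac1z)=(\zeta(s)/\zeta_p(s))^{1/z}B_\nu(s,z)$ with $\zeta_p(s):=\prod_{q<p}(1-q^{-s})^{-1}$ and $A_\nu,B_\nu$ holomorphic and of controlled size in a classical zero-free region, the inner integrand carries a branch point $(s-1)^{-1/z}$ at $s=1$. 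A Selberg--Delange treatment of the $s$-integral — Hankel contour around $s=1$ plus a zero-free remainder, exploiting $\zeta_p(s)\asymp\log p$ throughout the relevant range $|s-1|\ll1/\log x$ — then collapses it to a single $z$-integral whose integrand, after packaging the Euler products and Gamma factors into $f_\nu$, is
\[
\frac{x}{p}\,(1+z)\,f_\nu(z)\,\exp\Bigl(\tfrac1{\varepsilon_x}\phi_{\beta_p}(z)\Bigr)\frac{\d z}{z},\qquad \phi_\beta(z):=\beta z+\frac{1-\beta}{z}-1 .
\]

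Finally, the saddle-point analysis. Since $\phi_\beta'(z)=\beta-(1-\beta)z^{-2}$, the saddle sits at $z=w=\sqrt{(1-\beta_p)/\beta_p}$, with $\phi_{\beta_p}(w)=2\sqrt{\beta_p(1-\beta_p)}-1=-\gamma_\nu(\beta_p)$ for $\beta_p\ge a_\nu$. For $\nu=\omega$ the function $\mathcal F_p(s,z)$ is entire in $z$: pushing the contour to $|z|=w$ and performing the Gaussian integral, the prefactor $(1+z)f_\omega(z)/z$ frozen at $z=w$ reproduces $\varrho_\omega(\beta_p)$, giving \eqref{eq:Mp:pomega} with the standard second-order remainder $O(\varepsilon_x)$. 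For $\nu=\Omega$, $f_\Omega$ — equivalently $\mathcal F_p(s,z)$ — has a pole at $z=2$ coming from the factor $q=2$. If $\delta_p>0$ then $w<2$, no pole is crossed, and the same evaluation gives the third line of \eqref{eq:Mp:Gomega}, with $\gR_3=\varepsilon_x/\delta_p^{2}$ measuring the distortion of the saddle by the pole at distance $\asymp\delta_p$, admissible once $\delta_p\ge\varepsilon_x^{2/5}$. If $\delta_p<0$ then $w>2$ and deforming to the saddle crosses the pole; the residue at $z=2$, proportional to $\res(f_\Omega;2)$, contributes exactly $\gc\,x\,p^{-1}\e^{\phi_{\beta_p}(2)/\varepsilon_x}=\gc\,x\,p^{-1}(\log x)^{-\gamma_\Omega(\beta_p)}$ with $\gamma_\Omega(\beta_p)=\tfrac12(1-3\beta_p)$, and this dominates the $\sqrt{\log_2 x}$-damped saddle term once $\delta_p\le-\sqrt{10\varepsilon_x\log_3x}$; comparing the two sizes (the saddle carrying relative weight $\ll\varepsilon_x^{1/2}|\delta_p|^{-1}(\log p)^{-\delta_p^{2}/4}$, since the saddle exponent exceeds the pole exponent by $\gg\delta_p^{2}$) yields $\gR_1$. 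The delicate zone is $|\delta_p|$ small, where the saddle $z=w$ and the pole $z=2$ coalesce and the two contributions are of the same order: one then completes the square in $\phi_{\beta_p}$ and evaluates a confluent integral of the type $\frac1{2\pi i}\int\e^{\lambda(z-w)^{2}/2}(z-2)^{-1}g(z)\,\d z$ uniformly through the coalescence, which is precisely what produces $\Psi$, with scaled variable $\tfrac14\delta_p\sqrt{125/\varepsilon_x}$ reflecting the separation $w-2\asymp\delta_p$ measured against the saddle width $\asymp\sqrt{\varepsilon_x}$, the constant $\tfrac14\sqrt{125}$ coming from $\phi_{\beta_p}''$ and $dw/d\beta_p$ at $\beta_p=\tfrac15$. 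Controlling the cubic and higher Taylor terms of $\phi_{\beta_p}$ at $z=2$ gives the $|\delta_p|^{3}/\varepsilon_x$ in $\gR_2$, the residual curvature the $\sqrt{\varepsilon_x}$; one also checks that this $\Psi$-formula matches the pure-residue formula as $\delta_p\to-\infty$ (then $\Psi\to1$) and the pure-saddle formula as $\delta_p$ grows (then $\varrho_\Omega(\beta_p)$ behaves like a residue as $\beta_p\downarrow\tfrac15$ while $\Psi(v)\sim(\sqrt{2\pi}\,v)^{-1}$).

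I expect the main obstacle to be exactly this last step: producing a single asymptotic formula valid \emph{through} the confluence of the saddle point and the prime-$2$ pole for $\nu=\Omega$, with explicit remainders on either side — the uniform confluent-integral estimate that delivers the $\Psi$ term and glues it continuously onto the pure-residue régime ($\delta_p$ very negative) and the pure-saddle régime ($\delta_p>0$). This is where \cite{bib:pollack} leaves a gap, and where all the care behind $\gR_1,\gR_2,\gR_3$ is concentrated; everything else is Selberg--Delange bookkeeping made uniform over $(x,p)\in\cD_\varepsilon$ by standard contour shifts together with the bound $\zeta_p(s)\asymp\log p$.
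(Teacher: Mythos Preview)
Your plan is sound and should work, but it takes a genuinely different route from the paper. The paper does not set up a double contour integral in $(s,z)$; instead it writes $n=apb$, reduces $M_\nu(x,p)$ to a single sum over $k$ of the shape $\sum_k \lambda_\nu(k,p)\,\Phi_{\nu,k}(x/ap,p)$ (Propositions~2.4 and~3.7), and then evaluates that sum by a discrete Laplace method in $k$. The core of the argument is a \emph{uniform} estimate for $\lambda_\Omega(k,y)=\sum_{P^+(n)\le y,\,\Omega(n)=k}1/n$ (Theorem~3.2), obtained by a single Cauchy integral in $z$; this estimate already encodes the change of régime at $k\approx2\log_2y$ and carries, in the transitional range, a factor $\Phi(\Delta_{k,y})$. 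Summing this over $k$ produces the real integral $\int\Phi(t/\sqrt a)\e^{bt-t^2/2a}\,\d t$ (Lemma~5.1), and it is this integral --- not a complex confluent saddle/pole integral --- that yields $\Psi$ and the middle line of \eqref{eq:Mp:Gomega}. So in the paper the transition mechanism lives inside the $\lambda_\Omega$-estimate, not inside a contour deformation.

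The two approaches are dual: your $z$ is the continuous counterpart of the paper's $\gr_{k,p}=k/\log_2p$, your saddle $z=w$ corresponds to the peak $k=w_{p,1}^*$, and your pole at $z=2$ corresponds to the regime $k>2\log_2p$ where $\lambda_\Omega(k,p)\sim\gh(\log p)^2/2^k$. What your route buys is a very clean conceptual picture --- the phase transition really is a saddle/pole coalescence --- and potentially a shorter proof once a uniform-in-$z$ Selberg--Delange and a Bleistein-type confluent estimate are in place. What the paper's route buys is that the Selberg--Delange input (Theorem~3.1) is only ever needed at a single real parameter $r_{x,k,p}$, so the delicate uniformity in $z$ near the pole is never faced directly; and Theorem~3.2 stands on its own as a sharpening of Lichtman's formula for $\lambda_\Omega(k,y)$, a byproduct your sketch does not obviously deliver. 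The place where your ``standard contour shifts'' would need real work is precisely the uniform-in-$z$ Selberg--Delange as $z\to2$: that is where the exact shapes of $\gR_1,\gR_2,\gR_3$ have to be recovered, and it is not automatic.
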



\noindent{\it Remarques.} (i) La fonction $\gamma_\Omega$ est de classe $\sC^1$ sur $]0,1[$.\par\medskip
(ii) Il est à noter que $M_\Omega(x,p)>M_\omega(x,p)$ pour $\beta_p\leqslant \frac15$. Cela reflète la disparité de l'influence des petits facteurs premiers mentionnée au paragraphe \ref{dhp}.\par 
\medskip
(iii) Dans le domaine $\delta_p\leq -\sqrt{10\varepsilon_x\log_3x}$, nous avons $\gR_1\ll\sqrt{\varepsilon_x}$.
\par\medskip
(iv) À la frontière $\delta_p=-\sqrt{10\varepsilon_x\log_3x}$, nous avons
\[\gR_2\asymp\big(\log_3x\big)^{3/2}\sqrt{\varepsilon_x}\asymp\gR_1\big(\log_3x\big)^2.\]\par
(v) Au point critique $\beta_p=\tfrac15$, nous avons
\begin{equation*}
	M_\Omega\big(x,p\big)=\frac{\{1+O(\sqrt{\varepsilon_x})\}\gc x}{2p\log p}\quad \big({p\geq 3,\,x=\e^{(\log p)^5}}\big).
\end{equation*}
\par
(vi) Dès que $\delta_p\gg 1$, nous avons $\gR_1\asymp\gR_3\asymp\varepsilon_x$, améliorant ainsi le résultat principal de \cite{bib:pollack}. 
\par
(vii) Un ingrédient essentiel de la preuve du Théorème \ref{th:eq:Mp} est l'estimation uniforme de la quantité
\[\sum_{\substack{P^+(n)\leqslant y\\ \Omega(n)=k}}\frac1n\quad(y\geq 3,\, k\geq 1),\]
obtenue en \eqref{eq:lambdaOmega:unif} et qui
généralise et précise un résultat récent de Lichtman \cite{lichtman}.\par
\par 
\smallskip
 Nous démontrons la formule \eqref{eq:Mp:Gomega} en trois étapes. Au Théorème \ref{th:eq:Mp:critZone:out}, nous estimons $M_\Omega(x,p)$ pour des valeurs de $\beta_p$ satisfaisant à la condition $|\delta_p|\geq\sqrt{\varepsilon_x}$. Le Théorème \ref{th:eq:Mp:critZone:in} fournit une estimation analogue dans le domaine $|\delta_p|\leq\varepsilon_x^{1/3}$. Enfin, à la section \ref{sec:recollement}, nous optimisons le terme d'erreur dans l'intersection des domaines de validité des formules \eqref{eq:Mp:critZone:out:Gomega} et \eqref{eq:Mp:critZone:in}.



\section{Domaine de contribution principale}
Posons
\[\chi_{\nu,p}(n):=\begin{cases}
	1\quad&\textnormal{si }\p(n)=p,\\
	0&\textnormal{sinon},
\end{cases}\quad(3\leq p\leq n\leq x),\]
de sorte que
\begin{equation}\label{eq:Mnup:defbis}
	M_\nu(x,p)=\sum_{n\leq x}\chi_{\nu,p}(n).
\end{equation}
Ce paragraphe est consacré à mettre en évidence une plage de valeurs de $\nu(n)$ dominant la som\-me~\eqref{eq:Mnup:defbis}. Nous aurons usage des deux lemmes techniques suivants.


\begin{lemmanodot}\label{l:majo:bigOmega}{\bf \cite[lemme 2.1]{bib:papier1}.}
L'estimation
	\begin{equation}\label{eq:majo:bigOmega}
		\sum_{\substack{n\leq x\\\Omega(n)\geq k}}1\ll\frac{kx\log x}{2^k},
	\end{equation}
	a lieu uniformément pour $k\geq 1$, $x\geq 2$.
\end{lemmanodot}


Posons
\begin{equation}\label{def:Q}
	Q(v):=v\log v-v+1\quad(0<v<1).
\end{equation}


\begin{lemmanodot}\label{l:norton}{\bf \cite[lemmes 4.5 et 4.7]{bib:norton}.}
	Soient $0<a<1<b$. Pour $v\geqslant 1$, nous avons
	\begin{align}
		\sum_{n\leq av}\frac{v^n}{n!}&<\frac{\e^{v(1-Q(a))}}{(1-a)\sqrt{av}},\label{eq:majo:norton:inf}\\
		\sum_{n\geq bv}\frac{v^n}{n!}&<\frac{\sqrt b\e^{v(1-Q(b))}}{(b-1)\sqrt{2\pi v}}.\label{eq:majo:norton:sup}
	\end{align}
\end{lemmanodot}


Soit $W_{-1}:[-1/\e,0[\to ]-\infty,-1]$ la branche négative de la fonction de Lambert, réciproque de la fonction $z\mapsto z\e^z$, de sorte que $w(t):=\e^{1+W_{-1}(t)}$ est l'unique solution dans $]0,1[$ de $w(\log w-1)=\e t$ pour $t\in]-1/\e,0[$. Posons alors
\[\kappa_\varepsilon(v):=w\Big(\frac{\varepsilon-2\sqrt{v(1-v)}}{\e}\Big)\quad\big(0<\varepsilon<\tfrac12,\,\varepsilon\leq v \leq1-\varepsilon\big).\]
Ainsi $\kappa_\varepsilon(\beta_p)$ est l'unique solution dans $]0,1[$ de l'équation $v(1-\log v)=2\sqrt{\beta_p(1-\beta_p)}-\varepsilon$.
La fonction $\kappa_\varepsilon$ est concave sur $[\varepsilon,1-\varepsilon]$ et atteint son maximum $w(\{\varepsilon-1\}/\e)<1$ en $v=\tfrac12$.
\par 
Définissons encore, pour $3\leq p\leq x$, $0<\varepsilon<\tfrac12$,
\[\A_{\nu,\varepsilon}(p):=\Big\{n\leqslant x:\kappa_\varepsilon(\beta_p)<\frac{\nu(n)}{\log_2x}<\frac2{\log 2}\Big\}.\] 


\begin{lemma}\label{l:majo:Mp:smallbignu}
	Soit $0<\varepsilon<\tfrac12$. Sous les conditions  $x\geqslant 3$, $\varepsilon\leq\beta_p\leq1-\varepsilon$, nous avons uniformément
	\begin{equation}\label{eq:majo:Mp:smallbignu}
		\sum_{n\in[1,x)\smallsetminus\A_{\nu,\varepsilon}(p)}\chi_{\nu,p}(n)\ll \frac{x}{p(\log x)^{\gamma_\nu(\beta_p)+\varepsilon}}.
	\end{equation}
\end{lemma}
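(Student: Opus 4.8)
\emph{Outline of the proof.} The plan is to split $[1,x)\sset\A_{\nu,\varepsilon}(p)$ according to the two thresholds defining $\A_{\nu,\varepsilon}(p)$, and to bound separately the contributions of the $n<x$ with $\nu(n)\ge\tfrac{2}{\log 2}\log_2 x$ and with $\nu(n)\le\kappa_\varepsilon(\beta_p)\log_2 x$. The first is the easy one: since $\chi_{\nu,p}(n)=1$ forces $p\mid n$, writing $n=mp$ and using $\Omega(m)\ge\nu(m)-1\ge\nu(n)-2$, one applies Lemma~\ref{l:majo:bigOmega} to $\sum_{m<x/p,\,\Omega(m)\ge k}1$ with $k=\lfloor\tfrac2{\log 2}\log_2 x-1\rfloor$, so that $2^{k}\gg(\log x)^2$; this yields $\ll\tfrac{x\log_2 x}{p\log x}$, which is $\ll x/\bigl(p(\log x)^{\gamma_\nu(\beta_p)+\varepsilon}\bigr)$ because $\gamma_\nu(\beta_p)+\varepsilon$ stays bounded away from $1$ on $\varepsilon\le\beta_p\le1-\varepsilon$ (one has $\gamma_\nu(\beta_p)\le1-2\sqrt{\varepsilon(1-\varepsilon)}$ when $\beta_p\ge a_\nu$, and $\gamma_\Omega(\beta_p)+\varepsilon\le\tfrac12-\tfrac\varepsilon2$ when $\beta_p<a_\Omega$).

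For the second range the crux is to estimate $N_k:=\#\{n<x:\p(n)=p,\ \nu(n)=k\}$ uniformly for $1\le k\le\kappa_\varepsilon(\beta_p)\log_2 x$. I would factor $n=u\,p^{e}v$ with $P^+(u)<p$, $p^{e}\,\|\,n$ and $P^-(v)>p$ ($P^{\pm}$ denoting the largest, smallest prime factor); the condition $\p(n)=p$ then amounts, for $\nu=\omega$, to $\omega(u)=\lceil k/2\rceil-1$ and $\omega(v)=\lfloor k/2\rfloor$, while for $\nu=\Omega$ it forces $\Omega(u)=j\le\lceil k/2\rceil-1$, $\Omega(v)=\ell\le\lfloor k/2\rfloor$ and $e=k-j-\ell$, where the weight $p^{-e}\le p^{-1}$ ($p\ge\e^{(\log x)^{\varepsilon}}$) together with $\Lambda^+_\Omega(p,i+1)\ge\tfrac12\Lambda^+_\Omega(p,i)$ (obtained from $u\mapsto 2u$) confines the bulk to $e=1$, $(j,\ell)=(\lceil k/2\rceil-1,\lfloor k/2\rfloor)$. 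Estimating $\#\{v\le Y:P^-(v)>p,\ \nu(v)=\ell\}\ll\frac{Y}{\log Y}\cdot\frac{(\log_2 Y-\log_2 p+C)^{\ell-1}}{(\ell-1)!}$ for $Y:=x/(up^{e})$ by the standard uniform count of integers with a prescribed number of prime factors all exceeding $p$ (with $\log Y\asymp\log x$, the rare $n$ with $p^{e}\ge x^{1/3}$ being trivially negligible), and $\sum_{u}1/u$ over $P^+(u)<p$, $\nu(u)=j$ by Mertens' formula and Rankin's method (for $\nu=\Omega$, refined near $j\asymp\log_2 p$ by the uniform estimate alluded to in Remark~(vii)), one reaches, with $M:=(1-\beta_p)\log_2 x+C$,
\[
N_k\ \ll_\varepsilon\ \frac{x}{p\log x}\,\Lambda^+_\nu\!\bigl(p,\lceil k/2\rceil-1\bigr)\,\frac{M^{\lfloor k/2\rfloor-1}}{(\lfloor k/2\rfloor-1)!},
\]
where $\Lambda^+_\nu(p,j)\ll(\log_2 p+C)^{j}/j!$ for $\nu=\omega$, and for $\nu=\Omega$ the same bound holds while $j$ is not much larger than $\log_2 p$ (Rankin with parameter $j/\log_2 p$) but $\Lambda^+_\Omega(p,j)\ll j(\log p)2^{-j}$ once $j$ is appreciably larger (Rankin with parameter near $2$).

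It remains to sum over $k\le\kappa_\varepsilon(\beta_p)\log_2 x$. The decisive elementary fact is $\kappa_\varepsilon(\beta_p)<2\sqrt{\beta_p(1-\beta_p)}$, immediate from $\kappa_\varepsilon(\beta_p)\bigl(1-\log\kappa_\varepsilon(\beta_p)\bigr)=2\sqrt{\beta_p(1-\beta_p)}-\varepsilon$ and $1-\log v>1$ for $v<1$: consequently the summand above is increasing in $k$ over the whole range, so Lemma~\ref{l:norton} shows the sum to be $\ll_\varepsilon$ its last term. Evaluating that term at $k\approx\kappa_\varepsilon(\beta_p)\log_2 x$ by Stirling's formula and inserting the defining relation of $\kappa_\varepsilon$ reproduces the bound $x/\bigl(p(\log x)^{\gamma_\nu(\beta_p)+\varepsilon}\bigr)$, in fact with the saving $(\log x)^{\kappa_\varepsilon(\beta_p)\log(2\sqrt{\beta_p(1-\beta_p)})}\le1$ to spare (enough to absorb the $(\log_2 x)^{O(1)}$ losses incurred for $\nu=\Omega$); in the range $\beta_p<a_\Omega=\tfrac15$, where $\Lambda^+_\Omega$ is governed by powers of $2$, the exponent one obtains is $1-2\sqrt{\beta_p(1-\beta_p)}$, which is $\le\gamma_\Omega(\beta_p)=\tfrac{1-3\beta_p}2$ precisely because $(1-5\beta_p)^2\ge0$. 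The main obstacle is the uniformity, over the entire range of $\ell$, of the count of integers with $\ell$ prime factors all larger than $p$, together with the matching near $j\asymp\log_2 p$ of the two regimes of $\Lambda^+_\Omega$ — the source of the substantially greater difficulty of the case $\nu=\Omega$, that of $\nu=\omega$ being cleaner since $\Omega(v)=\omega(v)$ cannot be inflated by small prime factors.
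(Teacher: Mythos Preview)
Your treatment of the large-$\nu(n)$ tail is the same as the paper's. For the small-$\nu(n)$ tail, however, you work much harder than necessary, and the elaborate analysis you outline (factoring $n=up^ev$, tracking the two regimes of $\Lambda^+_\Omega$, invoking the uniform estimate from Remark~(vii)) is not needed here.

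The paper's argument is a two-line reduction: from $\chi_{\nu,p}(n)=1$ one keeps only $p\mid n$, writes $n=pd$, and observes $\omega(d)\le\omega(n)\le\nu(n)$; hence
\[
\sum_{\substack{n\le x\\ \nu(n)\le\kappa_\varepsilon(\beta_p)\log_2 x}}\chi_{\nu,p}(n)\ \le\ \sum_{\substack{d\le x/p\\ \omega(d)\le\kappa_\varepsilon(\beta_p)\log_2 x}}1.
\]
Then Hardy--Ramanujan plus Norton's inequality~\eqref{eq:majo:norton:inf} give the right-hand side $\ll x/\bigl(p(\log x)^{Q(\kappa_\varepsilon(\beta_p))}\bigr)$, and by the very definition of $\kappa_\varepsilon$ one has $Q(\kappa_\varepsilon(\beta_p))=1-2\sqrt{\beta_p(1-\beta_p)}+\varepsilon\ge\gamma_\nu(\beta_p)+\varepsilon$. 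In particular the cases $\nu=\omega$ and $\nu=\Omega$ are handled \emph{simultaneously} by dropping to $\omega$; the distinction you draw (``the substantially greater difficulty of the case $\nu=\Omega$'') does not arise in this lemma at all.

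Your route could presumably be made to work, but it imports the full median structure and the later estimate~\eqref{eq:lambdaOmega:unif} (which in the paper's ordering is proved only in Section~\ref{sec:etude;lambda}), and it leaves several uniformity claims as assertions. A minor slip: in your last sentence you write that $1-2\sqrt{\beta_p(1-\beta_p)}\le\gamma_\Omega(\beta_p)$ for $\beta_p<\tfrac15$; the inequality goes the other way (as your own computation $(1-5\beta_p)^2\ge0$ shows), and that is precisely what makes the bound admissible.
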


\begin{proof} Une application de \eqref{eq:majo:bigOmega} avec $k:=\lfloor2(\log_2 x)/\log 2\rfloor$ fournit
	\begin{equation}\label{eq:majo:inter:Mp:bignu}
		\sum_{\substack{n\leq x\\ \nu(n)\geq 2(\log_2 x)/\log 2}}\chi_{\nu,p}(n)\leq \sum_{\substack{d\leq x/p\\ \Omega(d)\geq\lfloor 2(\log_2 x)/\log 2\rfloor-1}}1\ll \frac{x\log_2 x}{p\log x}.
	\end{equation}
Cette majoration est compatible avec \eqref{eq:majo:Mp:smallbignu}.	
\par Par ailleurs, d'après l'inégalité de Hardy-Ramanujan nous avons, pour une constante positive $c_0$ convenable,
	\begin{equation}\label{eq:majo:HR}
		\sum_{\substack{n\leq x\\\omega(n)=k}}1\ll\frac{x(\log_2 x+c_0)^{k-1}}{(k-1)!\log x}\quad(x\geq 3,\,k\geqslant 1).
	\end{equation}
	Puisque $\kappa_\varepsilon(\beta_p)\leqslant w((\varepsilon-1)/\e)<1$ pour $0<\varepsilon<\tfrac12$, $\varepsilon\leq\beta_p\leq1-\varepsilon$,  nous obtenons par \eqref{eq:majo:norton:inf}
	\begin{equation}\label{eq:majo:Mp:inter:smallnu:HR}
		\begin{aligned}
			\sum_{\substack{n\leq x\\ \nu(n)\leq \kappa_\varepsilon(\beta_p)\log_2 x}}\chi_{\nu,p}(x)&\leq \sum_{\substack{d\leq x/p\\\omega(d)\leq\kappa_\varepsilon(\beta_p)\log_2 x}}1\ll\frac{x}{p\log x}\sum_{k\leq \kappa_\varepsilon(\beta_p)\log_2 x}\frac{(\log_2 x)^{k-1}}{(k-1)!}\\
			&\ll\frac{x}{p(\log x)^{Q(\kappa_\varepsilon(\beta_p))}},
		\end{aligned}
	\end{equation}
	où $Q$ est la fonction définie en \eqref{def:Q}. Puisque $\kappa_\varepsilon(\beta_p)$ est l'unique solution dans $]0,1[$ de l'équation $v(1-\log v)=2\sqrt{\beta_p(1-\beta_p)}-\varepsilon$, nous obtenons que le membre de gauche de \eqref{eq:majo:Mp:inter:smallnu:HR} est
	\begin{equation}\label{eq:majo:inter:Mp:smallnu}
		\ll\frac{x}{p(\log x)^{1-2\sqrt{\beta_p(1-\beta_p)}+\varepsilon}}.
	\end{equation}
En regroupant les estimations \eqref{eq:majo:inter:Mp:bignu} et \eqref{eq:majo:inter:Mp:smallnu}, nous obtenons l'estimation annoncée puisque $\gamma_\nu(v)\leq 1-2\sqrt{v(1-v)}\ (0<v<1)$.
\end{proof}


Nous scindons l'étude de la somme \eqref{eq:Mnup:defbis} selon la parité de $\nu(n)$. Posons ainsi 
\begin{equation}\label{def:Mnu:odd;even}
	\begin{gathered}
		M_{\nu,\iota}(x,p):=\sum_{\substack{n\leq x\\ \nu(n)\equiv 1(\modulo 2)}}\chi_{\nu,p}(n),\quad M_{\nu,\pi}(x,p):=\sum_{\substack{n\leq x\\ \nu(n)\equiv 0(\modulo 2)}}\chi_{\nu,p}(n)\quad(3\leq p\leq x),\\
		\Phi_{\nu,k}(x,y):=\sum_{\substack{n\leq x\\ P^-(n)>y\\ \nu(n)=k}}1\quad(k\geq 1,\,3\leq y\leq x).
	\end{gathered}
\end{equation}
Dans la suite, nous travaillons essentiellement sur la somme $M_{\nu,\iota}(x,p)$, plus commode pour les calculs, et précisons les résultats analogues concernant la somme complémentaire $M_{\nu,\pi}(x,p)$ lorsque cela est nécessaire. \par
Rappelons la définition de $\cD_\varepsilon$ en \eqref{def:domaine:D} et posons, pour $0<\varepsilon<\tfrac12$, 
\begin{equation}\label{def:Kxp;MnupIstar}
	\begin{gathered}
		\JJ(x,p):=\big[\tfrac12\kappa_\varepsilon(\beta_p)\log_2 x-1,\tfrac1{\log 2}\log_2x\big]\cap\R_+^*\quad\big((x,p)\in\cD_\varepsilon\big),\\
		\noalign{\vskip-4mm}\\
		M_{\nu,\iota}^*(x,p)=M_{\nu,\varepsilon,\iota}^*(x,p):=\sum_{k\in \JJ(x,p)}\sum_{\substack{a\leq x/p\\ P^+(a)< p\\\nu(a)=k}}\Phi_{\nu,k}\Big(\frac{x}{ap},p\Big).
	\end{gathered}
\end{equation}


\begin{property}\label{prop:eq:Mp:Mpstar}
	Soit $0<\varepsilon<\tfrac12$. Nous avons l'estimation
	\begin{equation}\label{eq:Mp:Mpstar}
		M_{\nu,\iota}(x,p)=M_{\nu,\iota}^*(x,p)+O\Bigg(\frac{x}{p(\log x)^{\gamma_\nu(\beta_p)+\varepsilon}}\Bigg)\quad\big((x,p)\in\cD_\varepsilon\big).
	\end{equation}
\end{property}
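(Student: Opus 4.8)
The plan is to express $M_{\nu,\iota}(x,p)$ as a sum over $n\leqslant x$ with $\p(n)=p$ and $\nu(n)$ odd, then decompose each such $n$ uniquely according to its middle prime factor. Writing $n=apm$ where $a$ is built from primes $<p$, the factor $p$ is the middle prime, and $m$ is built from primes $>p$ (with the convention governing equality handled by the position $\lceil\nu(n)/2\rceil$), the condition $\p(n)=p$ forces $\nu(a)=k-1$ (resp. $k$) and $\nu(m)$ to lie in a matching range, where $k$ denotes the index so that $\nu(n)=2k-1$. After reorganising, the contribution of a given value of $k$ is exactly an inner sum $\sum_{a}\Phi_{\nu,k}(x/(ap),p)$ with $P^{+}(a)<p$, $\nu(a)=k$, $a\leqslant x/p$. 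This is precisely the shape of $M_{\nu,\iota}^{*}(x,p)$, except that in the definition of $M_{\nu,\iota}^{*}$ the outer index $k$ is restricted to the window $\JJ(x,p)=[\tfrac12\kappa_\varepsilon(\beta_p)\log_2 x-1,\tfrac1{\log 2}\log_2 x]$, whereas a priori $k$ ranges over all positive integers. Hence
\[
M_{\nu,\iota}(x,p)-M_{\nu,\iota}^{*}(x,p)=\sum_{\substack{n\leqslant x,\ \p(n)=p\\ \nu(n)\text{ odd}\\ \nu(n)\notin[\kappa_\varepsilon(\beta_p)\log_2 x-2,\,(2/\log 2)\log_2 x]}}1,
\]
up to harmless boundary adjustments of size $O(1)$ in the index $k$.

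Next I would bound this discrepancy. The key observation is that $\nu(n)$ odd and $\nu(n)$ outside the window forces $\nu(n)$ to be either $\leqslant\kappa_\varepsilon(\beta_p)\log_2 x$ (small) or $\geqslant (2/\log 2)\log_2 x$ (large); in both cases $n\notin\A_{\nu,\varepsilon}(p)$, so the whole complementary sum is dominated by $\sum_{n\in[1,x)\setminus\A_{\nu,\varepsilon}(p)}\chi_{\nu,p}(n)$. One then applies Lemma~\ref{l:majo:Mp:smallbignu} directly, which gives exactly the bound $\ll x/\{p(\log x)^{\gamma_\nu(\beta_p)+\varepsilon}\}$ claimed in \eqref{eq:Mp:Mpstar}. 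The only subtlety is matching the index window: in $\JJ(x,p)$ the lower endpoint is $\tfrac12\kappa_\varepsilon(\beta_p)\log_2 x-1$ in terms of $k$, which translates (via $\nu(n)=2k-1$) to $\nu(n)\gtrsim\kappa_\varepsilon(\beta_p)\log_2 x-2$; and $\kappa_\varepsilon$ was defined so that $\A_{\nu,\varepsilon}(p)$ is precisely the set where $\nu(n)/\log_2 x$ exceeds $\kappa_\varepsilon(\beta_p)$, so the two thresholds agree up to the additive constants, which cost only $O(1)$ shifts in $k$ and hence a bounded multiplicative factor in the Hardy--Ramanujan / Norton estimates. For the even case $M_{\nu,\pi}$ the same argument applies verbatim with $\nu(n)=2k$ in place of $2k-1$.

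The main obstacle is bookkeeping rather than analysis: one must verify carefully that the unique decomposition $n=apm$ with $P^{+}(a)<p<P^{-}(m)$ correctly encodes ``$\p(n)=p$'', in particular that primes equal to $p$ are handled consistently with the ceiling in the definition of $\p$, and that the resulting constraint on $\nu(a)$ and $\nu(m)$ is exactly $\nu(a)=k$, $\nu(m)\in\{k-1,k\}$ (the $\pm1$ slack coming from the position of $p$ itself and whether $p\mid a$ is allowed). Once this is pinned down, everything reduces to Lemma~\ref{l:majo:Mp:smallbignu}. I would also note that the upper truncation at $k\leqslant(\log_2 x)/\log 2$ is the same one used in \eqref{eq:majo:inter:Mp:bignu}, so no new input is needed there. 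The proof is therefore short: decompose, identify the tail as a subsum of $\sum_{n\notin\A_{\nu,\varepsilon}(p)}\chi_{\nu,p}(n)$, and invoke Lemma~\ref{l:majo:Mp:smallbignu}.
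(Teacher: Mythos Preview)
Your proposal is correct and follows essentially the same route as the paper: decompose $n=apb$ with $P^{+}(a)<p<P^{-}(b)$ and $\nu(a)=\nu(b)=k$, recognise that $M_{\nu,\iota}(x,p)$ is the sum over all $k$ while $M_{\nu,\iota}^{*}(x,p)$ restricts $k$ to $\JJ(x,p)$, and bound the discrepancy via Lemma~\ref{l:majo:Mp:smallbignu}. The only wrinkle is your shifting between the conventions $\nu(n)=2k-1$ and $\nu(a)=k$, and the question of whether $p\mid a$ is permitted; the paper handles this by writing $P^{+}(a)\leqslant p$ in the decomposition (so the extra copies of $p$ sit in $a$) and then silently passing to $P^{+}(a)<p$ in $M_{\nu,\iota}^{*}$, the difference being $O(x/p^{2})$ and hence negligible.
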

\begin{proof}
	En représentant tout entier naturel $n\geq 2$ sous la forme $n=apb$ où $p=\p(n)$, $P^+(a):=q_{1,\omega}(a)\leq p$, $P^-(b):=q_{\omega(n),\omega}(a)>p$ avec la convention $P^+(1)=1$, $P^-(1)=\infty$ et $\nu(a)=\nu(b)$, nous obtenons
	\[M_{\nu,\iota}(x,p)=\sum_{k\leq(\log x)/\log 4}\sum_{\substack{a\leq x/p\\ P^+(a)< p\\ \nu(a)=k}}\sum_{\substack{b\leq x/ap\\ P^-(b)> p\\ \nu(b)=k}}1=\sum_{k\leq(\log x)/\log 4}\sum_{\substack{a\leq x/p\\ P^+(a)< p\\ \nu(a)=k}}\Phi_{\nu,k}\Big(\frac{x}{ap},p\Big).\]
	Par ailleurs, une application de la majoration \eqref{eq:majo:Mp:smallbignu} fournit
	\begin{align*}
		M_{\nu,\iota}(x,p)&=\sum_{n\in\A_{\nu,\varepsilon}(p)}\chi_{\nu,p}(n)+\sum_{n\in[1,x)\smallsetminus\A_{\nu,\varepsilon}(p)}\chi_{\nu,p}(n)\\
		&=M_{\nu,\iota}^*(x,p)+O\bigg(\frac{x}{p(\log x)^{1-2\sqrt{\beta_p(1-\beta_p)}+\varepsilon}}\bigg),
	\end{align*}
	ce qui fournit l'estimation annoncée puisque $\gamma_\nu(v)\leq 1-2\sqrt{v(1-v)}\ (0<v<1)$.
\end{proof}



\section{Estimations liées aux lois locales de $\nu(n)$}\label{sec:etude;lambda}

Posons, pour $t\in\R$, $3\leq y\leq x$,
\begin{equation}\label{def:u;ryt;h0}
	u=u_y:=\frac{\log x}{\log y},\qquad r_{x,t,y}:=\frac{t-1}{\log u},\qquad h_0(z):=\frac{\e^{-\gamma z}}{\Gamma(z+1)}\quad(z\in\C).
\end{equation}
L'énoncé suivant fournit une estimation de $\Phi_{\nu,k}(x,y)$ lorsque $k\in \JJ(x,p)$.


\begin{sloppypar}
	\begin{thmnodot}\label{th:eq:Phinuk:unif}{\bf \cite[cor. 3.3]{bib:papier1}.}
		Soit $A>1$. Sous les conditions $3\leq y\leqslant\sqrt x$, ${1/A\leq r_{x,k,p}\leq A}$, nous avons uniformément
		\begin{equation}\label{eq:Phinuk:unif}
			\Phi_{\nu,k}(x,y)=\frac{h_0(r_{x,k,p})x(\log u)^{k-1}}{(k-1)!\log x}\Big\{1+O\Big(\frac{1}{\log u}\Big)\Big\}.
		\end{equation}
	\end{thmnodot}
\end{sloppypar}


Nous aurons ensuite besoin d'estimations précises pour la quantité
\begin{equation}\label{def:lambdanu}
	\lambda_\nu(k,y):=\sum_{\substack{P^+(n)\leqslant y\\ \nu(n)=k}}\frac1n\quad(y\geq 3,\, k\geq 1),
\end{equation}
 dans différents domaines de valeurs de $k$. Notons également, pour $n\in\N^*$,
\begin{equation}\label{def:Pn}
	P_n(Y):=\sum_{0\leq j\leq n}\frac{Y^j}{j!},\quad\Phi(v):=\frac{1}{\sqrt{2\pi}}\int_{-\infty}^v\e^{-t^2/2}\d t\quad (v\in\R).
\end{equation}
Nous aurons besoin du lemme technique suivant fournissant des estimations de $P_n(v)$ en fonction du rapport $n/v$.

\begin{lemma}\label{l:eq:sommePartExp}
	Soient $n\in\N$, $v\geq 1$. Notant $\varrho:=n/v$, nous avons les estimations
	\begin{align}
		\label{eq:sommePartExp:small}P_n(v)&=\frac{v^n}{(1-\varrho)n!}\Big\{1+O\Big(\frac{\varrho}{v\{1-\varrho\}^2}\Big)\Big\}\quad(\varrho<1),\\
		\label{eq:sommePartExp:uniform}P_n(v)&=\e^v\bigg\{\Phi\bigg(\frac{n-v}{\sqrt v}\bigg)+O\Big(\frac1{\sqrt v}\Big)\bigg\}\quad(v\geq 1,\, n\geq 0),\\
		\label{eq:sommePartExp:big}P_n(v)&=\e^v\bigg\{1+O\bigg(\frac{\sqrt\varrho\e^{-vQ(\varrho)}}{\{\varrho-1\}\sqrt{v}}\bigg)\bigg\}\quad(\varrho>1).
	\end{align}
\end{lemma}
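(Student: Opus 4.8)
The plan is to prove the three estimates separately, each resting on a different description of $P_n(v)$: a reindexing of the sum when its last term $v^n/n!$ dominates (case $\varrho<1$), the complementary tail $\e^v-P_n(v)=\sum_{j>n}v^j/j!$ combined with Lemme~\ref{l:norton} (case $\varrho>1$), and the incomplete Gamma representation $P_n(v)=\frac{\e^v}{n!}\int_v^\infty t^n\e^{-t}\d t$ (proved by repeated integration by parts) for the uniform estimate. For \eqref{eq:sommePartExp:small} I would write $P_n(v)=\dfrac{v^n}{n!}\sum_{0\le i\le n}\dfrac{n(n-1)\cdots(n-i+1)}{v^i}=\dfrac{v^n}{n!}\sum_{0\le i\le n}\varrho^i\prod_{0\le\ell<i}\bigl(1-\tfrac{\ell}{n}\bigr)$ and compare the last sum with the geometric series $\sum_{i\ge0}\varrho^i=1/(1-\varrho)$. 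The discrepancy is the sum of the missing tail $\varrho^{n+1}/(1-\varrho)$ and the defect $\sum_{0\le i\le n}\varrho^i\bigl(1-\prod_{0\le\ell<i}(1-\ell/n)\bigr)$. Bounding $1-\prod_{0\le\ell<i}(1-\ell/n)\le\binom{i}{2}/n$ makes the defect $\le\varrho^2/\{n(1-\varrho)^3\}$, which, since $\varrho/n=1/v$, is $O\bigl(\varrho/\{v(1-\varrho)^2\}\bigr)$ relative to $1/(1-\varrho)$; for the tail, the calculus inequality $n(1-\varrho)^2\varrho^{n-1}\le1$ on $]0,1[$ (maximum $4n/(n+1)^2\le1$, attained at $\varrho=(n-1)/(n+1)$) gives $\varrho^{n+1}\le\varrho/\{v(1-\varrho)^2\}$ as well. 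This yields \eqref{eq:sommePartExp:small}.

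For \eqref{eq:sommePartExp:big} we have $0<\e^v-P_n(v)=\sum_{j\ge n+1}v^j/j!$, and \eqref{eq:majo:norton:sup} applied with $b:=(n+1)/v>1$ gives $\e^v-P_n(v)<\sqrt b\,\e^{v(1-Q(b))}/\{(b-1)\sqrt{2\pi v}\}$; since $Q$ increases on $]1,\infty[$ and $b>\varrho$ we may replace $Q(b)$ by $Q(\varrho)$, while $b=\varrho+1/v\le2\varrho$ and $b-1\ge\varrho-1$ yield $\sqrt b/(b-1)\ll\sqrt\varrho/(\varrho-1)$; dividing by $\e^v$ gives \eqref{eq:sommePartExp:big}. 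The same pair of bounds \eqref{eq:majo:norton:inf}--\eqref{eq:majo:norton:sup} (now with $a=\varrho$, resp.\ $b=\varrho$) also disposes of the outer range $|n-v|>v^{3/5}$ of \eqref{eq:sommePartExp:uniform}: there $vQ(\varrho)\gg(n-v)^2/v\gg v^{1/5}$, so $\e^{-v}P_n(v)$ lies within $\ll\e^{-cv^{1/5}}\ll 1/\sqrt v$ of $0$ (if $n<v$) or of $1$ (if $n>v$), and $\Phi\bigl((n-v)/\sqrt v\bigr)$ is equally close to the same value since then $|n-v|/\sqrt v\gg v^{1/10}$.

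The remaining case, \eqref{eq:sommePartExp:uniform} for $|n-v|\le v^{3/5}$, is the only genuine obstacle, as this is precisely where \eqref{eq:sommePartExp:small} and \eqref{eq:sommePartExp:big} degenerate. The shortest route is to observe that $\e^{-v}P_n(v)$ is the value at $n$ of the distribution function of a Poisson law of parameter $v$; decomposing that law as a sum of $\lceil v\rceil$ i.i.d.\ Poisson variables of parameter $v/\lceil v\rceil\in\,]\tfrac12,1]$ (hence with variance bounded away from $0$ and uniformly bounded third absolute moment), the Berry--Esseen theorem yields $\e^{-v}P_n(v)=\Phi\bigl((n-v)/\sqrt v\bigr)+O(1/\sqrt{\lceil v\rceil})=\Phi\bigl((n-v)/\sqrt v\bigr)+O(1/\sqrt v)$ uniformly. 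Alternatively one may argue from first principles: Stirling's formula gives the local approximation $v^m\e^{-v}/m!=\e^{-(m-v)^2/(2v)}\{1+O(v^{-1/5})\}/\sqrt{2\pi v}$ on $|m-v|\le v^{3/5}$; summing over $m\le n$ and comparing with $\tfrac1{\sqrt{2\pi v}}\int_{-\infty}^{n-v}\e^{-u^2/(2v)}\d u=\Phi\bigl((n-v)/\sqrt v\bigr)$ by Euler--Maclaurin (whose cost is $\ll$ the total variation $\asymp1/\sqrt v$ of the Gaussian summand) then gives the claim, provided one expands the exponent $-m\log(m/v)+m-v=-(m-v)^2/(2v)+O(|m-v|^3/v^2)$ far enough to see that the odd-order corrections cancel upon summation while the even-order ones contribute only $O(1/v)$; the continuity correction and the replacement of $\sqrt n$ by $\sqrt v$ are likewise $O(1/\sqrt v)$ on this range and are absorbed.
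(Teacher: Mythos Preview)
Your proof is correct and, for two of the three estimates, genuinely different from the paper's.

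For \eqref{eq:sommePartExp:small}, the paper uses the Cauchy integral representation
$P_n(v)=\dfrac1{2\pi i}\oint_{|z|=\varrho}\dfrac{\e^{zv}\d z}{(1-z)z^{n+1}}$,
expands $1/(1-z)$ to second order around $z=\varrho$, and bounds the remainder by a Laplace-type estimate on the circle. Your reindexing $P_n(v)=\dfrac{v^n}{n!}\sum_{0\le i\le n}\varrho^i\prod_{\ell<i}(1-\ell/n)$ followed by comparison with the geometric series is entirely elementary and arguably more transparent; the calculus inequality $n(1-\varrho)^2\varrho^{n-1}\le1$ that disposes of the tail is a nice touch. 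The contour approach is in principle more flexible (it is reused in the paper for the saddle-point analysis of $\lambda_\Omega$), but for this lemma alone yours is shorter.

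For \eqref{eq:sommePartExp:uniform}, the paper splits into $n\le\tfrac12v$, $\tfrac12v\le n\le2v$, $n\ge2v$ and, in the middle range, quotes an external result (\cite[lemme~2.1]{bib:dekoninck}) rather than proving it. Your Berry--Esseen argument---writing $\e^{-v}P_n(v)$ as the CDF of a Poisson$(v)$ variable decomposed into $\lceil v\rceil$ i.i.d.\ summands of parameter $v/\lceil v\rceil\in\,]\tfrac12,1]$---is self-contained and yields the bound in one stroke, without any case split at all (Berry--Esseen already covers the outer ranges). The alternative Stirling route you sketch is essentially what lies behind the cited lemma.

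For \eqref{eq:sommePartExp:big}, both arguments are the same: apply \eqref{eq:majo:norton:sup} to the tail $\e^v-P_n(v)$. You are slightly more careful in taking $b=(n+1)/v$ and then relaxing to~$\varrho$, whereas the paper writes $b=\varrho$ directly; either way works.
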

\begin{proof}
	Lorsque $\varrho<1$, nous pouvons écrire
	\[P_n(v)=\frac1{2\pi i}\oint_{|z|=\varrho}\frac{\e^{zv}\d z}{(1-z)z^{n+1}}.\] 
	Puisque, par ailleurs,
	\[\frac1{1-z}=\frac{1}{1-\varrho}+\frac{z-\varrho}{(1-\varrho)^2}+\frac{(z-\varrho)^2}{(1-z)(1-\varrho)^2}\quad(|z|<1),\]
	nous obtenons
	\begin{equation}\label{eq:Pn:decomposition}
		P_n(v)=\frac{v^{n}}{(1-\varrho)n!}+\frac{1}{2\pi i(1-\varrho)^2}\oint_{|z|=\varrho}\frac{\e^{zv}(z-\varrho)^2\d z}{(1-z)z^{n+1}}.
	\end{equation}
	Or, le module de l'intégrale du membre de droite de \eqref{eq:Pn:decomposition} peut être majoré par
	\begin{equation}\label{eq:majo:Pn:laplacemethod}
		\frac{\varrho^{2-n}}{1-\varrho}\int_{-\pi}^\pi\e^{\varrho v\cos\vartheta}|\e^{i\vartheta}-1|^2\d\vartheta\ll\frac{\e^{n}\varrho^{2-n}}{1-\varrho}\int_{-\infty}^\infty\e^{-n\vartheta^2}\vartheta^2\d\vartheta\ll\frac{v^n\varrho}{n!(1-\varrho)v},
	\end{equation}
	d'après la formule de Stirling. La formule \eqref{eq:sommePartExp:small} s'ensuit en regroupant les estimations \eqref{eq:Pn:decomposition} et \eqref{eq:majo:Pn:laplacemethod}.\par
	Afin d'établir la formule \eqref{eq:sommePartExp:uniform},  posons $y:=(n-v)/\sqrt n$, $z:=(n-v)/\sqrt v$. Lorsque $n\leq\tfrac12 v$, l'estimation souhaitée résulte directement de  \eqref{eq:sommePartExp:small} puisque l'on alors
	\[P_n(v)\ll \frac{v^n}{n!}\ll\frac{\e^v}{\sqrt v},\quad\Phi(z)\ll\frac1{\sqrt v}.\] 
	Dans le cas $\tfrac12 v\leq n\leq 2v$, nous avons $y\leq z$, d'où
	\begin{equation}\label{eq:diff:Phis}
		\Phi(z)-\Phi(y)=\frac1{\sqrt{2\pi}}\int_{y}^{z}\e^{-t^2/2}\d t\ll\e^{-y^2/4}(z-y)\ll\frac{\e^{-y^2/4}(n-v)^2}{v\sqrt n}\ll\frac{y^2\e^{-y^2/4}}{\sqrt v}\ll\frac1{\sqrt v}.
	\end{equation}
	Nous sommes donc en mesure d'appliquer \cite[lemme 2.1]{bib:dekoninck}  sous la forme
	\[P_n(v)=\Phi(y)+O\Big(\frac1{\sqrt v}\Big)=\Phi(z)+O\Big(\frac1{\sqrt v}\Big).\]
	Enfin, lorsque $n\geq 2v$, nous avons $\varrho\geq 2$ et la formule \eqref{eq:sommePartExp:uniform} découle de \eqref{eq:majo:norton:sup} qui fournit
	\[P_n(v)=\e^v\Big\{1+O\Big(\frac1{\sqrt v}\Big)\Big\}, \]
alors que $\Phi(z)=1+O(\e^{-v/2})$.
	\par Enfin, la formule \eqref{eq:sommePartExp:big} est une conséquence directe de \eqref{eq:majo:norton:sup} puisque, lorsque $\varrho>1$,
	\[P_n(v)=\e^v-\sum_{j>\varrho v}\frac{v^j}{j!}=\e^v\bigg\{1+O\bigg(\frac{\sqrt\varrho\e^{-vQ(\varrho)}}{\{1-\varrho\}\sqrt v}\bigg)\bigg\}.\qedhere\]
\end{proof}

Pour $n\geq 1$, $y\geqslant 3$, posons
\begin{equation}\label{def:Rn;R}
Y:=2\log_2y,\quad R=R_n(Y):=\frac{P_{n-1}(Y)}{P_n(Y)}.
\end{equation}
Pour $y\geq 3$, $k\geq 1$, notons $\mu_{k,y}:=\sup(1,k/Y)$ et introduisons
\begin{equation}\label{def:Rv}
	\mathfrak R(k,y):=\frac{1}{\log_2 y+(\log y)^{2Q(\mu_{k,y})}\sqrt{\log_2 y}}\quad (k\geq 1,\,y\geq 3),
\end{equation}
où la fonction $Q$ est définie en \eqref{def:Q}. Rappelons enfin les définitions des fonctions $\HH_\nu$,  $\HH_\Omega^*$ en \eqref{def:Hnu;H0}.\par 
 Nous obtenons en premier lieu une estimation uniforme de $\lambda_\Omega(k,y)$.


\begin{theorem}\label{th:eq:lambdaOmega:unif}
	Nous avons, uniformément,
	\begin{equation}\label{eq:lambdaOmega:unif}
		\lambda_\Omega(k,y)=\frac{1+O(\mathfrak R(k,y))}{2^k}\HH_\Omega^*(2R)P_k(2\log_2 y)\quad(k\geq 1,\,y\geq 3).
	\end{equation}
	où $R$ est définie en \eqref{def:Rn;R}.
\end{theorem}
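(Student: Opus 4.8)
The strategy is to exploit the Dirichlet series generating function of the sequence counted by $\lambda_\Omega(k,y)$. For fixed $y$, consider the function of a complex variable $z$,
\[
L(z,y):=\sum_{P^+(n)\le y}\frac{z^{\Omega(n)}}{n}=\prod_{q\le y}\Big(1-\frac{z}{q}\Big)^{-1}\qquad(|z|<2),
\]
so that $\lambda_\Omega(k,y)$ is the coefficient of $z^k$ in $L(z,y)$. The key analytic input is that, after removing the dominant local factor $\prod_{q\le y}(1-1/q)^{-1}$ (which by Mertens is $\asymp\log y$) and rescaling, the remaining Euler product converges to $\HH_\Omega(z)$; more precisely one writes $L(z,y)=\HH_\Omega(z)(\log y)^{z}\{1+O(\varepsilon)\}$ in an appropriate sense, with the pole of $\HH_\Omega$ at $z=2$ being the source of the $2^{-k}$ main term. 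Extracting the coefficient by a Cauchy integral on a circle $|z|=\varrho$ and choosing the radius as the saddle point naturally produces a factor involving $P_k(2\log_2 y)$, since $(\log y)^{z}=\e^{z\log_2 y}$ and the partial exponential sum $P_k$ is exactly what one gets when one truncates $\e^{z\,\cdot}$ at degree $k$ against the pole.

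The concrete steps I would carry out are as follows. \textbf{Step 1:} Write $\lambda_\Omega(k,y)=\frac{1}{2\pi i}\oint L(z,y)z^{-k-1}\d z$ and factor $L(z,y)=\HH_\Omega^*(z)\cdot(1-\tfrac12 z)^{-1}\cdot G(z,y)$, where $G(z,y):=\prod_{q\le y}\{(1-z/q)^{-1}(1-1/q)^{z}\}\e^{-\gamma z}/\HH_\Omega(z)\cdot(\log y)^{z}$ is, after the Mertens normalization, close to $(\log y)^{z}$ uniformly on a disk of radius slightly below $2$, with a quantified error of size $O(1/\log_2 y)$ coming from the tail of the Euler product. \textbf{Step 2:} The factor $(1-\tfrac12 z)^{-1}(\log y)^{z}$ has coefficient sequence $\sum_{0\le j\le k}2^{-(k-j)}(\log_2 y)^{j}/j!=2^{-k}P_k(2\log_2 y)$ exactly; this is where the hypothesis $Y=2\log_2 y$ and the polynomial $P_k$ enter. \textbf{Step 3:} Evaluate the correction by a saddle-point/Cauchy estimate: the saddle of $z\mapsto \HH_\Omega^*(z)(1-\tfrac12 z)^{-1}(\log y)^{z}z^{-k}$ sits near $z=2$ when $k\gtrsim Y$ and near $z=k/\log_2 y$ when $k\ll Y$, and in both regimes one shows the contribution of $G(z,y)/(\log y)^{z}-1$ and of $\HH_\Omega^*(z)-\HH_\Omega^*(2R)$ is smaller by the factor $\mathfrak R(k,y)$. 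The appearance of $R=P_{k-1}(Y)/P_k(Y)$ as the argument of $\HH_\Omega^*$ is exactly the ratio of consecutive coefficients of $P_k(Y)$, i.e. the "effective saddle point'' of the polynomial part; replacing $z$ by $2R$ in the slowly-varying factor $\HH_\Omega^*$ is legitimate because $\HH_\Omega^*$ is analytic past $2$ and $2R$ lies within $O(\mathfrak R(k,y))$ of the true saddle. \textbf{Step 4:} Control the remainder uniformly. Here Lemma~\ref{l:eq:sommePartExp} is the workhorse: the three regimes $\varrho<1$, $\varrho\asymp 1$, $\varrho>1$ (with $\varrho=k/Y$) match the three estimates \eqref{eq:sommePartExp:small}, \eqref{eq:sommePartExp:uniform}, \eqref{eq:sommePartExp:big}, and feeding these into the Cauchy bound produces precisely the quantity $(\log y)^{2Q(\mu_{k,y})}\sqrt{\log_2 y}$ in the denominator of $\mathfrak R(k,y)$, since $Q(\varrho)$ is the Chernoff rate governing the decay of $P_k(Y)$ relative to $\e^Y$.

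\textbf{Main obstacle.} The delicate point is obtaining the single uniform error term $\mathfrak R(k,y)$ valid simultaneously for all $k\ge 1$ and all $y\ge 3$, rather than a patchwork of regime-dependent bounds. The difficulty is that for very small $y$ (so $Y$ small) the "secondary term'' $\log_2 y$ in $\mathfrak R$ dominates, for $k$ near $Y$ one is in the central-limit regime where $P_k(Y)\asymp\e^{Y}$ and the error is $\asymp 1/\sqrt{\log_2 y}$, and for $k$ far from $Y$ the Chernoff factor $(\log y)^{2Q(\mu_{k,y})}$ enormously shrinks $P_k(Y)$ so that the relative error must be proven to stay bounded by the stated expression — this requires carefully tracking how the saddle-point contour must be deformed (in particular, not forcing it through $z=2$ when $k<Y$) and showing the Euler-product error $O(1/\log_2 y)$ does not get amplified when divided by an exponentially small main term. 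Reconciling these three behaviours into one formula, and checking that the error genuinely collapses to $O(1/\log_2 y)$ in the bulk while remaining controlled in the tails, is where the bulk of the technical work lies; I expect one must split the Cauchy integral into a main arc near the saddle and complementary arcs, bound the latter by $\e^{-c\min(k,Y)}$ times the main term, and on the main arc perform a second-order Taylor expansion whose quadratic coefficient is $\asymp\min(k,Y)$, yielding the $\sqrt{\log_2 y}$ (when $k\gtrsim Y$) in the denominator.
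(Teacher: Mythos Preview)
Your overall architecture is the paper's: Cauchy integral for $L(z,y)=\prod_{q\le y}(1-z/q)^{-1}$, write $L(z,y)=\HH_\Omega^*(z)(1-\tfrac12z)^{-1}(\log y)^z\{1+O(1/\log y)\}$, observe that the coefficient of $z^k$ in $(1-\tfrac12z)^{-1}(\log y)^z$ is exactly $2^{-k}P_k(Y)$, and then control the perturbation by $\HH_\Omega^*$. Lemma~\ref{l:eq:sommePartExp} is indeed the workhorse for the three regimes.

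There is, however, a real gap in your Step~3. You describe $R=P_{k-1}(Y)/P_k(Y)$ as ``the effective saddle point'' and justify freezing $\HH_\Omega^*$ at $2R$ by saying $2R$ lies within $O(\mathfrak R(k,y))$ of the true saddle. That reasoning would only give a first-order error $\asymp 1/\sqrt{Y}$, not the claimed $\mathfrak R(k,y)$. The actual point of the specific value $R=P_{k-1}(Y)/P_k(Y)$ is algebraic, not analytic: after the substitution $s=z/2$ one has
\[
\frac{1}{2\pi i}\oint_{|s|=R}\frac{(s-R)\,\e^{sY}}{(1-s)s^{k+1}}\d s \;=\; P_{k-1}(Y)-R\,P_k(Y)\;=\;0,
\]
so the \emph{first-order} Taylor term of $\HH_\Omega^*(2s)$ around $s=R$ integrates to zero identically. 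Only then does the error come from the genuinely second-order remainder $\int_0^1(1-t){\HH_\Omega^*}''(\cdots)\,\d t\cdot(s-R)^2$, which is what produces the extra factor of $Y$ (resp.\ $(\log y)^{2Q(\mu_{k,y})}\sqrt{Y}$) in the denominator of $\mathfrak R$. This vanishing-of-the-linear-term trick (the paper traces it to Balazard) is the one idea your plan is missing; without it the uniform bound you are aiming for does not follow from the saddle heuristic alone.

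A second, smaller divergence: the paper never splits the circle into major/minor arcs. It integrates over the full circle $|s|=R$ in every regime; in the range $k\ge Y+\sqrt Y$ it then \emph{shifts} the contour from $|s|=R<1$ to $|s|=\varrho=k/Y>1$, picking up the residue at $s=1$, and bounds the shifted integral by a Laplace-type estimate. Your arc decomposition could presumably be made to work, but the contour-shift is what cleanly produces the factor $\e^{-YQ(\varrho)}$ matching $(\log y)^{2Q(\mu_{k,y})}$ in $\mathfrak R$. (Also, the Euler-product error is $O(1/\log y)$ via a strong form of the prime number theorem, not $O(1/\log_2 y)$; this matters because $\mathfrak R(k,y)$ can be much smaller than $1/\log_2 y$ for large $k$.)
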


\begin{proof}
	Le membre de gauche de \eqref{eq:lambdaOmega:unif} est le coefficient de $z^k$ de la série
	\[\sum_{P^+(n)\leqslant y}\frac{z^{\Omega(n)}}{n}=\prod_{q\leq y} \Big(1-\frac{z}{q}\Big)^{-1}\quad(|z|<2).\]
	Une forme forte du théorème des nombres premiers permet de réécrire cette somme sous la forme
	\[\sum_{P^+(n)\leqslant y}\frac{z^{\Omega(n)}}{n}=\HH_\Omega(z)(\log y)^z\Big\{1+O\Big(\frac{1}{\log y}\Big)\Big\}.\]
	La formule de Cauchy implique donc, avec la notation \eqref{def:Hnu;H0},
	\begin{equation}\label{eq:lambdaOmega:cauchy}
		\lambda_\Omega(k,y)=\frac{1+O(1/\log y)}{2\pi i}\oint_{|z|=2R}\frac{\HH_\Omega^*(z)\e^{z\log_2 y}}{(1-z/2)z^{k+1}}\d z.
	\end{equation}
	Afin d'alléger les notations, posons 
	\begin{equation}\label{def:Y;varrho}
		Y:=2\log_2 y\quad (y\geq 3),\qquad \varrho:=\frac kY\quad(y\geq 3,\,k\in\N),
	\end{equation}
	et effectuons le changement de variables $s=\tfrac12z$. Nous obtenons
	\[\lambda_\Omega(k,y)=\frac{1+O(1/\log y)}{2^{k}(2\pi i)}\oint_{|s|=R}\frac{\HH_\Omega^*(2s)\e^{sY}}{(1-s)s^{k+1}}\d s.\]
	Remarquons alors que, pour $k\in\N$,
	\[\frac{1}{2\pi i}\oint_{|s|=R}\frac{\e^{sY}}{(1-s)s^{k+1}}\d s=\sum_{0\leq j\leq k}\frac{Y^j}{j!}=P_k(Y).\]
	Or,
	\[\frac{1}{2\pi i}\oint_{|s|=R}\frac{(s-R)\e^{sY}}{(1-s)s^{k+1}}\d s=P_{k-1}(Y)-RP_k(Y)=0,\]
	de sorte qu'en posant 
	\[\varphi(s):=\int_0^1(1-t){\HH_\Omega^*}''(R+t(s-R))\d t\quad (|s|<3),\quad E(k,y):=\frac{1}{2\pi i}\oint_{|s|=R}\frac{\e^{sY}(s-R)^2\varphi(s)}{(1-s)s^{k+1}}\d s,\]
	nous obtenons
	\begin{equation}\label{eq:lambdaOmega:inter:implicitError}
		\lambda_\Omega(k,y)=\frac{\HH_\Omega^*(2R)P_k(Y)+E(k,y)}{2^k}\Big\{1+O\Big(\frac{1}{\log y}\Big)\Big\}.
	\end{equation}
	Traitons le terme d'erreur $E(k,y)$ comme dans \cite[th. 1]{bib:balazard:thesis}. \par
	Supposons dans un premier temps que $k\leq Y-\sqrt Y$. En particulier, $\varrho\leq 1-1/\sqrt Y$. D'après \eqref{eq:sommePartExp:small}, nous avons
	\begin{equation}\label{eq:R:varrho:smallk:util}
		R=\frac{P_{k-1}(Y)}{P_k(Y)}=1-\frac{Y^k}{P_k(Y)k!}=\varrho+O\Big(\frac \varrho{Y(1-\varrho)}\Big)=\varrho+O\Big(\frac 1{\sqrt{Y}}\Big).
	\end{equation}
	Posons 
	\[\varphi^*(z):=\frac{(z-R)^2\varphi(z)}{1-z}\quad(\Re z<1),\]
	de sorte que 
	\[E(k,y)=\frac1{2\pi i}\oint_{|s|=R}\frac{\e^{sY}\varphi^*(s)}{s^{k+1}}\d s.\]
	La fonction $\varphi$ étant holomorphe sur $D(0,3)$, elle est bornée sur le disque unité. Nous avons donc, pour une constante convenable $A_\varphi$,
	\[|\varphi^*(z)|=|R-z|\Big|1+\frac{R-1}{1-z}\Big||\varphi(z)|\leq A_\varphi|R-z|\bigg(1+\frac{\{1-R\}\sqrt Y}{t}\bigg)\leq 4A_\varphi+O(1)\quad(|z|\leq\varrho),\]
	où la constante implicite est absolue. Puisque $s\mapsto\e^{sY}\varphi^*(s)/s^{k+1}$ n'admet aucun pôle dans le domaine $R\leq |z|\leq\varrho$, nous obtenons en vertu de \cite[lemme 3.1]{bib:papier1},
	\[E(k,y)\ll \frac{Y^k(\varrho-R)^2}{(1-\varrho)k!}\ll\frac{P_k(Y)}{Y},\]
	d'après \cite[(3.4)]{bib:papier1}, \eqref{eq:sommePartExp:small} et \eqref{eq:R:varrho:smallk:util}. Ce terme d'erreur est pleinement acceptable au vu de \eqref{eq:lambdaOmega:unif} puisque $\mu_{k,y}=1$ et  donc $\mathfrak R(k,y)\asymp1/Y$.\par
	Supposons ensuite que $k\geq Y+\sqrt Y$, de sorte que $\varrho\geqslant 1+1/\sqrt{Y}$. Nous déduisons alors de \eqref{eq:sommePartExp:big} et de la formule de Stirling que
	\begin{equation}\label{eq:rapportsommepartexp:bigk}
		1-R=\frac{Y^{k}}{k!P_k(Y)}\ll\frac{Y^{k}\e^{k-Y}}{k^{k+1/2}}\ll\frac{\e^{-YQ(\varrho)}}{\sqrt{Y}}.
	\end{equation}
	Puisque $R<1<\varrho$, nous pouvons appliquer le théorème des résidus afin d'obtenir
	\begin{equation}\label{eq:lambdaOmegaerror:residuthm}
		\int_{|s|=\varrho}\frac{\e^{sY}\varphi^*(s)}{s^{k+1}}\d s=E(k,y)-\e^Y(1-R)^2\varphi(1).
	\end{equation}
	Remarquons que $|\varphi^*(s)|\ll 1$ lorsque $|s|=\varrho$. Le module du membre de gauche de \eqref{eq:lambdaOmegaerror:residuthm} peut donc être majoré par 
	\[\frac{\e^{\varrho Y}}{\varrho^{k}}\int_{-\infty}^\infty\e^{-\varrho Y\vartheta^2/\pi^2}\d\vartheta\ll\frac{\e^{\varrho Y}}{\varrho^k\sqrt Y}\ll\frac{\e^{Y\{1-Q(\varrho)\}}}{\sqrt Y}\ll\frac{P_k(Y)}{\sqrt Y\e^{YQ(\varrho)}},\]
	d'après \eqref{eq:sommePartExp:big}. En regroupant les estimations \eqref{eq:rapportsommepartexp:bigk} et \eqref{eq:lambdaOmegaerror:residuthm}, nous obtenons
	\begin{equation}\label{eq:majo:error:mediumk:inter}
		|E(k,y)|\ll\frac{P_k(Y)}{\sqrt Y\e^{YQ(\varrho)}}\quad(k\geq Y+\sqrt Y).
	\end{equation}
	\par Il reste à étudier le cas $|k-Y|\leq \sqrt Y$. Puisque $R<1$, nous avons $|z-R|<|1-z|\ (|z|=R)$ de sorte qu'avec la majoration triviale $P_k(Y)\ll \e^Y$, nous pouvons écrire
	\begin{equation}\label{eq:majo:lambdaOmegaError:mediumk}
		|E(k,y)|\ll\frac{\e^{RY}}{R^{k-1}}\int_{-\infty}^\infty|\vartheta|\e^{-RY\vartheta^2}\d\vartheta\ll \frac{P_k(Y)\e^{Y(R-1-\varrho\log R)}}{Y}.
	\end{equation}
	La majoration \eqref{eq:rapportsommepartexp:bigk} étant encore valable lorsque $|k-Y|\leq \sqrt Y$, nous obtenons
	\begin{equation}\label{eq:majo:lambdaOmegaError:inter:mediumk}
		\e^{Y(R-1-\varrho\log R)}\ll\e^{Y(R-1)(1-\varrho)}\ll1.
	\end{equation}
	En effet, lorsque $\varrho<1$, c'est immédiat puisque $R<1$. Enfin, lorsque $\varrho>1$, nous avons
	\[(R-1)(1-\varrho)\ll\frac{(\varrho-1)\e^{-YQ(\varrho)}}{\sqrt Y}\ll \frac1Y.\]
	Nous déduisons alors le résultat annoncé des estimations \eqref{eq:lambdaOmega:inter:implicitError}, \eqref{eq:majo:error:mediumk:inter}, \eqref{eq:majo:lambdaOmegaError:mediumk} et \eqref{eq:majo:lambdaOmegaError:inter:mediumk}.
\end{proof}

Le Théorème \ref{th:eq:lambdaOmega:unif} fournit une estimation uniforme de la quantité $\lambda_\Omega(k,y)$ en les variables $y$ et $k$. À l'aide des estimations des sommes partielles de la série exponentielle obtenues au Lemme \ref{l:eq:sommePartExp}, nous pouvons déduire de \eqref{eq:lambdaOmega:unif} des estimations effectives de $\lambda_\Omega(k,y)$ dans des domaines restreints de valeurs  de $k$. C'est l'objet des trois énoncés suivants.


\begin{corollary}\label{cor:eq:lambdaOmega:smallk}
	Soit $A\geq 1$. Nous avons, uniformément pour $1\leq k\leq 2\log_2 y-A\sqrt{\log_2 y}$,
	\begin{equation}\label{eq:lambdaOmega:smallk}
		\lambda_\Omega(k,y)=\HH_\Omega\Big(\frac{k}{\log_2 y}\Big)\frac{(\log_2 y)^k}{k!}\Big\{1+O\Big(\frac{k}{A^2\log_2 y}\Big)\Big\}.
	\end{equation}
\end{corollary}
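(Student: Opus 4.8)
The plan is to start from the uniform estimate \eqref{eq:lambdaOmega:unif} of Theorem \ref{th:eq:lambdaOmega:unif} and simplify each of its three factors in the restricted range $1\leq k\leq Y-A\sqrt Y$, where $Y=2\log_2 y$. First I would control $R=R_k(Y)=P_{k-1}(Y)/P_k(Y)$ via estimate \eqref{eq:sommePartExp:small}: since here $\varrho=k/Y\leq 1-A/\sqrt Y<1$, we get $R=1-Y^k/(k!P_k(Y))=\varrho+O\big(\varrho/(Y(1-\varrho))\big)$, and the bound $1-\varrho\gg 1/\sqrt Y$ gives $R=\varrho+O(\sqrt Y/Y)=\varrho+O(1/\sqrt Y)$ — actually one wants the sharper relative form $R-\varrho\ll \varrho/(Y(1-\varrho))$, which will feed directly into the final error term. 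Next, since $\HH_\Omega^*$ is holomorphic on $\Re z<3$ (hence $C^1$ near the relevant real points, which lie in $[0,2-2A/\sqrt Y]$), a first-order Taylor expansion gives $\HH_\Omega^*(2R)=\HH_\Omega^*(2\varrho)+O(|R-\varrho|)=\HH_\Omega^*(2\varrho)\{1+O(|R-\varrho|)\}$, using that $\HH_\Omega^*$ is bounded below away from $0$ on the compact range in question. Here $2\varrho=k/\log_2 y$, and by the definition \eqref{def:Hnu;H0}, $\HH_\Omega^*(k/\log_2 y)=(1-\tfrac12\cdot k/\log_2y)\HH_\Omega(k/\log_2y)$; I expect the corollary's stated form with $\HH_\Omega(k/\log_2 y)$ alone to come from absorbing the prefactor $(1-k/(2\log_2y))=1-\varrho$ into the exponential-sum factor.

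Indeed, the remaining step is to handle $P_k(Y)$ by \eqref{eq:sommePartExp:small} again: $P_k(Y)=\dfrac{Y^k}{(1-\varrho)k!}\big\{1+O\big(\varrho/(Y(1-\varrho)^2)\big)\big\}$. Multiplying by $2^{-k}$ and recalling $Y=2\log_2 y$ turns $2^{-k}Y^k$ into $(\log_2 y)^k$, so $2^{-k}P_k(Y)=\dfrac{(\log_2y)^k}{(1-\varrho)k!}\{1+O(\cdots)\}$. The factor $1/(1-\varrho)$ here cancels precisely against the factor $(1-\varrho)$ extracted from $\HH_\Omega^*(2\varrho)$, leaving $\HH_\Omega(k/\log_2 y)(\log_2 y)^k/k!$ as the main term. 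It remains to collect all error contributions: the $O(\mathfrak R(k,y))$ from \eqref{eq:lambdaOmega:unif} with $\mu_{k,y}=1$ (so $\mathfrak R(k,y)\asymp 1/\log_2 y$, absorbed since $k\geq 1$), the $O(|R-\varrho|)=O(\varrho/(Y(1-\varrho)))$ from the Taylor step, and the $O(\varrho/(Y(1-\varrho)^2))$ from the last application of \eqref{eq:sommePartExp:small}. Since $1-\varrho\geq A/\sqrt Y$, i.e.\ $1/(1-\varrho)^2\leq Y/A^2$, the dominant error is $\varrho/(Y(1-\varrho)^2)\ll \varrho/A^2 \asymp k/(A^2\log_2 y)$, which matches the claimed $O\big(k/(A^2\log_2y)\big)$; the other two error terms are smaller and absorbed.

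The main obstacle I anticipate is the bookkeeping of the two places where $(1-\varrho)$ enters — once as a pole-type factor $1/(1-\varrho)$ coming from $P_k(Y)$ near $\varrho\to 1$, and once as the zero $(1-\varrho)$ of $\HH_\Omega^*(2\varrho)=(1-\varrho)\HH_\Omega(2\varrho)$ — and checking that they cancel exactly rather than merely up to lower-order terms; one must be careful to keep $\HH_\Omega^*$ itself (not its approximation) long enough to see the algebraic cancellation, and only then Taylor-expand and pass to $\HH_\Omega(k/\log_2 y)$. A secondary technical point is verifying that all the relevant arguments of $\HH_\Omega^*$ stay in a fixed compact subset of $\{\Re z<3\}$ on which $\HH_\Omega^*$ and its derivative are bounded and $\HH_\Omega^*$ is bounded away from $0$; this is where the hypothesis $k\leq 2\log_2 y-A\sqrt{\log_2 y}$ (equivalently $2\varrho\leq 2-2A/\sqrt Y<2$) is used, keeping us safely to the left of the singularity of $\HH_\Omega$ at $z=2$.
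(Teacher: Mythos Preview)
Your plan is correct and follows essentially the same approach as the paper's proof: both start from \eqref{eq:lambdaOmega:unif}, apply \eqref{eq:sommePartExp:small} to estimate $P_k(Y)$ and then $R$, and finish via the identity $\HH_\Omega^*(z)=(1-z/2)\HH_\Omega(z)$, which produces the exact cancellation of the $(1-\varrho)$ factors you describe. Your error bookkeeping, in particular identifying $\varrho/(Y(1-\varrho)^2)\ll k/(A^2\log_2 y)$ as the dominant contribution, matches the paper's (terser) treatment.
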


\begin{proof}
	Conservons les notations \eqref{def:Y;varrho}. En injectant l'estimation \eqref{eq:sommePartExp:small} dans \eqref{eq:lambdaOmega:unif}, nous obtenons, pour les valeurs de $k$ considérées,
	\begin{equation}\label{eq:lambdaOmega:inter:smallk}
		\lambda_\Omega(k,y)=\frac{\HH_\Omega^*(2R)(\log_2 y)^k}{(1-\varrho)k!}\Big\{1+O\Big(\frac{k}{A^2\log_2 y}\Big)\Big\}.
	\end{equation}
Or, l'estimation \eqref{eq:sommePartExp:small} fournit également
	\begin{equation}\label{eq:rapportsommepartexp:cor:smallk}
		R=\frac{P_{k-1}(Y)}{P_k(Y)}=\frac{k(1-k/Y)}{Y(1-\{k-1\}/Y)}\Big\{1+O\Big(\frac{k}{A^2Y}\Big)\Big\}=\varrho\Big\{1+O\Big(\frac{k}{A^2Y}\Big)\Big\}.
	\end{equation}
	Puisque $\HH_\Omega^*(z)=(1-z/2)\HH(z)$ $(\Re z<2)$, le résultat annoncé s'ensuit en regroupant les estimations \eqref{eq:lambdaOmega:inter:smallk} et \eqref{eq:rapportsommepartexp:cor:smallk} 
\end{proof}


Rappelons que $\gh=\HH_\Omega^*(2)$.


\begin{corollary}\label{cor:eq:lambdaOmega:bigk}
	Nous avons, uniformément pour $0<\varepsilon\leq\tfrac12$, $(2+\varepsilon)\log_2 y\leq k\leq (\log y)/\log 2$,
	\begin{equation}\label{eq:lambdaOmega:bigk}
		\lambda_\Omega(k,y)=\frac{\gh(\log y)^2}{2^k}\bigg\{1+O\bigg(\frac{\sqrt k}{\varepsilon(\log y)^{2Q(1+\varepsilon/2)}\log_2 y}\bigg)\bigg\}\qquad(y\geq 3).
	\end{equation}
\end{corollary}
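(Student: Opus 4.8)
The strategy is to specialize Theorem \ref{th:eq:lambdaOmega:unif} to the range $(2+\varepsilon)\log_2 y\le k\le(\log y)/\log 2$ and then to replace the two $y$-dependent factors $\HH_\Omega^*(2R)$ and $P_k(2\log_2 y)$ by their limiting values. Keeping the notation $Y:=2\log_2 y$ and $\varrho:=k/Y$, the hypothesis gives $\varrho\ge 1+\tfrac12\varepsilon>1$, so we are squarely in the regime where \eqref{eq:sommePartExp:big} applies: $P_k(Y)=\e^Y\{1+O(\sqrt\varrho\,\e^{-YQ(\varrho)}/(\{\varrho-1\}\sqrt Y))\}$. Since $Q$ is increasing on $(1,\infty)$ and $\varrho\ge 1+\varepsilon/2$, we have $\e^{-YQ(\varrho)}\le\e^{-YQ(1+\varepsilon/2)}=(\log y)^{-2Q(1+\varepsilon/2)}$; moreover $\varrho\le 1/(2\log 2)\cdot(\log_2 y)^{-1}\cdot\log y$ is bounded polynomially, and in fact one checks $\sqrt\varrho/(\varrho-1)\ll\sqrt k/(\varepsilon\log_2 y)$ crudely (using $\varrho-1\ge\varepsilon/2$ and $\sqrt\varrho\le\sqrt k/\sqrt Y$). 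Hence
\[P_k(Y)=\e^Y\Bigl\{1+O\Bigl(\tfrac{\sqrt k}{\varepsilon(\log y)^{2Q(1+\varepsilon/2)}\log_2 y}\Bigr)\Bigr\}=(\log y)^2\Bigl\{1+O\Bigl(\tfrac{\sqrt k}{\varepsilon(\log y)^{2Q(1+\varepsilon/2)}\log_2 y}\Bigr)\Bigr\},\]
since $\e^Y=(\log y)^2$.

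Next I would handle $\HH_\Omega^*(2R)$. By \eqref{eq:rapportsommepartexp:bigk} (valid for $k\ge Y+\sqrt Y$, which holds here since $\varrho\ge 1+\varepsilon/2$ forces $k-Y\ge\tfrac\varepsilon2 Y\gg\sqrt Y$), we have $1-R\ll\e^{-YQ(\varrho)}/\sqrt Y\le(\log y)^{-2Q(1+\varepsilon/2)}/\sqrt{\log_2 y}$, which is $\ll$ the error term already present. Since $\HH_\Omega^*$ is holomorphic near $z=2$ (it is defined for $\Re z<3$) with $\HH_\Omega^*(2)=\gh$, a first-order Taylor expansion gives $\HH_\Omega^*(2R)=\gh+O(1-R)=\gh\{1+O((\log y)^{-2Q(1+\varepsilon/2)}/\sqrt{\log_2 y})\}$, again absorbed into the stated error. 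Finally, the error term $\mathfrak R(k,y)$ from Theorem \ref{th:eq:lambdaOmega:unif}: here $\mu_{k,y}=k/Y=\varrho$, so $\mathfrak R(k,y)=1/(\log_2 y+(\log y)^{2Q(\varrho)}\sqrt{\log_2 y})\ll(\log y)^{-2Q(\varrho)}/\sqrt{\log_2 y}\le(\log y)^{-2Q(1+\varepsilon/2)}/\sqrt{\log_2 y}$, which is dominated by the claimed error (which carries an extra $\sqrt k/(\varepsilon\log_2 y)\gg 1$). Multiplying the three estimates together in \eqref{eq:lambdaOmega:unif} yields $\lambda_\Omega(k,y)=2^{-k}\gh(\log y)^2\{1+O(\sqrt k/(\varepsilon(\log y)^{2Q(1+\varepsilon/2)}\log_2 y))\}$, as asserted.

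The only mildly delicate point is verifying that each of the several error contributions — $\mathfrak R(k,y)$, the $(1-R)$-correction in $\HH_\Omega^*(2R)$, and the $P_k(Y)$-correction — is $\ll\sqrt k/(\varepsilon(\log y)^{2Q(1+\varepsilon/2)}\log_2 y)$ uniformly, the subtlety being that $Q(1+\varepsilon/2)\to 0$ as $\varepsilon\to 0$ so the factor $(\log y)^{2Q(1+\varepsilon/2)}$ may be close to $1$; but since $Q(\varrho)\ge Q(1+\varepsilon/2)$ throughout and the three corrections each decay at least as fast as $(\log y)^{-2Q(\varrho)}$ times a lower-order factor, monotonicity of $Q$ on $(1,\infty)$ closes the argument. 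I expect this bookkeeping to be the main (and essentially the only) obstacle.
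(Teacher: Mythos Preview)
Your proposal is correct and follows essentially the same route as the paper: substitute \eqref{eq:sommePartExp:big} into \eqref{eq:lambdaOmega:unif} to replace $P_k(Y)$ by $\e^Y=(\log y)^2$, use \eqref{eq:sommePartExp:big} once more to show $R=1+O(\sqrt\varrho\,\e^{-YQ(\varrho)}/\{(\varrho-1)\sqrt Y\})$ so that $\HH_\Omega^*(2R)=\gh+O(\cdot)$, and bound everything via the monotonicity of $Q$ on $(1,\infty)$. One minor slip: your intermediate inequality $\sqrt\varrho/(\varrho-1)\ll\sqrt k/(\varepsilon\log_2 y)$ is off by a factor $\sqrt Y$ (you dropped the $1/\sqrt Y$ already present in the error term of \eqref{eq:sommePartExp:big}), but since the full expression $\sqrt\varrho/\{(\varrho-1)\sqrt Y\}=\sqrt k/\{(\varrho-1)Y\}\ll\sqrt k/(\varepsilon\log_2 y)$ is what you actually need, the displayed bound for $P_k(Y)$ and the rest of the argument are unaffected.
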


\begin{proof}
	Les paramètres $Y$ et $\varrho$ étant définis par \eqref{def:Y;varrho}, nous avons $\varrho\geqslant 1+\tfrac12\varepsilon$. En substituant \eqref{eq:sommePartExp:big} dans \eqref{eq:lambdaOmega:unif}, nous obtenons
	\begin{equation}\label{eq:lambdaOmega:inter:bigk}
		\lambda_\Omega(k,y)=\frac{\HH_\Omega^*(2R)\e^Y}{2^k}\bigg\{1+O\bigg(\frac{\sqrt\varrho}{\varepsilon(\log y)^{2Q(1+\varepsilon/2)}\sqrt{\log_2 y}}\bigg)\bigg\}.
	\end{equation}
	Une nouvelle application de \eqref{eq:sommePartExp:big} permet d'écrire
	\begin{equation}\label{eq:rapportsommepartexp:cor:bigk}
		R=\frac{P_{k-1}(Y)}{P_k(Y)}=1+O\bigg(\frac{\sqrt\varrho\e^{-Q(\varrho)Y}}{\sqrt Y\{\varrho-1\}}\bigg).
	\end{equation}
	L'estimation \eqref{eq:lambdaOmega:bigk} résulte des deux formules précédentes.
\end{proof}


Le Théorème \ref{th:eq:lambdaOmega:unif} permet de préciser le comportement de $\lambda_\Omega(k,y)$ dans le domaine critique des valeurs de $k$, soit  $k\approx2\log_2 y$. Posons, pour $k\in\N$, $y\geq 3$,
\[\Delta_k(t):=\frac{k-t}{\sqrt t}\quad(t\geq 1),\quad\Delta_{k,y}:=\Delta_k(2\log_2 y).\]


\begin{corollary}\label{cor:eq:lambdaOmega:mediumk}
	Nous avons, uniformément pour $y\geq 3$, $k\geq 2$,
	\begin{equation}\label{eq:lambdaOmega:mediumk}
		\lambda_\Omega(k,y)=\frac{\gh(\log y)^2}{2^k}\bigg\{\Phi(\Delta_{k,y})+O\bigg(\frac{1+|\Delta_{k,y}|}{\sqrt{\log_2 y}}\bigg)\bigg\}.
	\end{equation}
\end{corollary}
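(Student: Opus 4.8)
The plan is to combine Theorem~\ref{th:eq:lambdaOmega:unif} with the uniform Gaussian approximation \eqref{eq:sommePartExp:uniform}; the only genuinely substantial task is to replace $\HH_\Omega^*(2R)$ by $\gh=\HH_\Omega^*(2)$ at a cost compatible with the announced error term. Posons $Y:=2\log_2 y$. Nous pouvons supposer $\log_2 y\geq 1$: sur le domaine born\'e $3\leq y<\e^{\e}$ toutes les quantit\'es en jeu sont $O(1)$ et \eqref{eq:lambdaOmega:mediumk} se r\'eduit \`a $\lambda_\Omega(k,y)\ll(\log y)^2/2^k$, cons\'equence imm\'ediate de \eqref{eq:lambdaOmega:unif} puisque $P_k(Y)\leq\e^Y=(\log y)^2$ et que $\HH_\Omega^*$ est born\'ee sur $[0,2]$. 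Sous cette hypoth\`ese, la formule \eqref{eq:lambdaOmega:unif} s'\'ecrit
\[\lambda_\Omega(k,y)=\frac{1+O(\mathfrak R(k,y))}{2^k}\,\HH_\Omega^*(2R)\,P_k(Y),\qquad R=\frac{P_{k-1}(Y)}{P_k(Y)},\]
et l'on constate aussit\^ot, via \eqref{def:Rv}, que $\mathfrak R(k,y)\ll 1/\log_2 y$, l'in\'egalit\'e $Q(\mu_{k,y})\geq 0$ entra\^inant $(\log y)^{2Q(\mu_{k,y})}\geq 1$.

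Nous proc\'edons ensuite au remplacement de $\HH_\Omega^*(2R)$ par $\gh$. La fonction $\HH_\Omega^*$ \'etant holomorphe sur $\{\Re z<3\}$, sa d\'eriv\'ee est born\'ee sur le segment compact $[0,2]$; comme $0\leq 2R<2$, l'in\'egalit\'e des accroissements finis donne $|\HH_\Omega^*(2R)-\gh|\ll 1-R$. Le point d\'ecisif est alors l'identit\'e \'el\'ementaire
\[(1-R)\,P_k(Y)=P_k(Y)-P_{k-1}(Y)=\frac{Y^k}{k!},\]
jointe \`a la majoration uniforme $Y^k/k!\ll\e^Y/\sqrt Y$. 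Cette derni\`ere d\'ecoule de \eqref{eq:sommePartExp:uniform} elle-m\^eme: en l'appliquant \`a $P_k(Y)$ et \`a $P_{k-1}(Y)$ puis en soustrayant, il vient $Y^k/k!=\e^Y\{\Phi(\Delta_{k,y})-\Phi(\Delta_{k-1}(Y))+O(1/\sqrt Y)\}$, et comme $\Delta_{k,y}-\Delta_{k-1}(Y)=1/\sqrt Y$, la diff\'erence des valeurs de $\Phi$ est $\leq 1/\sqrt{2\pi Y}$. Par suite
\[\HH_\Omega^*(2R)\,P_k(Y)=\gh\,P_k(Y)+O\big((1-R)P_k(Y)\big)=\gh\,P_k(Y)+O\Big(\frac{\e^Y}{\sqrt Y}\Big),\]
et, en absorbant le facteur $1+O(\mathfrak R(k,y))$ gr\^ace \`a $P_k(Y)\leq\e^Y$ et $\mathfrak R(k,y)\ll 1/\log_2 y\ll 1/\sqrt{\log_2 y}$, nous obtenons $\lambda_\Omega(k,y)=2^{-k}\{\gh P_k(Y)+O(\e^Y/\sqrt{\log_2 y})\}$.

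Il ne reste plus qu'\`a substituer une nouvelle fois \eqref{eq:sommePartExp:uniform}, soit $P_k(Y)=\e^Y\{\Phi(\Delta_{k,y})+O(1/\sqrt{\log_2 y})\}$, et \`a utiliser $\e^Y=(\log y)^2$ pour conclure
\[\lambda_\Omega(k,y)=\frac{\gh(\log y)^2}{2^k}\Big\{\Phi(\Delta_{k,y})+O\Big(\frac1{\sqrt{\log_2 y}}\Big)\Big\},\]
ce qui est m\^eme l\'eg\`erement plus pr\'ecis que \eqref{eq:lambdaOmega:mediumk} (remplacer la constante $1$ par $1+|\Delta_{k,y}|$ redonne la forme annonc\'ee). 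La seule difficult\'e r\'eside dans la bande critique $k\asymp Y$: $R$ y est trop proche de $1$ pour \eqref{eq:sommePartExp:small} et trop \'eloign\'e du r\'egime de \eqref{eq:sommePartExp:big}, de sorte qu'une estimation directe de $R$ \`a la mani\`ere des Corollaires~\ref{cor:eq:lambdaOmega:smallk} et \ref{cor:eq:lambdaOmega:bigk} serait malais\'ee; l'identit\'e $(1-R)P_k(Y)=Y^k/k!$ est pr\'ecis\'ement ce qui rend le passage de $\HH_\Omega^*$ \`a $\gh$ uniforme et ram\`ene tout \`a la majoration $Y^k/k!\ll\e^Y/\sqrt Y$ et \`a \eqref{eq:sommePartExp:uniform}.
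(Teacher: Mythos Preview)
Your proof is correct and in fact delivers the slightly sharper error term $O\big(1/\sqrt{\log_2 y}\big)$ in place of the paper's $O\big((1+|\Delta_{k,y}|)/\sqrt{\log_2 y}\big)$.

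The route, however, differs from the paper's. The paper also starts from Theorem~\ref{th:eq:lambdaOmega:unif} and \eqref{eq:sommePartExp:uniform}, but then estimates $R$ itself in three separate regimes ($\Delta_{k,y}\le -K$, $|\Delta_{k,y}|\le K$, $\Delta_{k,y}\ge K$), invoking respectively \eqref{eq:sommePartExp:small}, \eqref{eq:sommePartExp:uniform}, \eqref{eq:sommePartExp:big} to control $|2R-2|$; in the first regime this produces $|\HH_\Omega^*(2R)-\gh|\ll|\Delta_{k,y}|/\sqrt Y$, whence the extra $|\Delta_{k,y}|$ in the final error. Your key observation is that one never needs to estimate $R$ at all: the identity $(1-R)P_k(Y)=Y^k/k!$ transfers the mean-value bound $|\HH_\Omega^*(2R)-\gh|\ll 1-R$ directly to the \emph{product} $(1-R)P_k(Y)$, and the uniform inequality $Y^k/k!\ll \e^Y/\sqrt Y$ then closes the argument in one stroke. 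This replaces a three-case analysis by a single line, removes the spurious $|\Delta_{k,y}|$ factor, and would in fact marginally simplify the downstream estimate \eqref{eq:sOmega:Phigs2} in the paper.
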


\begin{proof}
	Conservons les notations \eqref{def:Y;varrho}. En utilisant \eqref{eq:sommePartExp:uniform} afin d'estimer $P_k(Y)$ dans \eqref{eq:lambdaOmega:unif}, nous obtenons
	\begin{equation}\label{eq:lambdaOmega:kbounded:inter}
		\lambda_\Omega(k,y)=\frac{\{1+O(\mathfrak R(k,y))\}\HH_\Omega^*(2R)\e^Y}{2^k}\bigg\{\Phi(\Delta_k(Y))+O\bigg(\frac1{\sqrt Y}\bigg)\bigg\}.
	\end{equation}
	Nous avons $k=Y+\Delta_{k,y}\sqrt Y$. Soit $K$ une constante positive assez grande. Lorsque $\Delta_{k,y}\leq -K$, nous avons $\varrho<1$. Dans ce cas, la formule \eqref{eq:sommePartExp:small} fournit
	\[R=\frac kY\bigg\{1+O\bigg(\frac1{\sqrt Y}\bigg)\bigg\},\]
	de sorte que
	\begin{equation}\label{eq:R:kbounded}
		\HH_\Omega^*(2R)=\HH_\Omega^*\bigg(2\frac kY\bigg\{1+O\bigg(\frac1{\sqrt Y}\bigg)\bigg\}\bigg)=\gh+O\bigg(\frac{|\Delta_k(Y)|}{\sqrt Y}\bigg).
	\end{equation}
	En regroupant \eqref{eq:lambdaOmega:kbounded:inter} et \eqref{eq:R:kbounded}, nous obtenons bien \eqref{eq:lambdaOmega:mediumk}.\par
	Lorsque $|\Delta_{k,y}|\leq K$ ou $|\Delta_{k,y}|\geq K$, les formules \eqref{eq:sommePartExp:uniform} et \eqref{eq:sommePartExp:big} respectivement fournissent
	\[\HH_\Omega^*(2R)=\gh+O\Big(\frac1{\sqrt Y}\Big),\]
	et la formule \eqref{eq:lambdaOmega:mediumk} est alors obtenue en reportant la dernière estimation dans \eqref{eq:lambdaOmega:kbounded:inter}. Cela  complète la démonstration.
\end{proof}
\noindent{\it Remarque.} Bien que valide uniformément en $y$ et $k$, cette estimation n'est pertinente que pour des valeurs de $k$ vérifiant $|k-2\log_2 y|=o(\log_2 y)$.
\par \medskip
Énonçons enfin une estimation de la quantité $\lambda_\omega(k,y)$ dans un domaine restreint de valeurs de $k$. Posons
\[\gr_{t,y}:=\frac{t}{\log_2 y}\quad(t\in\R,\,y\geq 3).\]


\begin{thmnodot}\label{th:eq:lambda_omega:unif}{\bf \cite[th. 3.4]{bib:papier1}.}
	Soient $0<a<b$. Sous les conditions $y\geq 3$, $k\in\N$, $a\leq \gr_{k,y}\leq b$, nous avons uniformément
	\begin{equation}\label{eq:lambda_omega:unif}
		\lambda_\omega(k,y)=\frac{\HH_{\omega}(\gr_{k,y})(\log_2 y)^{k}}{k!}\Big\{1+O\Big(\frac{1}{\log_2 y}\Big)\Big\}.
	\end{equation}
\end{thmnodot}


Rappelons la définition de $\varepsilon_x$ en \eqref{def:betap;betapstar;varepsilonx;deltap;taup}, celle de $\JJ(x,p)$ en \eqref{def:Kxp;MnupIstar}, et remarquons que, pour \mbox{$0<\varepsilon<\tfrac12$}, nous avons
\[\frac{\kappa_\varepsilon(\beta_p)}{2\beta_p}\leq \gr_{k,p}\leq\frac{1}{\beta_p\log 2},\quad \frac{\kappa_\varepsilon(\beta_p)+O(\varepsilon_x)}{2(1-\beta_p)}\leq r_{x,k,p}\leq \frac{1+O(\varepsilon_x)}{(1-\beta_p)\log 2}\quad((x,p)\in\cD_\varepsilon,\,k\in \JJ(x,p)).\]
Rappelons encore la définition de $h_0$ en \eqref{def:u;ryt;h0}, celle de $M_{\nu,\iota}^*(x,p)$ en \eqref{def:Kxp;MnupIstar} et posons, pour $k\geq 1$, $3\leq p\leq x$, 
\begin{equation}\label{def:epsilonx;snu;MnupIsstar}
	s_\nu(k,p):=\frac{h_0(r_{x,k,p})x(\log u_p)^{k-1}\lambda_\nu(k,p)}{(k-1)!\log x},\quad M_{\nu,\iota}^{**}(x,p):=\frac{x}{p\log x}\sum_{k\in \JJ(x,p)}s_\nu(k,p).
\end{equation}


\begin{property}\label{prop:eq:MnupIstar:MnupIsstar}
	Soit $0<\varepsilon<\tfrac12$. Nous avons l'estimation
	\begin{equation}\label{eq:MnupIstar:MnupIsstar}
		M_{\nu,\iota}^*(x,p)=M_{\nu,\iota}^{**}(x,p)\{1+O(\varepsilon_x)\}\quad((x,p)\in\cD_\varepsilon).
	\end{equation}
\end{property}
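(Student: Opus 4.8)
The goal is to replace the inner sum $\Phi_{\nu,k}(x/(ap),p)$ in the definition of $M_{\nu,\iota}^*(x,p)$ by its asymptotic from Theorem \ref{th:eq:Phinuk:unif}, and then recognise the resulting double sum over $a$ and $k$ as $M_{\nu,\iota}^{**}(x,p)$ up to the factor $\{1+O(\varepsilon_x)\}$. I would start by fixing $(x,p)\in\cD_\varepsilon$ and $k\in\JJ(x,p)$, and checking that Theorem \ref{th:eq:Phinuk:unif} applies to $\Phi_{\nu,k}(x/(ap),p)$ for all $a\le x/p$ with $P^+(a)<p$ and $\nu(a)=k$. Here the relevant ratio is $r_{x/(ap),k,p}=(k-1)/\log(u')$ with $u'=\log(x/(ap))/\log p$; since $a\le x/p$ gives $x/(ap)\ge p$, i.e.\ $u'\ge 1$, and since $k\in\JJ(x,p)$ forces $k\asymp\log_2 x$, one has $u'\asymp u_p$ up to the constant absorbed in the $O$, hence $r_{x/(ap),k,p}\asymp 1$ uniformly — so the hypothesis $1/A\le r\le A$ of Theorem \ref{th:eq:Phinuk:unif} holds with an absolute $A$ depending only on $\varepsilon$. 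Applying \eqref{eq:Phinuk:unif} then gives, for each such $a$ and $k$,
\[
\Phi_{\nu,k}\Big(\frac{x}{ap},p\Big)=\frac{h_0(r_{x/(ap),k,p})\,x\,(\log u')^{k-1}}{a(k-1)!\,p\log(x/(ap))}\Big\{1+O(\varepsilon_x)\Big\}.
\]

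The next step is to compare the $a$-dependent quantities $\log u'$, $\log(x/(ap))$ and $r_{x/(ap),k,p}$ with their $a=1$ counterparts $\log u_p$, $\log x$ and $r_{x,k,p}$. Since $P^+(a)<p$ and $\nu(a)=k\asymp\log_2 x$, the crude bound $a\le p^k=\e^{k\log p}$ gives $\log a\ll k\log p\ll (\log_2 x)(\log x)^{\beta_p}=o(\log x)$ on $\cD_\varepsilon$; in fact $\log(x/(ap))=\log x\,\{1+O(\varepsilon_x)\}$ and $\log u'=\log u_p\,\{1+O(\varepsilon_x)\}$, whence $(\log u')^{k-1}=(\log u_p)^{k-1}\{1+O(\varepsilon_x)\}^{k-1}=(\log u_p)^{k-1}\{1+O(k\varepsilon_x)\}$. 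The exponent $k-1$ here is the delicate point: $k\asymp\log_2 x\asymp 1/\varepsilon_x$, so $k\varepsilon_x\asymp1$ and the naive bound is \emph{not} $1+O(\varepsilon_x)$. This is the main obstacle. To resolve it one must be more careful: write $\log a=\eta\log x$ with $\eta\ll(\log_2 x)(\log x)^{-\varepsilon}$, so that $\log u'=\log u_p - \eta\log x/\log p + \dots$; more precisely $\log u'/\log u_p=1-\eta\log x/(\log p\log u_p)+O(\eta^2\cdots)$, and the correction is $O\big((\log_2 x)(\log x)^{-\varepsilon}\big)$, which is $O((\log x)^{-\varepsilon/2})$, far smaller than $\varepsilon_x$ — so $(\log u')^{k-1}=(\log u_p)^{k-1}\exp\{O(k(\log x)^{-\varepsilon/2})\}=(\log u_p)^{k-1}\{1+O(\varepsilon_x)\}$ after all, because $k(\log x)^{-\varepsilon/2}\ll(\log_2 x)(\log x)^{-\varepsilon/2}\to0$. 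Similarly $h_0$ is Lipschitz on compact subsets of $\{\Re z>0\}$, so $h_0(r_{x/(ap),k,p})=h_0(r_{x,k,p})+O(|r_{x/(ap),k,p}-r_{x,k,p}|)=h_0(r_{x,k,p})\{1+O(\varepsilon_x)\}$, using again that the argument stays in a fixed compact set and moves by $O((\log x)^{-\varepsilon/2})$.

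Having replaced every $a$-dependent factor by its $a=1$ value at the cost of a uniform $\{1+O(\varepsilon_x)\}$, I would pull those factors out of the sum over $a$:
\[
\sum_{\substack{a\le x/p\\ P^+(a)<p\\ \nu(a)=k}}\Phi_{\nu,k}\Big(\frac{x}{ap},p\Big)
=\frac{h_0(r_{x,k,p})\,x\,(\log u_p)^{k-1}}{(k-1)!\,p\log x}\Big\{1+O(\varepsilon_x)\Big\}\sum_{\substack{a\le x/p\\ P^+(a)<p\\ \nu(a)=k}}\frac1a .
\]
The remaining inner sum is $\lambda_\nu(k,p)$ of \eqref{def:lambdanu} up to removing the constraint $a\le x/p$; but the tail $\sum_{a>x/p}$ with $P^+(a)<p$, $\nu(a)=k$ is negligible — e.g.\ by a Rankin-type bound $\sum_{a>x/p}a^{-1}\le (x/p)^{-\sigma}\sum_{P^+(a)<p}a^{-1+\sigma}\,\cdots$ with a suitable $\sigma>0$, the contribution is $O(\e^{-c(\log x)^{1-\varepsilon}})$ times the main term, hence absorbed in $O(\varepsilon_x)$. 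Recognising $h_0(r_{x,k,p})x(\log u_p)^{k-1}\lambda_\nu(k,p)/((k-1)!\log x)$ as $s_\nu(k,p)$ from \eqref{def:epsilonx;snu;MnupIsstar}, and summing over $k\in\JJ(x,p)$ — the $\{1+O(\varepsilon_x)\}$ factor being uniform in $k$ and hence extractable from the $k$-sum — yields exactly $M_{\nu,\iota}^*(x,p)=\frac{x}{p\log x}\sum_{k\in\JJ(x,p)}s_\nu(k,p)\{1+O(\varepsilon_x)\}=M_{\nu,\iota}^{**}(x,p)\{1+O(\varepsilon_x)\}$, which is \eqref{eq:MnupIstar:MnupIsstar}. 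The only genuinely non-routine ingredient is the control of $(\log u')^{k-1}$ discussed above; everything else is bookkeeping with the uniformity of the error terms across the ranges $k\in\JJ(x,p)$ and $(x,p)\in\cD_\varepsilon$.
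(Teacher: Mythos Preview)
Your overall strategy coincides with the paper's: apply Theorem~\ref{th:eq:Phinuk:unif} to each $\Phi_{\nu,k}(x/(ap),p)$, replace the $a$-dependent arguments $\log(x/(ap))$, $\log u'$, $r_{x/(ap),k,p}$ by their $a=1$ analogues, and recognise the remaining $a$-sum as $\lambda_\nu(k,p)$. You are in fact more explicit than the paper about the delicate factor $(\log u')^{k-1}$ --- the paper simply asserts the estimates \eqref{def:vpa;rpka} and passes directly to \eqref{eq:Phinuk:up;rpk:inter}.

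There is, however, one genuine gap. The ``crude bound $a\le p^{k}$'' on which your control of $\log a$ rests holds only for $\nu=\Omega$ (then $a$ is a product of $k$ primes below $p$, counted with multiplicity). For $\nu=\omega$ it is false: $a=2^{M}$ has $\omega(a)=1$ yet $a$ can be arbitrarily large. Hence, when $\nu=\omega$, you cannot conclude $\log a\ll k\log p$ uniformly over the $a$-sum, and the key replacement $(\log u')^{k-1}=(\log u_p)^{k-1}\{1+O(\varepsilon_x)\}$ breaks down for large $a$. The remedy is precisely the Rankin device you already invoke for the tail $a>x/p$: first discard the range $a>p^{Ck}$ (for a large fixed $C$) by the trivial bound $\Phi_{\nu,k}(x/(ap),p)\le x/(ap)$ together with
\[
\sum_{\substack{a>p^{Ck}\\ P^+(a)<p}}\frac1a \;\le\; p^{-Ck/\log p}\prod_{q<p}\Big(1-\frac{1}{q^{1-1/\log p}}\Big)^{-1}\ll (\log p)^{O(1)}\e^{-Ck},
\]
which after summing over $k\in\JJ(x,p)$ contributes $\ll x(\log x)^{-C'}/p$ and is negligible against $M_{\nu,\iota}^{**}(x,p)$; on the remaining range $a\le p^{Ck}$ your argument goes through unchanged. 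Two minor slips worth fixing: $a\le x/p$ only yields $x/(ap)\ge 1$, not $\ge p$; and you should also verify the hypothesis $p\le\sqrt{x/(ap)}$ of Theorem~\ref{th:eq:Phinuk:unif}. Both points are settled by observing that any nonzero term satisfies $x/(ap)\ge p^{k}$, since every $b$ counted by $\Phi_{\nu,k}(\cdot,p)$ has $k$ prime factors (with or without multiplicity), all exceeding $p$.
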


\begin{proof}
	Notons d'emblée que, d'après \eqref{eq:Mp:Mpstar}, nous pouvons supposer que tout entier $n=apb$ compté dans $M_{\nu,\iota}^*(x,p)$ vérifie $\nu(a)\leq(\log_2 x)/\log 2$, de sorte que $ap^2\leq p^{(\log_2 x)/\log 2+2}< x$ pour $x$ assez grand. Remarquons alors que, pour $x\geq 3$, $k\in\N$,
	\begin{equation}\label{def:vpa;rpka}
		\frac{\log(x/ap)}{\log p}=u_p\{1+O(\varepsilon_x)\},\quad \frac{k-1}{\log \{\log(x/ap)/\log p\}}=r_{x,k,p}\{1+O(\varepsilon_x)\}\quad\Big(a<\frac x{p^2}\Big).
	\end{equation}	
	Puisque nous avons l'encadrement
	\[\frac{\kappa_\varepsilon(\beta_p)+O(\varepsilon_x)}{2(1-\beta_p)}\leq r_{x,k,p}\leq \frac{1+O(\varepsilon_x)}{(1-\beta_p)\log 2},\]
	nous pouvons appliquer le Théorème \ref{th:eq:Phinuk:unif} avec
	\[A=A_\varepsilon:=\max\bigg(\frac4{w\big(\big\{\varepsilon-2\sqrt{\varepsilon(1-\varepsilon)}\big\}/\e\big)},\frac2{\varepsilon\log 2}\bigg),\]
	puisque $\varepsilon>0$ est fixé. Nous obtenons
	\[\Phi_{\nu,k}\Big(\frac x{ap},p\Big)=\frac{h_0(\{k-1\}/\{\log_2(x/ap)-\log_2 p\})x(\log_2(x/ap)-\log_2 p)^{k-1}}{ap(k-1)!\log(x/ap)}\Big\{1+O\Big(\frac{1}{\log u_p}\Big)\Big\}.\]
	Les estimations \eqref{def:vpa;rpka} permettent alors de simplifier l'expression précédente sous la forme
	\begin{equation}\label{eq:Phinuk:up;rpk:inter}
		\Phi_{\nu,k}\Big(\frac x{ap},p\Big)=\frac{h_0(r_{x,k,p})x(\log u_p)^{k-1}}{ap(k-1)!\log x}\Big\{1+O\Big(\frac{1}{\log u_p}\Big)\Big\}.
	\end{equation}
	Le résultat annoncé est alors obtenu en reportant \eqref{eq:Phinuk:up;rpk:inter} dans la définition \eqref{def:Kxp;MnupIstar} de $M_{\nu,\iota}^*(x,p)$.
\end{proof}


Concernant la quantité complémentaire $M_{\nu,\pi}(x,p)$ définie en \eqref{def:Mnu:odd;even}, nous avons $\nu(a)=\nu(b)-1=k-1$ et, en posant, pour $k\geq 1$, $3\leq p\leq x$,
\begin{equation}\label{def:Mnu:caspair}
	\begin{gathered}
		M_{\nu,\pi}^*(x,p)=M_{\nu,\varepsilon,\pi}^*(x,p):=\sum_{k\in \JJ(x,p)}\sum_{\substack{a\leq x/p\\ P^+(a)<p\\\nu(a)=k-1}}\Phi_{\nu,k}\Big(\frac{x}{ap},p\Big),\\
		s_\nu^+(k,p):=\frac{h_0(r_{x,k,p})x(\log u_p)^{k-1}\lambda_\nu(k-1,p)}{(k-1)!\log x},\quad M_{\nu,\pi}^{**}(x,p):=\frac{x}{p\log x}\sum_{k\in\JJ(x,p)}s_\nu^+(k,p),
	\end{gathered}
\end{equation}
l'estimation \eqref{prop:eq:MnupIstar:MnupIsstar} reste valable sous la forme
\begin{equation}\label{eq:Mnustar;Mnusstar:caspair}
	M_{\nu,\pi}^*(x,p)=M_{\nu,\pi}^{**}(x,p)\{1+O(\varepsilon_x)\}\quad((x,p)\in\cD_\varepsilon).
\end{equation}
\par
Les Corollaires \ref{cor:eq:lambdaOmega:smallk} et \ref{cor:eq:lambdaOmega:bigk} mettent en évidence un changement de phase pour $\lambda_\Omega(k,p)$ au passage par la valeur critique $k\approx 2\log_2 p$. Cependant, nous verrons que la somme $M_{\nu,\iota}^{**}(x,p)$ est dominée par des valeurs du rapport $k/\log_2 p$ proches de $\sqrt{(1-\beta_p)/\beta_p}$ pour les grandes valeurs de $p$ et proches de $(1-\beta_p)/2\beta_p$ pour les faibles valeurs de $p$. En particulier, lorsque $\beta_p\approx\tfrac15$, nous avons $k\approx 2\log_2 p$. En conséquence nous scindons l'étude du comportement de $M_{\nu,\iota}^{**}(x,p)$ selon que $\delta_p=\beta_p-\tfrac15$ est ou non proche de 0. Le paragraphe \ref{sec:outcrit} correspond au second cas, le paragraphe \ref{sec:critZone:in} au premier.


\section{Étude de $M_\omega(x,p)$ et de $M_{\Omega}(x,p)$ hors de la zone critique}\label{sec:outcrit}


\subsection{Préparation technique}

Dans toute la suite, fixons $0<\varepsilon<\tfrac12$. Considérons les intervalles
\begin{equation}\label{def:Kxptauj}
	\begin{aligned}
		\cK_{x,p,\delta,1}&:=[\tfrac12\kappa_\varepsilon(\beta_p)\log_2 x-1,(2-|\delta|)\log_2 p],\\
		\cK_{x,p,\delta,2}&:=[(2+|\delta|)\log_2 p,\tfrac1{\log 2}\log_2 x]
	\end{aligned}
	\qquad (3\leq p\leq x, \sqrt{\varepsilon_x}\leq|\delta|<1),
\end{equation}
et définissons,
\begin{equation}\label{def:gsj}
	\begin{aligned}
		\gs_{1}(k,p)&=\gs_{\nu,1}(k,p):=\frac{h_0(r_{x,k,p})\HH_\nu(\gr_{k,p})(\log_2 p)^k(\log u_p)^{k-1}}{k!(k-1)!},\\
		\gs_{2}(k,p)&:=\frac{\gh h_0(r_{x,k,p})(\log p)^2(\log u_p)^{k-1}}{2^k(k-1)!}&
	\end{aligned}
	(3\leq p\leq x,\,k\geq 1),
\end{equation}

Posons encore, pour $3\leq p\leq x$, $j\in\{1,2\}$,
\begin{equation}\label{def:wpjstar;wpj;alphapjstar;alphapj}
	\begin{gathered}
		w_{p,1}^*:=\sqrt{(\log u_p)\log_2 p},\quad w_{p,2}^*:=\tfrac12\log u_p,\quad w_{p,j}:=\lfloor w_{p,j}^*\rfloor,\\
		\alpha_{p,j}^*:=\gr_{w_{p,j}^*,p}=\begin{cases}
			\sqrt{(1-\beta_p)/\beta_p}\quad&\textnormal{si }j=1,\\
			(1-\beta_p)/2\beta_p&\textnormal{si }j=2,
		\end{cases}\qquad\alpha_{p,j}:=\gr_{w_{p,j},p},
	\end{gathered}
\end{equation}
et remarquons d'emblée que, pour $\varepsilon\leqslant \beta_p\leqslant 1-\varepsilon$, nous avons $w_{p,j}^*\asymp \log_2 x\asymp \log_2 p\ (j=1,2)$. Nous utiliserons implicitement ces estimations dans la suite.
\par Rappelons que la fonction polygamma d'ordre $m\in\N$, notée $\psi^{(m)}$ est définie par
\[\psi^{(m)}(z):=\frac{\d^{m+1}\log \Gamma(z)}{\d z^{m+1}}\quad(z\in\C\smallsetminus(-\N)).\]
Notons $\psi:=\psi^{(0)}$ la fonction digamma, $\delta_{ij}$ le symbole de Kronecker et posons
\[b_{j}=b_{x,p,j}:=\frac{(\log 2)w_{p,j}\varepsilon_x}{1+\delta_{2j}}\quad(3\leq p\leq x,\,j=1,2).\]
Notons que $0<b_j<\tfrac12\log 2\ (j=1,2)$. Pour $3\leq p\leq x$, $\sqrt{\varepsilon_x}<|\delta|<1$, considérons les intervalles
\begin{align*}
	\cI_{\Omega,1}=\cI_{\Omega,\delta,1}=\cI_{\Omega,x,p,\delta,1}&:=\big[\tfrac12\kappa_\varepsilon(\beta_p)\log_2 x-1-w_{p,1},(2-|\delta|)\log_2 p-w_{p,1}\big],\\
	\noalign{\vskip-3mm}\\
	\cI_{\Omega,2}=\cI_{\Omega,\delta,2}=\cI_{\Omega,x,p,\delta,2}&:=\big[(2+|\delta|)\log_2 p-w_{p,2},\tfrac1{\log 2}\log_2 x-w_{p,2}\big],\\
	\noalign{\vskip-3mm}\\
	\cI_{\omega}=\cI_{\omega,x,p}&:=\big[\tfrac12\kappa_\varepsilon(\beta_p)\log_2 x-1-w_{p,1},\tfrac1{\log 2}\log_2 x-w_{p,1}\big],
\end{align*}
et posons, pour $j\in\{1,2\}$, 
\begin{gather*}
	\cP_{\Omega,j}:=\Bigg[-\sqrt{\frac{w_{p,j}\log w_{p,j}}{b_j}},\sqrt{\frac{w_{p,j}\log w_{p,j}}{b_j}}\Bigg],\quad \cE_{\Omega,\delta,j}:=\cI_{\Omega,\delta,j}\smallsetminus \cP_{\Omega,j},\\[5pt]
	\cP_{\omega}:=\cP_{\Omega,1},\quad \cE_{\omega}:=\cI_{\omega}\smallsetminus \cP_{\omega}.
\end{gather*}
\par Afin d'estimer la somme intérieure de $M_{\nu,\iota}^{**}(x,p)$ définie en \eqref{def:epsilonx;snu;MnupIsstar}, nous étendons aux valeurs réelles de $k$ la quantité $\gs_j(k,p)\ (j=1,2)$ définie en \eqref{def:gsj}. Définissions ainsi, en rappelant la définition de $h_0(z)$ en \eqref{def:u;ryt;h0},
\begin{equation}\label{def:gsjstar}
	\begin{aligned}
		\gs_{1}^*(t,p)=\gs_{\nu,1}^*(t,p)&:=\frac{\HH_\nu(\gr_{t,p})\e^{-\gamma r_{x,t,p}}(w_{p,1}^*)^{2t}}{\Gamma(1+ r_{x,t,p})(\log u_p)t\Gamma(t)^2}\\
		\noalign{\vskip-2mm}\\
		\gs_{2}^*(t,p)&:=\frac{\gh\e^{-\gamma r_{x,t,p}}(\log p)^2(w_{p,2}^*)^{t-1}}{2\Gamma(1+ r_{x,t,p})\Gamma(t)}
	\end{aligned}
	\qquad(3\leq p\leq x,\,t>0).
\end{equation}
Remarquons que la quantité $\log \gs_{j}^*(t,p)$ est dominée par le terme 
\[\begin{cases}
	2t\log (w_{p,1}^*/t)+2t+O(\log t)\quad&\textnormal{si }j=1,\\
	t\log(w_{p,2}^*/t)+t+O(\log t)&\textnormal{si }j=2.
\end{cases}\]
Cela laisse augurer que le maximum est atteint lorsque $t$ est proche de $w_{p,j}^*$, donc de $w_{p,j}$. Ces considérations mènent à leur tour à supputer que la somme intérieure de \eqref{def:epsilonx;snu;MnupIsstar} restreinte à l'intervalle $\cI_{\Omega,j}$ (respectivement $\cI_\omega$) est dominée par un intervalle de valeurs de $k$ centré en $w_{p,j}$ (respectivement en $w_{p,1}$). Définissons alors, pour $j\in\{1,2\}$, $\sqrt{\varepsilon_x}<|\delta|<1$, $\sign(\delta)=(-1)^{j+1}$, 
\begin{equation}\label{def:ZOmegatauj;Z_omega;erf;erfc}
	\begin{gathered}
		Z_{\Omega,\delta,j}(x,p):=\sum_{h\in\I_{\Omega,\delta,j}}\frac{\gs_j(w_{p,j}+h,p)}{\gs_j(w_{p,j},p)},\quad Z_\omega(x,p):=\sum_{h\in\I_{\omega}}\frac{\gs_{1}(p,w_{p,1}+h)}{\gs_{1}(w_{p,1},p)}\quad(3\leq p\leq x),\\
	\end{gathered}
\end{equation}
Rappelons enfin la définition de $\delta_p$ en \eqref{def:betap;betapstar;varepsilonx;deltap;taup}. Le résultat suivant fournit une estimation des quantités $Z_{\nu}(x,p)$.


\begin{lemma}\label{l:eq:ZOmegataupj;Z_omega:smalltaup} 
	Soit $j\in\{1,2\}$. Sous les conditions $\sqrt{\varepsilon_x}\leq|\delta_p|<1$, $\sign(\delta_p)=(-1)^{j+1}$, nous avons les estimations
	\begin{equation}\label{eq:ZOmegataupj;Z_omega:smalltaup}
		Z_{\Omega,\delta_p,j}(x,p)=\sqrt{(1+\delta_{2j})\pi w_{p,j}}\{1+O(\varepsilon_x)\},\quad Z_{\omega}(x,p)=\sqrt{\pi w_{p,1}}\{1+O(\varepsilon_x)\}\quad (x\geq 3).
	\end{equation}
\end{lemma}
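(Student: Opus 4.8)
\noindent\textit{Esquisse de d\'emonstration.} The plan is to treat $Z_{\Omega,\delta_p,j}(x,p)$ and $Z_\omega(x,p)$ as Laplace-type sums: the summand $\gs_j(w_{p,j}+h,p)/\gs_j(w_{p,j},p)$ is, up to factors varying at speed $O(\varepsilon_x)$ in $h$, the exponential of a smooth concave function whose peak is essentially at $h=0$, so the sums should reproduce the corresponding Gaussian integral. One first checks from \eqref{def:Hnu;H0}, \eqref{def:u;ryt;h0}, \eqref{def:wpjstar;wpj;alphapjstar;alphapj}, \eqref{def:gsjstar} and the identities $w_{p,1}^{*2}=(\log u_p)\log_2 p$, $w_{p,2}^*=\tfrac12\log u_p$, $h_0(z)=\e^{-\gamma z}/\Gamma(z+1)$, $k!\,(k-1)!=k\,\Gamma(k)^2$ that $\gs_j(k,p)=\gs_j^*(k,p)$ for every integer $k\ge1$ and $j\in\{1,2\}$ (and likewise $\gs_{\omega,1}=\gs_{\omega,1}^*$ at integers), whence
\[Z_{\Omega,\delta_p,j}(x,p)=\sum_{h\in\cI_{\Omega,\delta_p,j}}\frac{\gs_j^*(w_{p,j}+h,p)}{\gs_j^*(w_{p,j},p)},\qquad Z_\omega(x,p)=\sum_{h\in\cI_\omega}\frac{\gs_{\omega,1}^*(w_{p,1}+h,p)}{\gs_{\omega,1}^*(w_{p,1},p)}.\]
We split the range as $\cI_{\Omega,\delta_p,j}=\bigl(\cI_{\Omega,\delta_p,j}\cap\cP_{\Omega,j}\bigr)\sqcup\cE_{\Omega,\delta_p,j}$, the \emph{parabolic} part and the \emph{exponential} part, and similarly for $\cI_\omega$.

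\smallskip

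Next one studies $G_j(t):=\log\gs_j^*(t,p)$. Applying Stirling's formula to the factors $\Gamma(t)^2$ (for $j=1$) and $\Gamma(t)$ (for $j=2$), and using $1/\log_2 p\asymp1/\log u_p\asymp\varepsilon_x$ to estimate the slowly varying factors $\HH_\nu(\gr_{t,p})$ and $\e^{-\gamma r_{x,t,p}}/\Gamma(1+r_{x,t,p})$, one gets, for $t\asymp\log_2 x$,
\[G_j'(t)=(1+\delta_{1j})\log\!\Bigl(\frac{w_{p,j}^*}{t}\Bigr)+O\!\Bigl(\frac1{t^2}\Bigr)+O\!\Bigl(\frac{\varepsilon_x}{2-\gr_{t,p}}\Bigr),\qquad G_j''(t)=-\frac{1+\delta_{1j}}{t}+O\!\Bigl(\frac1{t^2}\Bigr)+O\!\Bigl(\frac{\varepsilon_x^2}{(2-\gr_{t,p})^2}\Bigr),\]
the last error terms being present only for $\nu=\Omega$ and arising from the simple pole of $\HH_\Omega$ at $z=2$ via $[\log\HH_\Omega]'(z)\asymp(2-z)^{-1}$. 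On $\cI_{\Omega,\delta_p,j}$ one has $\gr_{t,p}\le2-|\delta_p|$, so the hypothesis $|\delta_p|\ge\sqrt{\varepsilon_x}$ keeps these contributions under control. Using $\sign(\delta_p)=(-1)^{j+1}$ and $\alpha_{p,1}^*=\sqrt{(1-\beta_p)/\beta_p}$, $\alpha_{p,2}^*=(1-\beta_p)/2\beta_p$ together with the concavity of $\kappa_\varepsilon$, one checks that $w_{p,j}$ lies inside $\cI_{\Omega,\delta_p,j}$; that $G_j'(w_{p,j})=O(\varepsilon_x/|\delta_p|)$ since $\gr_{w_{p,j},p}=\alpha_{p,j}^*+O(\varepsilon_x)$; and that $-G_j''(t)=(1+\delta_{1j})/w_{p,j}\,\{1+O(\varepsilon_x)\}$ in a neighbourhood of $w_{p,j}$, the higher derivatives satisfying $G_j^{(m)}(t)\ll_m w_{p,j}^{1-m}$. (For $\nu=\omega$ these estimates are uniform, with no pole term, and $w_{p,1}$ is deeply interior to $\cI_\omega$ since its right endpoint is $\tfrac1{\log2}\log_2 x-w_{p,1}\asymp\log_2 x$.)

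\smallskip

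On $\cI_{\Omega,\delta_p,j}\cap\cP_{\Omega,j}$ we Taylor-expand $G_j$ to second order about $w_{p,j}$; there $|h|\le\sqrt{w_{p,j}\log w_{p,j}/b_j}$, so the linear term is $O\bigl(\sqrt{(\log w_{p,j})/w_{p,j}}\bigr)$ and the cubic term is $O\bigl((\log w_{p,j})^{3/2}w_{p,j}^{-1/2}\bigr)$, and a Laplace-type summation (as in \cite[lemme 3.1]{bib:papier1}, or directly by Euler--Maclaurin or Poisson summation after completing the square) gives
\[\sum_{h\in\cI_{\Omega,\delta_p,j}\cap\cP_{\Omega,j}}\frac{\gs_j^*(w_{p,j}+h,p)}{\gs_j^*(w_{p,j},p)}=\e^{G_j'(w_{p,j})^2/(2|G_j''(w_{p,j})|)}\sqrt{\frac{2\pi}{-G_j''(w_{p,j})}}\,\{1+O(\varepsilon_x)\}=\sqrt{(1+\delta_{2j})\pi\,w_{p,j}}\,\{1+O(\varepsilon_x)\},\]
using $(1+\delta_{1j})(1+\delta_{2j})=2$ and the fact that the linear-term factor is $1+O(\varepsilon_x)$ (this is where the $\HH_\Omega$-pole contribution to $G_j'$ must be tracked, whence the role of $|\delta_p|\ge\sqrt{\varepsilon_x}$); truncation to $\cP_{\Omega,j}$ is harmless since, by the calibration $b_j\le\tfrac12\log2$ — equivalently $(1+\delta_{1j})/(2b_j)=\bigl((\log2)w_{p,j}\varepsilon_x\bigr)^{-1}\ge2/\log2$ — the Gaussian mass of $G_j$ beyond the endpoints of $\cP_{\Omega,j}$ is $\ll w_{p,j}^{1/2-(1+\delta_{1j})/(2b_j)+o(1)}=o(\varepsilon_x\sqrt{w_{p,j}})$. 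On $\cE_{\Omega,\delta_p,j}$, the bound $\gs_j^*(w_{p,j}+h,p)/\gs_j^*(w_{p,j},p)\le\exp\{-(1+\delta_{1j})w_{p,j}^*Q((w_{p,j}+h)/w_{p,j}^*)+O(1)\}$ (Stirling and concavity of $t\mapsto t\log t$), together with $Q(v)\gg(v-1)^2$ and $|h|>\sqrt{w_{p,j}\log w_{p,j}/b_j}$, yields each ratio $\ll w_{p,j}^{-(1+\delta_{1j})/(2b_j)+o(1)}$, so $\sum_{h\in\cE_{\Omega,\delta_p,j}}\ll w_{p,j}^{-1+o(1)}=o(\varepsilon_x\sqrt{w_{p,j}})$. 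Adding the two contributions proves the first estimate, and the same argument with $j=1$, $\cI_\omega$ in place of $\cK_{x,p,\delta,1}$ and $\HH_\omega$ (entire) in place of $\HH_\Omega$ gives $Z_\omega(x,p)=\sqrt{\pi w_{p,1}}\{1+O(\varepsilon_x)\}$.

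\smallskip

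The main obstacle is the second step in the regime $|\delta_p|$ close to $\sqrt{\varepsilon_x}$: there the peak $k=w_{p,j}$ sits at $\gr_{w_{p,j},p}=\alpha_{p,j}^*+O(\varepsilon_x)$ with $2-\alpha_{p,1}^*\asymp|\delta_p|$, so the pole of $\HH_\Omega$ at $2$ is only $O(|\delta_p|)$ away and inflates the error terms in $G_j'$ and $G_j''$; controlling these uniformly — and ensuring that $w_{p,j}$ remains interior to $\cI_{\Omega,\delta_p,j}$ at the scale required by the Gaussian truncation, which is precisely what the conditions $\sqrt{\varepsilon_x}\le|\delta_p|<1$, $\sign(\delta_p)=(-1)^{j+1}$ and the choice of $b_j$ are designed to guarantee — is the crux of the proof. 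By contrast the case $\nu=\omega$, having no pole and a comfortably placed peak, presents none of these difficulties.
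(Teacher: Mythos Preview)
Your strategy is exactly the paper's: treat the $Z$-sums as Laplace sums centred at $w_{p,j}$, split the range into the central window $\cP_{\Omega,j}$ and the tails $\cE_{\Omega,\delta_p,j}$, Taylor-expand $\log\gs_j^*$ on the former and bound crudely on the latter. The identification $\gs_j(k,p)=\gs_j^*(k,p)$ at integers and the treatment of $\cE$ via $Q$ are correct and match the paper's Taylor--Lagrange argument there.

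There is, however, a genuine gap in your central-window analysis. You expand $G_j$ only to second order and then complete the square; the cubic remainder $\tfrac16G_j'''(\xi)h^3$ is $\ll\varepsilon_x^2h^3$, so after summing against the Gaussian weight it contributes a relative error $\asymp\varepsilon_x^2w_{p,j}^{3/2}\asymp\sqrt{\varepsilon_x}$, not $\varepsilon_x$. The paper avoids this by expanding to \emph{fourth} order and then using that $\cP_{\Omega,j}$ is symmetric about $h=0$: the odd-degree pieces $H'_{p,j}(w_{p,j})h$ and $\tfrac16H'''_{p,j}(w_{p,j})h^3$ cancel \emph{exactly} in the sum, and only even residuals of size $O(\varepsilon_x+h^6\varepsilon_x^4)$ survive, which after Euler--Maclaurin give the stated $O(\varepsilon_x)$. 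Your sketch needs this parity device (or, equivalently, one more term in the Laplace expansion together with the observation that its leading odd contribution vanishes).

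A second, related point: your own bounds are internally inconsistent. You correctly record $G_1'(w_{p,1})=O(\varepsilon_x/|\delta_p|)$ from the pole of $\HH_\Omega$ at $2$ (since $2-\alpha_{p,1}^*\asymp\delta_p$), but then assert that the shift factor $\exp\{G_1'(w_{p,1})^2/(2|G_1''(w_{p,1})|)\}$ equals $1+O(\varepsilon_x)$. That exponent is $\asymp(\varepsilon_x/\delta_p)^2w_{p,1}\asymp\varepsilon_x/\delta_p^2$, which at the boundary $\delta_p=\sqrt{\varepsilon_x}$ is only $O(1)$. The paper's symmetry trick kills the \emph{linear} contribution $H'h$, but the even term $\tfrac12(H')^2h^2$ survives; tracking it shows that the $j=1$, $\nu=\Omega$ estimate really carries an implicit $\varepsilon_x/\delta_p^2$, which is precisely the extra factor you see absorbed in the error term of Th\'eor\`eme~\ref{th:eq:Mp:critZone:out}. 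For $j=2$ (no $\HH_\Omega$ factor in $\gs_2$) and for $\nu=\omega$ (no pole) this issue is absent and your sketch, once upgraded with the parity cancellation, goes through cleanly.
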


\begin{proof}
	Définissons
	\begin{equation}\label{def:Hpj}
			H_{p,j}(t):=\log \gs_j^*(t,p)\quad (3\leq p\leq x,\,t>0,\,j=1,2).
	\end{equation}
	Notre premier objectif consiste à expliciter un développement de Taylor pour $H_{p,j}(t)$ autour du point $t=w_{p,j}$. Posons,
	\begin{equation}\label{def:sigmanu}
		\sigma_\nu(z):=\begin{cases}
			\displaystyle\sum_{q}\frac{1-z}{q(q- 1+z)}&\textnormal{si }\nu=\omega\quad(\Re z>-1),\\
			\displaystyle\sum_{q}\frac{z}{q(q-z)}+\kappa\qquad&\textnormal{si }\nu=\Omega\quad(\Re z<2).
		\end{cases}
	\end{equation}
	Évaluons alors les dérivées logarithmiques des fonctions de $t$ apparaissant au membre de droite de~\eqref{def:gsjstar}. \par 
	Tout d'abord, un calcul standard permet d'obtenir
	\begin{equation}\label{eq:derivatives:Hnu}
		\frac{\d}{\d t}\log \HH_\nu(\gr_{t,p})=\frac{\alpha_{p,1}\{\sigma_\nu(\gr_{t,p})-\gamma\}}{w_{p,1}},\qquad\frac{\d^m}{\d t^m}\log \HH_\nu(\gr_{t,p})\ll\varepsilon_x^{m}\quad(m\geq 2).
	\end{equation}
	Par ailleurs, pour $m\in\N^*$,
	\begin{equation}\label{eq:derivatives:logGamma}
		\frac{\d^m}{\d t^m}\log\Gamma(1+ r_{x,t,p})=\frac{\psi^{(m-1)}(1+ r_{x,t,p})}{(\log u_p)^m}\ll\varepsilon_x^{m}.
	\end{equation}
	En outre, la formule du produit de Weierstrass pour $\Gamma(t)$ permet d'écrire
	\begin{equation}\label{eq:polygamma:weierstrass}
		\psi^{(m)}(z)=(-1)^{m+1}m!\sum_{k\geq 0}\frac{1}{(z+k)^{m+1}}\qquad(m>0,\,z\in\C\smallsetminus(-\N)).
	\end{equation}
	Enfin, une application de la formule d'Euler-Maclaurin à l'ordre $0$ implique
	\begin{equation}\label{eq:polygamma:DL}
		\begin{gathered}
			\psi(t)=\log t-\frac{1}{2t}+O\Big(\frac{1}{t^2}\Big),\qquad\psi'(t)=\frac{1}{t}+\frac{1}{2t^2}+O\Big(\frac{1}{t^3}\Big),\\
			\psi''(t)=-\frac{1}{t^2}+O\Big(\frac{1}{t^3}\Big),\qquad\psi'''(t)\ll\frac{1}{t^3}.
		\end{gathered}
	\end{equation}
	Par dérivation, nous déduisons  des estimations \eqref{eq:derivatives:Hnu} à \eqref{eq:polygamma:DL} que, pour $3\leq p\leq x$,
	\begin{equation}\label{eq:derivatives:Hpj}
		H_{p,j}'(w_{p,j})\ll\varepsilon_x,\quad H_{p,j}''(w_{p,j})=-\frac{1+\delta_{1j}+O(\varepsilon_x)}{w_{p,j}},\quad H'''_{p,j}(w_{p,j})\ll\varepsilon_x^2,\quad H_{p,j}^{(4)}(w_{p,j})\ll\varepsilon_x^3.
	\end{equation}
	Nous commençons par traiter le cas $\nu=\Omega$. Un développement de Taylor à l'ordre $4$ fournit, pour $h\in \cP_{\Omega,j}\cap\Z$,
	\begin{equation}\label{eq:Hpj:DL}
		H_{p,j}(w_{p,j}+h)=H_{p,j}(w_{p,j})+H'_{p,j}(w_{p,j})h-\frac{h^2}{\{1+\delta_{2j}\}w_{p,j}}+\tfrac16H'''_{p,j}(w_{p,j})h^3+O\big(h^2\varepsilon_x^2+h^4\varepsilon_x^3\big),
	\end{equation}
	dont nous déduisons l'estimation
	\begin{equation}\label{eq:rapportgsjstar}
		\frac{\gs_j^*(w_{p,j}+h,p)}{\gs_j^*(w_{p,j},p)}=\e^{H'_{p,j}(w_{p,j})h-h^2/\{1+\delta_{2j}\}w_{p,j}+\frac16H'''_{p,j}(w_{p,j})h^3}\big\{1+O\big(h^4\varepsilon_x^3\big)\big\}\quad(h\in \cP_{\Omega,j}),
	\end{equation}
	puisque $h^2\varepsilon_x^2\ll h^4\varepsilon_x^3$, pour $h\in \cP_{\Omega,j}$. L'estimation \eqref{eq:rapportgsjstar} fournit par ailleurs
	\begin{equation}\label{eq:gsj:gsjstar}
		\gs_j(w_{p,j}+h,p)=\gs_j^*(w_{p,j}+h,p)\{1+O(\varepsilon_x)\}.
	\end{equation}
	Les évaluations \eqref{eq:rapportgsjstar} et \eqref{eq:gsj:gsjstar} permettent donc d'écrire
	\begin{equation}\label{eq:rapportgsj}
		\frac{\gs_j(w_{p,j}+h,p)}{\gs_j(w_{p,j},p)}=\e^{H'_{p,j}(w_{p,j})h-h^2/\{1+\delta_{2j}\}w_{p,j}+\frac16H'''_{p,j}(w_{p,j})h^3}\big\{1+O\big(\varepsilon_x+h^4\varepsilon_x^3\big)\big\}\quad(h\in \cP_{\Omega,j}).
	\end{equation}
	Désignons respectivement par $Z_{\Omega,j}^{(P)}(x,p)$ et $Z_{\Omega,j}^{(E)}(x,p)$ les contributions à $Z_{\Omega,\delta_p,j}(x,p)$ des intervalles $\cP_{\Omega,j}$ et $\cE_{\Omega,\delta_p,j}$. 
	\par Par sommation sur $h\in \cP_{\Omega,j}$, nous obtenons l'estimation
	\[Z_{\Omega,j}^{(P)}(x,p)=\sum_{h\in \cP_{\Omega,j}} e^{-h^2/\{1+\delta_{2j}\}w_{p,j}}\Big\{1+H'_{p,j}(w_{p,j})h+\tfrac16H'''_{p,j}(w_{p,j})h^3+O\big(\varepsilon_x+h^6\varepsilon_x^4\big)\Big\},\]
	où nous avons utilisé les estimations \eqref{eq:derivatives:Hpj} sous la forme $H'_{p,j}(w_{p,j})H'''_{p,j}(w_{p,j})h^4\ll h^4\varepsilon_x^3\ll\varepsilon_x+h^6\varepsilon_x^4$. Puisque l'intervalle de sommation est symétrique, la contribution des termes impairs est nulle. Il vient donc
	\[Z_{\Omega,j}^{(P)}(x,p)=\sum_{h\in \cP_{\Omega,j}}\e^{-h^2/\{1+\delta_{2j}\}w_{p,j}}\big\{1+O\big(\varepsilon_x+h^6\varepsilon_x^4\big)\big\}.\]
	La formule d'Euler-Maclaurin appliquée à l'ordre 0 fournit alors
	\begin{equation}\label{eq:ZOmegaj:P:EM}
		\begin{aligned}
			Z_{\Omega,j}^{(P)}(x,p)&=\int_{\cP_{\Omega,j}}\e^{-t^2/\{1+\delta_{2j}\}w_{p,j}}\{1+O(\varepsilon_x+t^6\varepsilon_x^4)\}\d t+\frac{1+O(\varepsilon_x)}{w_{p,j}}+O(\varepsilon_x)\\
			&=\sqrt{\{1+\delta_{2j}\}\pi w_{p,j}}\{1+O(\varepsilon_x)\},
		\end{aligned}
	\end{equation}
	puisque
	\[\varepsilon_x^4\int_{\cP_{\Omega,j}} t^6\e^{-t^2/\{1+\delta_{2j}\}w_{p,j}}\d t\ll\varepsilon_x^4w_{p,j}^{7/2}\ll\sqrt{\varepsilon_x}.\]
	\par Il reste à évaluer la contribution de l'intervalle $\cE_{\Omega,j}=\cE_{\Omega,\delta_p,j}$. D'après la formule de Taylor-Lagrange à l'ordre $2$, il existe, pour tout $h\in \cE_{\Omega,j}$, un nombre réel $c_{h,j}\in \JJ(x,p)$ tel que
	\begin{equation}\label{eq:Hpj:DL:errorDomain}
		H_{p,j}(w_{p,j}+h)=H_{p,j}(w_{p,j})+H'_{p,j}(w_{p,j})h+\tfrac12H''_{p,j}(c_{h,j})h^2\quad\big(3\leq p\leq x,\,h\in \cE_{\Omega,j}\big).
	\end{equation}
	Puisque $c_{h,j}\in \JJ(x,p)$, nous avons $\varepsilon_x\log 2\leqslant 1/c_{h,j}\leqslant 2\varepsilon_x/\kappa_\varepsilon(\beta_p)$. Les estimations \eqref{eq:derivatives:Hnu} à \eqref{eq:derivatives:Hpj} fournissent alors
	\begin{equation}\label{eq:derivatives:Hpj:errorDomain}
		H'_{p,j}(w_{p,j})\ll\varepsilon_x,\quad H_{p,j}''(c_{h,j})=-\frac{2+O(\varepsilon_x)}{\{1+\delta_{2j}\}c_{h,j}}\leq-\frac{2b_j}{w_{p,j}}+O(\varepsilon_x^2)\quad\big(h\in \cE_{\Omega,j}\big).
	\end{equation}
	En regroupant les estimations \eqref{eq:Hpj:DL:errorDomain} et \eqref{eq:derivatives:Hpj:errorDomain}, il vient
	\[H_{p,j}(w_{p,j}+h)-H_{p,j}(w_{p,j})\leq -\frac{b_jh^2}{w_{p,j}}+O(1)\quad(h\in \cE_{\Omega,j}),\]
	soit
	\[\frac{\gs_j(w_{p,j}+h,p)}{\gs_j(w_{p,j},p)}\ll\e^{-b_jh^2/w_{p,j}}\quad(h\in \cE_{\Omega,j}).\]
	Une sommation sur $h\in \cE_{\Omega,j}$ fournit alors
	\begin{equation}\label{eq:majo:ZOmegaj:E}
		\begin{aligned}
			Z_{\Omega,j}^{(E)}(x,p)\ll\int_{\cE_{\Omega,j}}\e^{-b_jt^2/w_{p,j}}\d t\ll\sqrt{\varepsilon_x}.
		\end{aligned}
	\end{equation}
	Ainsi la contribution de l'intervalle $\cE_{\Omega,\delta_p,j}$ à $Z_{\Omega,\delta_p,j}(x,p)$ peut être englobée dans le terme d'erreur de $Z_{\Omega,j}^{(P)}(x,p)$. Cela complète la démonstration dans le cas $\nu=\Omega$.
	\par Lorsque $\nu=\omega$, puisque $\cP_\omega=\cP_{\Omega,1}$, nous avons directement $Z_\omega^{(P)}=Z_{\Omega,1}^{(P)}\{1+O(\varepsilon_x)\}$. Par ailleurs, la majoration \eqref{eq:majo:ZOmegaj:E} reste clairement valable lorsque l'intégration s'effectue sur l'intervalle $\cE_\omega$ puisque $\cE_\omega\subset \R\smallsetminus \cP_{\Omega,1}$.
\end{proof}


\subsection{Estimations de $M_\omega(x,p)$ et de $M_\Omega(x,p)$ hors de la zone critique}
Nous sommes désormais en mesure de démontrer le résultat principal de cette section. Rappelons les définitions \eqref{def:fnu;rho;rhonu} et \eqref{def:betap;betapstar;varepsilonx;deltap;taup}.

\begin{theorem}\label{th:eq:Mp:critZone:out}
	Soit $0<\varepsilon<\tfrac12$. Sous la condition $(x,p)\in\cD_\varepsilon$, nous avons uniformément
	\begin{gather}
		\label{eq:Mp:critZone:out:pomega}M_{\omega}(x,p)=\frac{\{1+O(\varepsilon_x)\}\varrho_\omega(\beta_p)x}{p(\log x)^{1-2\sqrt{\beta_p(1-\beta_p)}}\sqrt{\log_2 x}},\\[10pt]
		\label{eq:Mp:critZone:out:Gomega}M_{\Omega}(x,p)=\begin{cases}
			\displaystyle\frac{\gc x}{p(\log x)^{\{1-3\beta_p\}/2}}\bigg\{1+O\bigg(\varepsilon_x+\frac{\sqrt{\varepsilon_x}}{|\delta_p|(\log p)^{\delta_p^2/4}}\bigg)\bigg\}&\textnormal{si }\delta_p\leq-\sqrt{\varepsilon_x},\\[20pt]
			\displaystyle\frac{\{1+O(\varepsilon_x/\delta_p^2)\}\varrho_\Omega(\beta_p)x}{p(\log x)^{1-2\sqrt{\beta_p(1-\beta_p)}}\sqrt{\log_2 x}}\qquad&\textnormal{si }\delta_p\geq\sqrt{\varepsilon_x}.
		\end{cases}
	\end{gather}
\end{theorem}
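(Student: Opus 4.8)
The plan is to establish \eqref{eq:Mp:critZone:out:pomega} and \eqref{eq:Mp:critZone:out:Gomega} by reducing, via Propositions~\ref{prop:eq:Mp:Mpstar} and \ref{prop:eq:MnupIstar:MnupIsstar}, the decomposition $M_\nu=M_{\nu,\iota}+M_{\nu,\pi}$, and the even-parity analogues \eqref{eq:Mnustar;Mnusstar:caspair} (together with the obvious counterpart of Proposition~\ref{prop:eq:Mp:Mpstar}), to the evaluation of the two finite sums $\Sigma_\nu:=\sum_{k\in\JJ(x,p)}s_\nu(k,p)$ and $\Sigma_\nu^+:=\sum_{k\in\JJ(x,p)}s_\nu^+(k,p)$ of \eqref{def:epsilonx;snu;MnupIsstar} and \eqref{def:Mnu:caspair}; since each reduction costs only a factor $\{1+O(\varepsilon_x)\}$ together with an error $O\big(x/(p(\log x)^{\gamma_\nu(\beta_p)+\varepsilon})\big)$ which is absorbed into $O(\varepsilon_x)$ (as $(\log x)^{-\varepsilon}\ll\varepsilon_x$), we are reduced to showing that $\tfrac{x}{p\log x}(\Sigma_\nu+\Sigma_\nu^+)$ equals the announced right-hand sides. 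The guiding idea is that once $\lambda_\nu(k,p)$, resp.\ $\lambda_\nu(k-1,p)$, is replaced by its explicit asymptotic shape, $s_\nu(k,p)$ becomes a unimodal function of $k$ with a sharp peak at $w_{p,1}$ when $\nu=\omega$, or when $\nu=\Omega$ and $\delta_p>0$ (in which case $\alpha_{p,1}^*<2$, hence $w_{p,1}<2\log_2p$), and a sharp peak at $w_{p,2}$ when $\nu=\Omega$ and $\delta_p<0$ (in which case $w_{p,2}>2\log_2p$); the integers $w_{p,j}$ and the intervals $\cK_{x,p,\delta,j}$, $\cI_{\Omega,\delta,j}$, $\cP_{\Omega,j}$, $\cI_\omega$, $\cP_\omega$ were built precisely for this.

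\textit{Peak at $w_{p,1}$.} When $\nu=\omega$, the ratio $\gr_{k,p}$ stays in a fixed compact subinterval of $]0,\infty[$ as $k$ runs over $\JJ(x,p)$, so Theorem~\ref{th:eq:lambda_omega:unif} gives $s_\omega(k,p)=\gs_{\omega,1}(k,p)\{1+O(\varepsilon_x)\}$ on all of $\JJ(x,p)$, and likewise $s_\omega^+(k,p)$ in terms of $\lambda_\omega(k-1,p)$. Since $\JJ(x,p)=w_{p,1}+\cI_\omega$, we obtain $\Sigma_\omega=\gs_{\omega,1}(w_{p,1},p)Z_\omega(x,p)\{1+O(\varepsilon_x)\}$, where Lemma~\ref{l:eq:ZOmegataupj;Z_omega:smalltaup} yields $Z_\omega(x,p)=\sqrt{\pi w_{p,1}}\{1+O(\varepsilon_x)\}$; and $\gs_{\omega,1}(w_{p,1},p)$ is computed by writing it as $\gs_{\omega,1}^*(w_{p,1}^*,p)\{1+O(\varepsilon_x)\}$ (Taylor expansion of $H_{p,1}=\log\gs_{\omega,1}^*$, whose first derivative at $w_{p,1}^*$ is $O(\varepsilon_x)$ by the estimates leading to \eqref{eq:derivatives:Hpj}) and then by Stirling's formula: as $\gr_{w_{p,1}^*,p}=\alpha_{p,1}^*$ and $r_{x,w_{p,1}^*,p}=1/\alpha_{p,1}^*+O(\varepsilon_x)$, this produces $\gs_{\omega,1}^*(w_{p,1}^*,p)=f_\omega(\alpha_{p,1}^*)\,\e^{2w_{p,1}^*}/(2\pi\log u_p)\cdot\{1+O(\varepsilon_x)\}$, with $\e^{2w_{p,1}^*}=(\log x)^{2\sqrt{\beta_p(1-\beta_p)}}$. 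Finally $s_\omega^+(k,p)/s_\omega(k,p)=\lambda_\omega(k-1,p)/\lambda_\omega(k,p)=\gr_{k,p}\{1+O(\varepsilon_x)\}$, which at the peak equals $\alpha_{p,1}^*\{1+O(\varepsilon_x)\}$; since $s_\omega$ concentrates symmetrically about $w_{p,1}$ this gives $\Sigma_\omega^+=\alpha_{p,1}^*\Sigma_\omega\{1+O(\varepsilon_x)\}$, so $\tfrac{x}{p\log x}(\Sigma_\omega+\Sigma_\omega^+)=(1+\alpha_{p,1}^*)\tfrac{x}{p\log x}\Sigma_\omega\{1+O(\varepsilon_x)\}$. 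Collecting the powers of $\log x$ and $\log_2x$ and using $1-\beta_p=(\alpha_{p,1}^*)^2\beta_p$ turns the constant $(1+\alpha_{p,1}^*)f_\omega(\alpha_{p,1}^*)/\big(2\alpha_{p,1}^*\sqrt{\pi\beta_p\alpha_{p,1}^*}\big)$ into $\varrho_\omega(\beta_p)$, which is \eqref{eq:Mp:critZone:out:pomega}. When $\nu=\Omega$ and $\delta_p\ge\sqrt{\varepsilon_x}$ the argument is verbatim the same, with $\HH_\Omega$, $f_\Omega$ and $Z_{\Omega,\delta_p,1}(x,p)=\sqrt{\pi w_{p,1}}\{1+O(\varepsilon_x)\}$ in place of the $\omega$-objects, the one change being that $\lambda_\Omega(k,p)$ is now replaced on $\cK_{x,p,\delta_p,1}$, which contains $w_{p,1}$ and on which $\gr_{k,p}\le2-\delta_p$, by Corollary~\ref{cor:eq:lambdaOmega:smallk} applied with $A\asymp\delta_p\sqrt{\log_2p}$ (legitimate precisely because $\delta_p\ge\sqrt{\varepsilon_x}$), at the cost of $\{1+O(\varepsilon_x/\delta_p^2)\}$; the part of $\Sigma_\Omega$ from $\JJ(x,p)\setminus\cK_{x,p,\delta_p,1}$, lying to the right of the peak, is $O(\varepsilon_x/\delta_p^2)$ relative to $\Sigma_\Omega$ by Corollaries~\ref{cor:eq:lambdaOmega:bigk} and \ref{cor:eq:lambdaOmega:mediumk} and the Gaussian decay $\e^{-\Theta(\delta_p^2/\varepsilon_x)}$. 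This yields the second line of \eqref{eq:Mp:critZone:out:Gomega}.

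\textit{Peak at $w_{p,2}$.} When $\nu=\Omega$ and $\delta_p\le-\sqrt{\varepsilon_x}$ we have $\tfrac12\log u_p>2\log_2p$, so $w_{p,2}>2\log_2p$ sits in $\cK_{x,p,\delta_p,2}=w_{p,2}+\cI_{\Omega,\delta_p,2}$, on which $k\ge(2+|\delta_p|)\log_2p$. There Corollary~\ref{cor:eq:lambdaOmega:bigk}, applied with its parameter of size $\asymp|\delta_p|$, gives $\lambda_\Omega(k,p)=\gh(\log p)^2\,2^{-k}\{1+O(\gR_1)\}$ (the exponent $\delta_p^2/4$ in $\gR_1$ arising as $2Q(1+c|\delta_p|/2)$ for a suitable $c$), hence $s_\Omega(k,p)=\gs_2(k,p)\{1+O(\gR_1)\}$, while $\lambda_\Omega(k-1,p)=2\,\gh(\log p)^2\,2^{-k}\{1+O(\gR_1)\}$ gives $s_\Omega^+(k,p)=2\gs_2(k,p)\{1+O(\gR_1)\}$. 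The contribution to $\Sigma_\Omega$ and $\Sigma_\Omega^+$ of $\JJ(x,p)\setminus\cK_{x,p,\delta_p,2}$, lying to the left of the peak, is again negligible. By Lemma~\ref{l:eq:ZOmegataupj;Z_omega:smalltaup}, $\Sigma_\Omega=\gs_2(w_{p,2},p)Z_{\Omega,\delta_p,2}(x,p)\{1+O(\gR_1)\}$ with $Z_{\Omega,\delta_p,2}(x,p)=\sqrt{2\pi w_{p,2}}\{1+O(\varepsilon_x)\}$, and Stirling applied to $\gs_2^*(w_{p,2}^*,p)$, using $r_{x,w_{p,2}^*,p}=\tfrac12+O(\varepsilon_x)$ and so $\Gamma(1+r_{x,w_{p,2}^*,p})=\tfrac12\sqrt\pi\{1+O(\varepsilon_x)\}$, gives $\gs_2^*(w_{p,2}^*,p)=\gh\e^{-\gamma/2}(\log p)^2\e^{w_{p,2}^*}/\big(\sqrt\pi\sqrt{2\pi w_{p,2}^*}\big)\{1+O(\varepsilon_x)\}$. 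Hence $\Sigma_\Omega=\tfrac{1}{\sqrt\pi}\gh\e^{-\gamma/2}(\log p)^2\e^{w_{p,2}^*}\{1+O(\gR_1)\}$, $\Sigma_\Omega^+=2\Sigma_\Omega\{1+O(\gR_1)\}$, and $\Sigma_\Omega+\Sigma_\Omega^+=\tfrac{3}{\sqrt\pi}\gh\e^{-\gamma/2}(\log p)^2\e^{w_{p,2}^*}\{1+O(\gR_1)\}=\gc(\log p)^2\e^{w_{p,2}^*}\{1+O(\gR_1)\}$ because $\gc=3\gh\e^{-\gamma/2}/\sqrt\pi$. Since $\e^{w_{p,2}^*}=(\log x/\log p)^{1/2}$, this gives $\tfrac{x}{p\log x}(\Sigma_\Omega+\Sigma_\Omega^+)=\gc x(\log p)^{3/2}/(p\sqrt{\log x})\{1+O(\gR_1)\}=\gc x/(p(\log x)^{(1-3\beta_p)/2})\{1+O(\gR_1)\}$, the first line of \eqref{eq:Mp:critZone:out:Gomega}.

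The main obstacle is the error bookkeeping near the critical value $\beta_p=\tfrac15$, which accounts for the markedly higher difficulty of the case $\nu=\Omega$. When $\delta_p\downarrow0$ the function $\HH_\Omega$ is evaluated at $\alpha_{p,1}^*\uparrow2$, i.e.\ near its simple pole, so the logarithmic-derivative estimates used in Lemma~\ref{l:eq:ZOmegataupj;Z_omega:smalltaup} and in the replacement through Corollary~\ref{cor:eq:lambdaOmega:smallk} must be carried along with their exact dependence on $\delta_p$, which is what produces the relative error $O(\varepsilon_x/\delta_p^2)$; symmetrically, when $\delta_p\uparrow0$, recovering the precise power $(\log p)^{\delta_p^2/4}$ requires balancing the auxiliary parameter of Corollary~\ref{cor:eq:lambdaOmega:bigk} against the length of $\cK_{x,p,\delta_p,2}$. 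In both situations one must moreover control the Gaussian mass falling outside $\cK_{x,p,\delta_p,j}$, together with the entire transition window $|k-2\log_2p|\ll|\delta_p|\log_2p$ where only Corollary~\ref{cor:eq:lambdaOmega:mediumk} and Lemma~\ref{l:majo:Mp:smallbignu} are available, and show it stays within the stated error. I expect this last verification to be the genuine technical hurdle; everything else reduces to the Stirling and Gaussian-sum computations sketched above.
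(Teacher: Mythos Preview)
Your proposal is correct and follows essentially the same route as the paper: the same reduction through Propositions~\ref{prop:eq:Mp:Mpstar} and~\ref{prop:eq:MnupIstar:MnupIsstar}, the same replacement of $\lambda_\nu(k,p)$ by its explicit form via Theorems~\ref{th:eq:lambda_omega:unif} and Corollaries~\ref{cor:eq:lambdaOmega:smallk}--\ref{cor:eq:lambdaOmega:bigk}, the same peak analysis at $w_{p,1}$ or $w_{p,2}$ using Lemma~\ref{l:eq:ZOmegataupj;Z_omega:smalltaup} followed by Stirling, and the same treatment of the even parity through the ratio $\lambda_\nu(k-1,p)/\lambda_\nu(k,p)$.

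Two small points of precision. First, the $O(\varepsilon_x/\delta_p^2)$ in the case $\delta_p\ge\sqrt{\varepsilon_x}$ comes entirely from Corollary~\ref{cor:eq:lambdaOmega:smallk} (with $A\asymp\delta_p\sqrt{\log_2p}$), not from any $\delta_p$-dependence in Lemma~\ref{l:eq:ZOmegataupj;Z_omega:smalltaup}, whose error stays $O(\varepsilon_x)$. Second, the tail from $\JJ(x,p)\setminus\cK_{x,p,\delta_p,1}$ when $\delta_p>0$ is not handled in the paper by a Gaussian $\e^{-\Theta(\delta_p^2/\varepsilon_x)}$ but by the uniform bound $s_\Omega(k,p)\ll(\log p)^2(\log u_p)^{k-1}/(2^k(k-1)!)$ and Norton's inequality~\eqref{eq:majo:norton:sup}, yielding a decay of the shape $\e^{-c\delta_p\log_2x}/\delta_p$; this is actually stronger at the boundary $\delta_p=\sqrt{\varepsilon_x}$ than what you wrote. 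The analogous tail for $\delta_p<0$ (the interval $(2-|\delta_p|)\log_2p\le k\le(2+|\delta_p|)\log_2p$ and the piece $M_p^-$) is controlled the same way; this is exactly the ``genuine technical hurdle'' you flag, and the paper's equations \eqref{eq:majo:Mp-:smallbeta}, \eqref{eq:majo:Mp+:smallbeta:inter:somme}, \eqref{eq:majo:Mp+:bigbeta} carry it out.
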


\begin{proof} 
	Afin d'alléger les notations, posons $\varepsilon_x^*:=\varepsilon_x+\sqrt{\varepsilon_x}/|\delta_p|(\log p)^{\delta_p^{2}/4}$. D'après les définitions de $s_\nu$ en \eqref{def:epsilonx;snu;MnupIsstar}, des fonctions $\gs_j$ en \eqref{def:gsj} et compte tenu des estimations \eqref{eq:lambdaOmega:smallk}, \eqref{eq:lambdaOmega:bigk}, \eqref{eq:lambda_omega:unif}, nous avons 
	\begin{equation}\label{eq:sj:gsj:critZone:out}
		\begin{aligned}
			s_\Omega(k,p)&=\begin{cases}
				\displaystyle\gs_{\Omega,1}(k,p)\big\{1+O\big(\varepsilon_x/\delta_p^2\big)\big\}\qquad&\textnormal{si }k\in\cK_{x,p,\delta_p,1},\\
				\displaystyle\gs_{2}(k,p)\{1+O(\varepsilon_x^*)\}\qquad&\textnormal{si }k\in\cK_{x,p,\delta_p,2},
			\end{cases}\\
			s_\omega(k,p)&=\gs_{\omega,1}(k,p)\{1+O(\varepsilon_x)\}\quad(k\in \JJ(x,p)).
		\end{aligned}
	\end{equation}
	Commençons par traiter le cas $\nu=\omega$. Rappelons la définition de $M_{\nu,\iota}^{**}(x,p)$ en \eqref{def:epsilonx;snu;MnupIsstar}. Les estimations \eqref{eq:MnupIstar:MnupIsstar} et \eqref{eq:sj:gsj:critZone:out} fournissent
	\begin{equation}\label{eq:M_omegapIstar:M_omegapIsstar:inter}
		M_{\omega,\iota}^*(x,p)=M_{\omega,\iota}^{**}(x,p)\{1+O(\varepsilon_x)\}=\frac{\{1+O(\varepsilon_x)\}x\gs_{\omega,1}(w_{p,1},p)Z_\omega(x,p)}{p\log x}\quad\big((x,p)\in\cD_\varepsilon\big).
	\end{equation}
	Par ailleurs, nous avons
	\begin{equation}\label{eq:rwpp;Gamma:DL}
		 r_{w_{p,1}^*,p}=\frac{w_{p,1}^*-1}{(1-\beta_p)\log_2 x}=\frac{1+O(\varepsilon_x)}{\alpha_{p,1}^*},\quad\Gamma(1+ r_{w_{p,1}^*,p})=\Gamma\Big(1+\frac{1}{\alpha_{p,1}^*}\Big)\{1+O(\varepsilon_x)\}.
	\end{equation}
	D'après la définition de $\gs_{\omega,1}^*(t,p)$ en \eqref{def:gsjstar} et les estimations \eqref{eq:gsj:gsjstar} et \eqref{eq:rwpp;Gamma:DL}, il vient
	\begin{equation}\label{eq:gs_omega:gs_omegastar:critZone:out}
		\gs_{\omega,1}(w_{p,1},p)=\gs_{\omega,1}^*(w_{p,1}^*,p)\{1+O(\varepsilon_x)\}=\frac{\{1+O(\varepsilon_x)\}f_\omega(\alpha_{p,1}^*)(w_{p,1}^*)^{2w_{p,1}^*+1}}{\Gamma(w_{p,1}^*+1)^2\log u_p}.
	\end{equation}
	La formule de Stirling fournissant par ailleurs
	\begin{equation}\label{eq:stirling}
		\Gamma(w_{p,1}^*+1)^2=2\pi(w_{p,1}^*)^{2w_{p,1}^*+1}\e^{-2w_{p,1}^*}\{1+O(\varepsilon_x)\},
	\end{equation}
	nous déduisons de \eqref{eq:gs_omega:gs_omegastar:critZone:out} et \eqref{eq:stirling} que
	\begin{equation}\label{eq:gs_omega:critZone:out}
		\gs_{\omega,1}(w_{p,1},p)=\frac{\{1+O(\varepsilon_x)\}f_\omega(\alpha_{p,1}^*)\e^{2w_{p,1}^*}}{2\pi\log u_p}.
	\end{equation}
	En regroupant les estimations \eqref{eq:M_omegapIstar:M_omegapIsstar:inter} et \eqref{eq:gs_omega:critZone:out}, nous obtenons finalement
	\begin{equation}\label{eq:M_omegapIstar:critZone:out}
		M_{\omega,\iota}^*(x,p)=\frac{\{1+O(\varepsilon_x)\}xf_\omega(\alpha_{p,1}^*)(\log x)^{2\sqrt{\beta_p(1-\beta_p)}}Z_\omega(x,p)}{2\pi p(\log x)\log u_p},
	\end{equation}
	soit, d'après \eqref{eq:MnupIstar:MnupIsstar}, et à l'aide de l'estimation de $Z_\omega(x,p)$ en \eqref{eq:ZOmegataupj;Z_omega:smalltaup},
	\begin{equation}\label{eq:M_omegapI:critZone:out}
		M_{\omega,\iota}(x,p)=\frac{\{1+O(\varepsilon_x)\}x\beta_p^{1/4}f_\omega(\alpha_{p,1}^*)}{2\sqrt\pi(1-\beta_p)^{3/4}p(\log x)^{1-2\sqrt{\beta_p(1-\beta_p)}}\sqrt{\log_2 x}}\quad\big((x,p)\in\cD_\varepsilon\big).
	\end{equation}
	\par Examinons désormais le cas $\nu=\Omega$. Notons d'emblée la relation triviale
	\begin{equation}\label{eq:ineg:logpowers}
		\tfrac12-\tfrac32v\leq 1-2\sqrt{v(1-v)}\leq 1-4v-2v\log\frac{1-v}{4v}\quad(0<v<1),
	\end{equation}
	avec double égalité si et seulement si $v=\tfrac15$, qui nous sera utile dans la suite.
	\par Rappelons la définition des intervalles $\cK_{x,p,\delta_p,j}\ (j=1,2)$ en \eqref{def:Kxptauj} et notons $M_{p}^-(x)$ et $M_{p}^+(x)$ les contributions respectives à la somme intérieure de $M_{\Omega,\iota}^{**}(x,p)$ des intervalles $\cK_{x,p,\delta_p,1}$ et $\JJ(x,p)\smallsetminus\cK_{x,p,\delta_p,1}$. Nous évaluons séparément chacune de ces contributions.\par
	Dans le cas $\delta_p\geq\sqrt{\varepsilon_x}$, remarquons que $w_{p,1}\in\cK_{x,p,\delta_p,1}$ de sorte que
	\begin{equation}\label{eq:Mp-:bigbeta:inter}
		M_{p}^-(x)=\frac{\big\{1+O\big(\varepsilon_x/\delta_p^2\big)\big\}x\gs_{\Omega,1}(w_{p,1},p)Z_{\Omega,1}(x,p)}{p\log x}.
	\end{equation}
	Puisque $Z_{\Omega,1}(x,p)=Z_{\omega}(x,p)\{1+O(\varepsilon_x)\}$, nous obtenons directement
	\begin{equation}\label{eq:Mp-:bigbeta}
		M_p^-(x)=\frac{\big\{1+O\big(\varepsilon_x/\delta_p^2\big)\big\}x\beta_p^{1/4}f_\Omega(\alpha_{p,1}^*)}{2\sqrt\pi(1-\beta_p)^{3/4}p(\log x)^{1-2\sqrt{\beta_p(1-\beta_p)}}\sqrt{\log_2 x}}\quad(\delta_p\geq\sqrt{\varepsilon_x}).
	\end{equation}
	Dans le cas $\delta_p\leq-\sqrt{\varepsilon_x}$, nous avons 
	\begin{equation}\label{eq:majo:util:wp1}
		\Bigg(1-\sqrt{\frac{\log w_{p,1}}{w_{p,1}}}\Bigg)w_{p,1}\geq(2-|\delta_p|)\log_2 p.
	\end{equation}
	En rappelant la définition de $\gs_{\Omega,1}(t,p)$ en \eqref{def:gsj}, nous pouvons ainsi écrire
	\begin{equation}\label{eq:Mp-:smallbeta:inter}
		\begin{aligned}
			M_{p}^-(x)&=\frac{x\{1+O(\varepsilon_x/\delta_p^2)\}}{p(\log x)\log u_p}\sum_{k\leq(2-|\delta_p|)\log_2 p}\frac{\HH_{\Omega}(\gr_{k,p})k\e^{-\gamma r_{x,k,p}}w_{p,1}^{2k}}{\Gamma(1+ r_{x,k,p})k!^2}\\
			&\ll\frac{x\sqrt{\log_2 x}}{p(\log x)\log u_p}\sum_{k\leq(2-|\delta_p|)\log_2 p}\frac{\HH_{\Omega}(\gr_{k,p})(2w_{p,1})^{2k}}{(2k)!}.
		\end{aligned}
	\end{equation}
	Or, d'après \eqref{eq:majo:norton:inf} et \eqref{eq:majo:util:wp1}, nous avons 
	\begin{equation}\label{eq:majo:Mp-:smallbeta:inter:somme}
		\sum_{k\leq(2-|\delta_p|)\log_2 p}\frac{\HH_\Omega(\gr_{k,p})(2w_{p,1})^{2k}}{(2k)!}\ll \sum_{\ell\leq \{1-\sqrt{(\log w_{p,1})/w_{p,1}}\}2w_{p,1}}\frac{(2w_{p,1})^\ell}{\ell!}\ll \frac{\e^{2w_{p,1}}}{w_{p,1}\sqrt{\log w_{p,1}}}.
	\end{equation}
	Nous déduisons ainsi des estimations \eqref{eq:Mp-:bigbeta:inter} et \eqref{eq:majo:Mp-:smallbeta:inter:somme}, la majoration
	\begin{equation}\label{eq:majo:Mp-:smallbeta}
		M_{p}^-(x)\ll \frac{x}{p(\log x)^{1-2\sqrt{\beta_p(1-\beta_p)}}(\log_2 x)^{3/2}}\quad\big(\delta_p\leq-\sqrt{\varepsilon_x}\big).
	\end{equation}
	Ce terme d'erreur est pleinement acceptable au vu de \eqref{eq:ineg:logpowers}.
	\par Évaluons maintenant la contribution $M_{p}^+(x)$. Dans le cas $\delta_p\leq-\sqrt{\varepsilon_x}$, nous avons $w_{p,2}\in \cK_{x,p,\delta_p,2}$ de sorte que, d'après \eqref{eq:sj:gsj:critZone:out},
	\begin{equation}\label{eq:Mp+:smallbeta:inter}
		M_{p}^+(x)=\frac{x}{p\log x}\bigg\{\{1+O(\varepsilon_x^*)\}\gs_2(w_{p,2},p)Z_{\Omega,2}(x,p)+\sum_{(2-|\delta_p|)\log_2 p\leq k\leq (2+|\delta_p|)\log_2 p}s_\Omega(k,p)\bigg\}.
	\end{equation}
	Notons qu'avec la majoration triviale $\Phi(v)\leqslant 1\ (v\in\R)$, nous obtenons, d'après \eqref{eq:lambdaOmega:mediumk},
	\begin{equation}\label{eq:majo:sOmega:unif}
		s_\Omega(k,p)\ll\frac{(\log p)^2}{2^k}\quad(k\in \JJ(x,p)).
	\end{equation}
	Puisque, par ailleurs, 
	\[\Bigg(1-\sqrt{\frac{\log w_{p,2}}{w_{p,2}}}\Bigg)w_{p,2}\geq(2+|\delta_p|)\log_2 p,\]
	nous déduisons de \eqref{eq:majo:norton:inf} et \eqref{eq:majo:sOmega:unif} que le deuxième terme du membre de droite de \eqref{eq:Mp+:smallbeta:inter} peut être majoré par
	\begin{equation}\label{eq:majo:Mp+:smallbeta:inter:somme}
		\frac{x(\log p)^2}{p\log x}\sum_{k\leq\{1-\sqrt{(\log w_{p,2})/w_{p,2}}\}w_{p,2}}\frac{w_{p,2}^{k}}{k!}\ll\frac{x}{p(\log x)^{\{1-3\beta_p\}/2}\log_2 x}.
	\end{equation}
	Par ailleurs, une nouvelle application de \eqref{eq:gsj:gsjstar} fournit, au vu de la définition de $\gs_2^*(t,p)$ en \eqref{def:gsjstar},
	\begin{equation}\label{eq:gs2:gs2star}
		\gs_2(w_{p,2},p)=\gs_2^*(w_{p,2},p)\{1+O(\varepsilon_x)\}=\frac{\{1+O(\varepsilon_x^*)\}\gh\e^{-\gamma/2}(\log p)^2(w_{p,2}^*)^{w_{p,2}^*}}{2\Gamma(3/2)\Gamma(1+w_{p,2}^*)}.
	\end{equation}
	En regroupant les estimations \eqref{eq:Mp+:smallbeta:inter}, \eqref{eq:majo:Mp+:smallbeta:inter:somme} et \eqref{eq:gs2:gs2star}, la formule de Stirling fournit 
	\begin{equation}\label{eq:Mp+:smallbeta}
		M_{p}^+(x)=\frac{\{1+O(\varepsilon_x^*)\}\gh x\e^{-\gamma/2}(\log p)^2\e^{w_{p,2}^*}}{\sqrt\pi p\log x}=\frac{\gc x\{1+O(\varepsilon_x^*)\}}{3p(\log x)^{\{1-3\beta_p\}/2}}\quad(\delta_p\leq-\sqrt{\varepsilon_x}).
	\end{equation}
	Dans le cas $\delta_p\geq\sqrt{\varepsilon_x}$, remarquons que $w_{p,2}<(2-\delta_p)\log_2 p$. Une nouvelle application de \eqref{eq:majo:sOmega:unif} fournit donc
	\begin{equation}\label{eq:majo:Mp+:bigbeta:inter}
		M_{p}^+(x)\ll\frac{x}{p\log x}\sum_{(2-\delta_p)\log_2 p<k\leq\tfrac1{\log 2}\log_2 x}\frac{\e^{-\gamma r_{x,k,p}}(\log p)^2(w_{p,2})^{k-1}}{\Gamma(1+ r_{x,k,p})(k-1)!}.
	\end{equation}
	Posons $\xi_p:=2\beta_p(2-\delta_p)/(1-\beta_p)>1\ \big(\delta_p\geq\sqrt{\varepsilon_x}\big)$ de sorte que $\xi_pw_{p,2}=(2-\delta_p)\log_2 p$. L'inégalité~\eqref{eq:majo:norton:sup} implique alors
	\begin{equation}\label{eq:majo:Mp+:bigbeta:inter:somme}
		\sum_{k> (2-\delta_p)\log_2 p}\frac{\e^{-\gamma r_{x,k,p}}(w_{p,2})^{k-1}}{\Gamma(1+ r_{x,k,p})(k-1)!}\ll \frac{\e^{w_{p,2}\{1-Q(\xi_{p})\}}}{(\xi_p-1)\sqrt{w_{p,2}}}\ll\frac{1}{\delta_p(\log x)^{(2-\delta_p)\beta_p\{\log \xi_{p}-1\}}}.
	\end{equation}
	Posons
	\[g(v):=2\sqrt{v(1-v)}-(4-\delta_p)v+(2-\delta_p)v\log\frac{2v\{2-\delta_p\}}{1-v}\quad (0<v<1).\]
	Un développement de Taylor à l'ordre 2 fournit une constante $a>0$ telle que $g(\tfrac15+\delta_p)=a\delta_p+O(\delta_p^2)$. En regroupant les estimations \eqref{eq:majo:Mp+:bigbeta:inter} et \eqref{eq:majo:Mp+:bigbeta:inter:somme}, nous obtenons donc la majoration
	\begin{equation}\label{eq:majo:Mp+:bigbeta}
		M_{p}^+(x)\ll \frac{x\e^{-a\delta_p(\log_2 x)/2}}{\delta_pp(\log x)^{1-2\sqrt{\beta_p(1-\beta_p)}}}\quad\big(\delta_p\geq\sqrt{\varepsilon_x}\big).
	\end{equation}
	Ce terme d'erreur est également satisfaisant.
	\par Il reste à étudier la somme complémentaire $M_{\nu,\pi}(x,p)$ définie en \eqref{def:Mnu:odd;even}. Rappelons les définitions de $s_\nu^+(k,p)$, $M_{\nu,\pi}^*(x,p)$ et $M_{\nu,\pi}^{**}(x,p)$ en \eqref{def:Mnu:caspair}. Puisque,
	\[s_\nu^+(k,p)=\frac{\lambda_\nu(k-1,p)s_\nu(k,p)}{\lambda_\nu(k,p)},\]
	nous obtenons, d'après \eqref{eq:Mnustar;Mnusstar:caspair},
	\begin{equation}\label{eq:MOmegapPstarMOmegapIstar}
		M_{\Omega,\pi}^{*}(x,p)=\begin{cases}
			2M_{\Omega,\iota}^{*}(x,p)\{1+O(\varepsilon_x)\}\quad&\textnormal{si }\delta_p\leq-\sqrt{\varepsilon_x},\\
			\sqrt{(1-\beta_p)/\beta_p}M_{\Omega,\iota}^{*}(x,p)\{1+O(\varepsilon_x)\}&\textnormal{si }\delta_p\geq\sqrt{\varepsilon_x}.
		\end{cases}
	\end{equation}
	De même, $M_{\omega,\pi}^{*}(x,p)=\sqrt{(1-\beta_p)/\beta_p}M_{\omega,\iota}^{*}(x,p)\{1+O(\varepsilon_x)\}\ ((x,p)\in\cD_\varepsilon)$. Le résultat annoncé s'ensuit en vertu des estimations \eqref{eq:Mp:Mpstar}, \eqref{eq:MnupIstar:MnupIsstar}, \eqref{eq:Mp-:bigbeta}, \eqref{eq:majo:Mp-:smallbeta}, \eqref{eq:Mp+:smallbeta}, \eqref{eq:majo:Mp+:bigbeta} et \eqref{eq:MOmegapPstarMOmegapIstar}.
\end{proof}




\section{Étude de $M_{\Omega}(x,p)$ dans la zone critique}\label{sec:critZone:in}
\subsection{Préparation technique}

Dans la suite, posons, pour $p\geq 3$,
\[w_p^*:=2\log_2 p,\qquad w_p:=\lfloor w_p^*\rfloor,\]
et considérons les intervalles 
\begin{equation}\label{def:intervals:I;P;E}
	\begin{gathered}
		\cI=\cI_{x,p}:=\big[\tfrac12\kappa_\varepsilon(\beta_p)\log_2 x-1-w_p,\tfrac1{\log 2}\log_2 x-w_p\big],\\
		\cP:=\big[-c(4\log_2 p)^{2/3},c(4\log_2 p)^{2/3}\big],\qquad \cE:=\I\smallsetminus P,
	\end{gathered}
\end{equation}
où $c$ est une constante absolue assez grande.
\par Définissons enfin
\[Z(x,p):=\sum_{h\in\I}\frac{s_\Omega(w_p+h,p)}{s_\Omega(w_p,p)}\quad(3\leq p\leq x),\]
et, pour $K>0$, 
\[\sJ_{K}(a,b):=\int_{-Ka^{2/3}}^{Ka^{2/3}}\Phi\bigg(\frac{t}{\sqrt a}\bigg)\e^{bt-t^2/2a}\d t\quad(a>0,\, b\in\R).\]
L'évaluation de $Z(x,p)$ nécessite une estimation précise de $\sJ_{K}(a,b)$ sous certaines conditions portant sur $a$ et $b$.


\begin{lemma}\label{l:eq:sJtauK} Soient $a, b,\,K$ trois nombres réels tels que $a>0$, $|b|\leq \tfrac12Ka^{-1/3}$. Nous avons
	\begin{equation}\label{eq:sJtauK}
		\sJ_{K}(a,b)=\sqrt{2\pi a}\e^{ab^2/2}\Phi\Big(\frac{b\sqrt a}{\sqrt 2}\Big)\big\{1+O\big(\e^{-K^2a^{1/3}/8}\big)\big\},
	\end{equation}
	où la constante implicite est absolue.
\end{lemma}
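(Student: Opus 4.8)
The integral $\sJ_{K}(a,b)=\int_{-Ka^{2/3}}^{Ka^{2/3}}\Phi(t/\sqrt a)\e^{bt-t^2/2a}\d t$ is, up to the truncation, a Gaussian integral weighted by a slowly varying factor $\Phi(t/\sqrt a)$. The plan is to complete the square in the exponent, recognise the resulting integral as essentially the second moment-type identity $\int_{\R}\Phi(\alpha t)\e^{-(t-m)^2/2a}\,\d t$, and then control the tails discarded by the truncation. First I would write $bt-t^2/2a=-\tfrac1{2a}(t-ab)^2+\tfrac12ab^2$, so that
\[\sJ_{K}(a,b)=\e^{ab^2/2}\int_{-Ka^{2/3}}^{Ka^{2/3}}\Phi\Big(\frac t{\sqrt a}\Big)\e^{-(t-ab)^2/2a}\,\d t.\]

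**Main term.** I would first extend the range of integration to all of $\R$ and evaluate
\[I:=\int_{\R}\Phi\Big(\frac t{\sqrt a}\Big)\e^{-(t-ab)^2/2a}\,\d t.\]
Substituting $t=\sqrt a\,s$ gives $I=\sqrt a\int_{\R}\Phi(s)\e^{-(s-b\sqrt a)^2/2}\,\d s$. Now I use the standard fact that if $X,Y$ are independent standard normals then $\PP(X\le Y+\mu)=\Phi(\mu/\sqrt2)$; concretely, $\frac1{\sqrt{2\pi}}\int_{\R}\Phi(s)\e^{-(s-\mu)^2/2}\,\d s=\Phi(\mu/\sqrt2)$, which one checks by differentiating both sides in $\mu$ (the derivative of the left side is $\frac1{2\pi}\int\e^{-s^2/2}\e^{-(s-\mu)^2/2}\d s=\frac1{2\sqrt\pi}\e^{-\mu^2/4}$, matching the right side) and checking $\mu\to-\infty$. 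With $\mu=b\sqrt a$ this yields $I=\sqrt{2\pi a}\,\Phi(b\sqrt a/\sqrt2)$, hence the claimed main term $\sqrt{2\pi a}\,\e^{ab^2/2}\Phi(b\sqrt a/\sqrt2)$.

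**Error term.** It remains to bound the difference $\sJ_{K}(a,b)-\e^{ab^2/2}I=-\e^{ab^2/2}\int_{|t|>Ka^{2/3}}\Phi(t/\sqrt a)\e^{-(t-ab)^2/2a}\,\d t$ relative to the main term. Using $0\le\Phi\le1$ and the hypothesis $|ab|\le\tfrac12Ka^{2/3}$ (which follows from $|b|\le\tfrac12Ka^{-1/3}$), on the truncated region $|t-ab|\ge|t|-|ab|\ge\tfrac12|t|\ge\tfrac12Ka^{2/3}$, so $\e^{-(t-ab)^2/2a}\le\e^{-(t-ab)^2/4a}\e^{-K^2a^{1/3}/16}$ and the remaining Gaussian integrates to $O(\sqrt a)$. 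Thus the tail is $\ll\sqrt a\,\e^{-K^2a^{1/3}/16}$ — I would be a little careful with the exponent constant to land exactly on $\e^{-K^2a^{1/3}/8}$, adjusting the split $|t-ab|\ge\tfrac12|t|$ to e.g. $|t-ab|^2\ge\tfrac14 t^2+\tfrac14 t^2$ and absorbing. Dividing by the main term, whose size is $\gg\sqrt a\,\Phi(b\sqrt a/\sqrt2)\gg\sqrt a\,\Phi(\tfrac12Ka^{1/6})$ — wait, one must check the main term is not itself exponentially small: since $b\sqrt a/\sqrt2$ could be large negative, $\Phi(b\sqrt a/\sqrt2)$ may be as small as $\Phi(-\tfrac1{2\sqrt2}Ka^{1/6})\asymp\e^{-K^2a^{1/3}/16}/(Ka^{1/6})$. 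This is the main obstacle: the relative error only works because the tail bound $\e^{-K^2a^{1/3}/16}$ must beat the main term by the factor $\e^{-K^2a^{1/3}/8}$ claimed — so I need the sharper tail estimate $\e^{-K^2a^{1/3}/4}\cdot(\text{something})$ against a main term $\gg\e^{-K^2a^{1/3}/16}/(Ka^{1/6})$, giving relative error $\ll\e^{-K^2a^{1/3}(1/4-1/16)}\cdot Ka^{1/6}\ll\e^{-K^2a^{1/3}/8}$ after absorbing the polynomial factor. So the key step is to extract from the truncated tail a decay rate strictly better than twice the worst-case decay of the main term; I would do this by noting $(t-ab)^2\ge (1-\tfrac12)^2 t^2=\tfrac14 t^2$ when $|t|\ge Ka^{2/3}\ge 2|ab|$, giving tail $\ll\sqrt a\,\e^{-K^2a^{1/3}/8}$ directly, which already suffices once compared against $\Phi(b\sqrt a/\sqrt2)\gg\e^{-K^2a^{1/3}/16}$ — and the polynomial loss $Ka^{1/6}$ is swallowed by shrinking the constant in the exponent slightly. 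I would present this last comparison carefully, as it is where the specific shape of the hypothesis $|b|\le\tfrac12Ka^{-1/3}$ is used.
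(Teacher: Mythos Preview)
Your approach is correct and takes a genuinely different route from the paper. You complete the square and invoke the Gaussian convolution identity
\[
\frac1{\sqrt{2\pi}}\int_{\R}\Phi(s)\,\e^{-(s-\mu)^2/2}\d s=\Phi\Big(\frac{\mu}{\sqrt2}\Big)
\]
directly, which is cleaner than the paper's derivation of the main term. The paper instead writes $\Phi(t/\sqrt a)=\tfrac1{\sqrt{2\pi}}\int_{-\infty}^{t/\sqrt a}\e^{-z^2/2}\d z$, substitutes $u=z-t/\sqrt a$, applies Fubini, and integrates the (truncated) Gaussian in~$t$ first; the main term then emerges from $\int_{-\infty}^{0}\e^{-(u+b\sqrt a)^2/4}\d u=2\sqrt\pi\,\Phi(b\sqrt a/\sqrt2)$.

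The real difference is in the error analysis. You correctly locate the delicate point --- when $b<0$ is near the edge $|b|=\tfrac12Ka^{-1/3}$, the main term $\Phi(b\sqrt a/\sqrt2)$ is itself exponentially small, of order $\e^{-K^2a^{1/3}/16}/(Ka^{1/6})$ --- but your crude bound $\Phi\le1$ on the tails only yields an additive error $O(\sqrt a\,\e^{-K^2a^{1/3}/8})$, hence a \emph{relative} error $\ll(1+Ka^{1/6})\e^{-K^2a^{1/3}/16}$, which is genuinely weaker than the stated $\e^{-K^2a^{1/3}/8}$ (shrinking the constant, not recovering it). The paper's Fubini manoeuvre avoids this loss because its error integrand retains the weight $\e^{-(u+b\sqrt a)^2/4}$ and gains an \emph{extra} factor $\e^{-(u+2Ka^{1/6}-b\sqrt a)^2/4}$, so the smallness of the main term is automatically tracked. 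In your framework the fix is simple: on the tail $t<-Ka^{2/3}$ keep the factor $\Phi(t/\sqrt a)\ll\e^{-t^2/2a}$ (Mill's ratio) instead of bounding it by~$1$; then the combined exponent $-t^2/2a-(t-ab)^2/2a=-(t-ab/2)^2/a-ab^2/4$ produces the needed extra $\e^{-ab^2/4}$ matching the main term, and the remaining decay gives the constant~$\tfrac18$ with room to absorb the polynomial factor. The right tail (where $\Phi\le1$ is harmless since $t-ab>t$ when $b<0$) and the case $b\ge0$ (where $\Phi(b\sqrt a/\sqrt2)\ge\tfrac12$) are already fine as you wrote them.
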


\begin{proof}
	Par définition de $\Phi$ nous avons
	\[\sJ_K(a,b)=\frac{\e^{ab^2/2}}{\sqrt{2\pi}}\int_{-Ka^{2/3}}^{Ka^{2/3}}\bigg\{\int_{-\infty}^{t/\sqrt a}\e^{-z^2/2-(t/\sqrt a-b\sqrt a)^2/2}\d z\bigg\}\d t.\]
	À l'aide du changement de variables $u=z-t/\sqrt a$ puis d'une interversion d'intégrales, nous obtenons
	\begin{equation}\label{eq:sJ:interversionint}
		\begin{aligned}
			\sJ_K(a,b)&=\frac{\e^{ab^2/2}}{\sqrt{2\pi}}\int_{-\infty}^0\bigg\{\int_{-Ka^{2/3}}^{Ka^{2/3}}\e^{-(t/\sqrt a+u)^2/2-(t/\sqrt a-b\sqrt a)^2/2}\d t\bigg\}\d u\\
			&=\frac{\sqrt a\e^{ab^2/2}}{\sqrt{2}}\int_{-\infty}^0\e^{-(b\sqrt a+u)^2/4}\Big\{1+O\Big(\e^{-(2Ka^{1/6}+\{b\sqrt a-u\})^2/4}\Big)\Big\}\d u
		\end{aligned}
	\end{equation}
	Remarquons tout d'abord qu'en posant $v=\{b\sqrt a+u\}/\sqrt 2$, nous avons
	\begin{equation}\label{eq:sJ:eval:mainterm}
		\int_{-\infty}^0\e^{-(b\sqrt a+u)^2/4}\d u=\sqrt 2\int_{-\infty}^{b\sqrt{a/2}}\e^{-v^2/2}\d v=2\sqrt\pi\Phi\Big(\frac{b\sqrt a}{\sqrt 2}\Big).
	\end{equation}
	Par ailleurs, la majoration 
	\begin{equation}\label{eq:sJ:eval:errorterm}
		\begin{aligned}
			\int_{-\infty}^0\e^{-(u+b\sqrt a)^2/4-(u+2Ka^{1/6}-b\sqrt a)^2/4}\d u&\ll \int_{\R}\e^{-(u+b\sqrt a)^2/4-(u+2Ka^{1/6}-b\sqrt a)^2/4}\d u\\
			&\ll\e^{-(Ka^{1/6}-b\sqrt a)^2/2}\ll\e^{-K^2a^{1/3}/8}
		\end{aligned}
	\end{equation}
	permet de déduire la formule \eqref{eq:sJtauK} des estimations \eqref{eq:sJ:interversionint}, \eqref{eq:sJ:eval:mainterm} et \eqref{eq:sJ:eval:errorterm}.
\end{proof}



Posons
\begin{equation}\label{def:betapstar}
	\beta_p^*:=\log\frac{1-\beta_p}{4\beta_p}\quad(3\leq p\leq x).
\end{equation}
Rappelons la définition de $\delta_p$ en \eqref{def:betap;betapstar;varepsilonx;deltap;taup}.


\begin{lemma}\label{l:eq:Z:bigtaup}
	Nous avons uniformément 
	\begin{equation}\label{eq:Z:bigtaup}
		Z(x,p)=4\sqrt{\pi \log_2 p}(\log p)^{{\beta_p^*}^2}\Phi\big(\beta_p^*\sqrt{\log_2 p}\big)\Big\{1+O\Big(|\delta_p|+\frac{|\delta_p|^3}{\varepsilon_x}\Big)\Big\}\quad \big(3\leq p\leq x,\,|\delta_p|\leq\varepsilon_x^{1/3}\big).
	\end{equation}
\end{lemma}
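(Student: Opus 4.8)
Posons $Y_p:=2\log_2 p=w_p^*$, de sorte que $\e^{Y_p}=(\log p)^2$. Pour $k=w_p+h$ avec $h\in\cI$, on a $k/Y_p=1+O(\varepsilon_x^{1/3})$, donc $\mathfrak R(k,p)\ll\varepsilon_x$ ; l'estimation uniforme \eqref{eq:lambdaOmega:unif}, l'identit\'e $\gs_2(k,p)=\gs_2^*(k,p)$ valable pour $k\in\N$, et les d\'efinitions \eqref{def:epsilonx;snu;MnupIsstar}, \eqref{def:gsj} donnent, apr\`es simplification des facteurs ind\'ependants de $k$ et absorption du facteur $(\log p)^2$ de $\gs_2^*$ gr\^ace \`a $\e^{Y_p}=(\log p)^2$,
\[\frac{s_\Omega(w_p+h,p)}{s_\Omega(w_p,p)}=\{1+O(\varepsilon_x)\}\,\frac{\gs_2^*(w_p+h,p)}{\gs_2^*(w_p,p)}\cdot\frac{\HH_\Omega^*\bigl(2R_{w_p+h}(Y_p)\bigr)}{\HH_\Omega^*\bigl(2R_{w_p}(Y_p)\bigr)}\cdot\frac{P_{w_p+h}(Y_p)}{P_{w_p}(Y_p)}\qquad(h\in\cI).\]

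Le premier quotient est r\'egulier : posant $H_p(t):=\log\gs_2^*(t,p)$, un calcul reposant sur $\psi(t)=\log t+O(1/t)$, $\log u_p=(1-\beta_p)\log_2 x$ et $\log_2 p=\beta_p\log_2 x$ donne
\[H_p'(w_p)=\log\frac{1-\beta_p}{4\beta_p}+O(\varepsilon_x)=\beta_p^*+O(\varepsilon_x),\qquad H_p''(w_p)=-\frac1{2\log_2 p}+O(\varepsilon_x^2),\qquad H_p'''(w_p)\ll\varepsilon_x^2,\quad H_p^{(4)}(w_p)\ll\varepsilon_x^3,\]
d'o\`u, par d\'eveloppement de Taylor \`a l'ordre~$4$, pour $h\in\cP$,
\[\frac{\gs_2^*(w_p+h,p)}{\gs_2^*(w_p,p)}=\exp\!\Bigl(\beta_p^* h-\frac{h^2}{4\log_2 p}\Bigr)\bigl\{1+O\bigl(\varepsilon_x|h|+\varepsilon_x^2 h^2+\varepsilon_x^2|h|^3\bigr)\bigr\}.\]
Les deux autres quotients ne se lin\'earisent pas s\'epar\'ement au voisinage de $w_p$ ; en revanche leurs rapports se t\'elescopent. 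Comme $\e^{-Y_p}P_m(Y_p)=\Phi(\Delta_{m,p})+O(\sqrt{\varepsilon_x})$ d'apr\`es \eqref{eq:sommePartExp:uniform}, \'ecrire $P_{m+1}(Y)=P_m(Y)+Y^{m+1}/(m+1)!$ et sommer les logarithmes des facteurs (\'evalu\'es via \eqref{eq:sommePartExp:small}--\eqref{eq:sommePartExp:uniform}) identifie $\log\bigl(P_{w_p+h}(Y_p)/P_{w_p}(Y_p)\bigr)$ \`a une somme de Riemann pour $\int(\log\Phi)'$, soit $\log\bigl(\Phi(\Delta_{w_p+h,p})/\Phi(\Delta_{w_p,p})\bigr)$ \`a un reste contr\^ol\'e ; les termes parasites de taille $\sqrt{\varepsilon_x}$ se compensent ainsi entre num\'erateur et d\'enominateur et l'on obtient $P_{w_p+h}(Y_p)/P_{w_p}(Y_p)=\bigl(\Phi(\Delta_{w_p+h,p})/\Phi(\Delta_{w_p,p})\bigr)\{1+O(\varepsilon_x|h|+\cdots)\}$ avec $\Delta_{w_p+h,p}=h/\sqrt{Y_p}+O(\sqrt{\varepsilon_x})$. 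De m\^eme, via $1-R_k(Y_p)=Y_p^{k}/(k!\,P_k(Y_p))$ et la r\'egularit\'e de $\HH_\Omega^*$ au voisinage de~$2$, $\HH_\Omega^*(2R_{w_p+h}(Y_p))/\HH_\Omega^*(2R_{w_p}(Y_p))=1+O(\varepsilon_x|h|+\varepsilon_x)$.

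La contribution de $\cE=\cI\smallsetminus\cP$ est n\'egligeable : sur $\cE$, une majoration de Taylor \`a l'ordre~$2$ (comme au Lemme~\ref{l:eq:ZOmegataupj;Z_omega:smalltaup}) borne chaque terme par $\ll\e^{\beta_p^* h-ch^2/\log_2 p}$ ($c>0$ absolu), et puisque $h\mapsto\beta_p^* h-h^2/(4\log_2 p)$ est maximal en $h_*=\tfrac12 Y_p\beta_p^*$, de module $\asymp|\delta_p|/\varepsilon_x\ll(\log_2 p)^{2/3}$, il suffit de prendre assez grande la constante de \eqref{def:intervals:I;P;E} pour que $\cP$ contienne largement la bosse gaussienne, d'o\`u des queues surpolynomialement petites. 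En reportant les estimations pr\'ec\'edentes, il vient donc, \`a une erreur surpolynomialement petite pr\`es,
\[Z(x,p)=\frac1{\Phi(\Delta_{w_p,p})}\sum_{h\in\cP}\Phi\!\Bigl(\frac h{\sqrt{Y_p}}\Bigr)\e^{\beta_p^* h-h^2/(4\log_2 p)}\bigl\{1+O\bigl(\varepsilon_x+\varepsilon_x|h|+\varepsilon_x^2 h^2+\varepsilon_x^2|h|^3\bigr)\bigr\},\]
o\`u $\Phi(\Delta_{w_p,p})=\tfrac12+O(\sqrt{\varepsilon_x})$. La formule d'Euler--Maclaurin \`a l'ordre~$0$ remplace le terme principal par $\sJ_K(Y_p,\beta_p^*)$ avec $K:=2^{2/3}c$, puisque $\cP=[-KY_p^{2/3},KY_p^{2/3}]$ et $h^2/(4\log_2 p)=h^2/(2Y_p)$ ; les restes, \'evalu\'es sur le support effectif (centr\'e en $h_*$, de largeur $\asymp\sqrt{\log_2 p}$), donnent une erreur relative $\ll|\delta_p|+|\delta_p|^3/\varepsilon_x$, l'essentiel \'etant $\varepsilon_x^2|h|^3\ll\varepsilon_x^2|h_*|^3\asymp|\delta_p|^3/\varepsilon_x$ et $\varepsilon_x|h|\ll\varepsilon_x|h_*|\asymp|\delta_p|$ au voisinage du maximum. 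Comme $|\beta_p^*|\asymp|\delta_p|\le\varepsilon_x^{1/3}\asymp Y_p^{-1/3}$, le Lemme~\ref{l:eq:sJtauK} s'applique et fournit $\sJ_K(Y_p,\beta_p^*)=\sqrt{2\pi Y_p}\,\e^{Y_p{\beta_p^*}^2/2}\Phi(\beta_p^*\sqrt{Y_p}/\sqrt2)\{1+O(\e^{-K^2 Y_p^{1/3}/8})\}$, soit, vu que $\sqrt{2\pi Y_p}=2\sqrt{\pi\log_2 p}$, $\e^{Y_p{\beta_p^*}^2/2}=(\log p)^{{\beta_p^*}^2}$ et $\beta_p^*\sqrt{Y_p}/\sqrt2=\beta_p^*\sqrt{\log_2 p}$,
\[\sJ_K(Y_p,\beta_p^*)=2\sqrt{\pi\log_2 p}\,(\log p)^{{\beta_p^*}^2}\Phi\bigl(\beta_p^*\sqrt{\log_2 p}\bigr)\bigl\{1+O\bigl(\e^{-K^2 Y_p^{1/3}/8}\bigr)\bigr\}.\]
En multipliant par $1/\Phi(\Delta_{w_p,p})=2+O(\sqrt{\varepsilon_x})$, on obtient \eqref{eq:Z:bigtaup}.

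L'obstacle principal est ce contr\^ole uniforme des termes d'erreur : il faut \`a la fois exploiter le t\'elescopage des rapports pour \'eliminer les termes parasites de taille $\sqrt{\varepsilon_x}$ qu'introduiraient des estimations s\'epar\'ees de $R_k(Y_p)$ ou de $P_k(Y_p)$, et conserver distinctement le reste cubique du d\'eveloppement de $\log\gs_2^*$, dont l'\'evaluation pr\`es du maximum d\'ecal\'e $h_*\asymp\beta_p^*\log_2 p$ de l'enveloppe gaussienne produit pr\'ecis\'ement le terme $|\delta_p|^3/\varepsilon_x$ de l'\'enonc\'e --- ce qui explique aussi la n\'ecessit\'e d'un intervalle $\cP$ assez large pour contenir cette bosse d\'ecal\'ee.
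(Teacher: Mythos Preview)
Your global strategy coincides with the paper's: split $Z(x,p)$ into the contributions of $\cP$ and $\cE$, control $\cE$ by a second--order Taylor bound, expand the $\cP$--summand around $w_p$, pass to an integral via Euler--Maclaurin, and evaluate the latter through Lemma~\ref{l:eq:sJtauK}. The computation of $H_p'(w_p)=\beta_p^*+O(\varepsilon_x)$, $H_p''(w_p)=-1/(2\log_2 p)+O(\varepsilon_x^2)$ and the identification $\sJ_K(Y_p,\beta_p^*)=2\sqrt{\pi\log_2 p}\,(\log p)^{{\beta_p^*}^2}\Phi(\beta_p^*\sqrt{\log_2 p})$ are all correct.

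The substantive divergence is in how you extract the factor $\Phi(h/\sqrt{Y_p})$ from $\lambda_\Omega$. You start from Theorem~\ref{th:eq:lambdaOmega:unif} and are then forced to control the two ratios $P_{w_p+h}(Y_p)/P_{w_p}(Y_p)$ and $\HH_\Omega^*(2R_{w_p+h})/\HH_\Omega^*(2R_{w_p})$ separately. Your ``telescoping'' claim---that writing $\log(P_{w_p+h}/P_{w_p})$ as a sum of increments $\log(1+Y^{m+1}/((m+1)!P_m))$ identifies it with a Riemann sum for $\int(\log\Phi)'$ and cancels the $\sqrt{\varepsilon_x}$ errors---is the crux, yet you only sketch it. Making this precise requires a uniform local--limit estimate for $Y^{m}/m!$ and a matching second--order expansion of $\Phi((m-Y)/\sqrt Y)$, tracked over $|h|$ up to $(\log_2 p)^{2/3}$; this is nontrivial and not carried out. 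The same remark applies to the $R$--ratio. Moreover, you reintroduce a $\sqrt{\varepsilon_x}$ error at the very end via $1/\Phi(\Delta_{w_p,p})=2+O(\sqrt{\varepsilon_x})$, so the telescoping buys you nothing.

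The paper avoids all of this: it invokes Corollary~\ref{cor:eq:lambdaOmega:mediumk} directly, which already packages $\HH_\Omega^*(2R)P_k(Y)$ into $\gh\,e^Y\Phi(\Delta_{k,p})$ with an explicit error, yielding at once
\[s_\Omega(k,p)=\Phi(\Delta_{k,p})\,\gs_2(k,p)\Big\{1+O\Big(\tfrac{1+|\Delta_{k,p}|}{\sqrt{\log_2 x}}\Big)\Big\},\]
whence $s_\Omega(w_p+h,p)/s_\Omega(w_p,p)=2\Phi(h/\sqrt{w_p})\,\e^{\beta_p^*h-h^2/(2w_p)}\{1+O(\sqrt{\varepsilon_x}+|h|\varepsilon_x+|h|^3\varepsilon_x^2)\}$ without any telescoping. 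Replacing your appeal to Theorem~\ref{th:eq:lambdaOmega:unif} by Corollary~\ref{cor:eq:lambdaOmega:mediumk} closes the gap and shortens the argument.
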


\begin{proof}
	Rappelons la définition de $s_\Omega(k,p)$ en \eqref{def:epsilonx;snu;MnupIsstar} et de $\gs_2(k,p)$ en \eqref{def:gsj}. D'après l'estimation \eqref{eq:lambdaOmega:mediumk}, nous avons, pour $k\geq 2$,
	\begin{equation}\label{eq:sOmega:Phigs2}
		\begin{aligned}
			s_\Omega(k,p)&=\frac{\gh h_0(r_{x,k,p})\Phi(\Delta_{k,p})(\log p)^2(\log u_p)^{k-1}}{2^k(k-1)!}\bigg\{1+O\bigg(\varepsilon_x+\frac{1+|\Delta_{k,p}|}{\sqrt{\log_2 x}}\bigg)\bigg\}\\
			&=\Phi(\Delta_{k,p})\gs_2(k,p)\bigg\{1+O\bigg(\frac{1+|\Delta_{k,p}|}{\sqrt{\log_2 x}}\bigg)\bigg\}.
		\end{aligned}
	\end{equation}
	Rappelons les définitions des intervalles $\cI$ et $\cP$ en \eqref{def:intervals:I;P;E}. En remarquant que
	\[\frac{|\Delta_{w_p+h,p}|}{\sqrt{\log_2 x}}\ll h\varepsilon_x\quad(h\in \cI),\]
	nous obtenons, d'après \eqref{eq:rapportgsj}, l'estimation
	\begin{equation}\label{eq:rapportsOmega:critZone:in}
		\frac{s_\Omega(w_p+h,p)}{s_\Omega(w_p,p)}=\big\{1+O\big(\sqrt{\varepsilon_x}+h\varepsilon_x+h^3\varepsilon_x^2\big)\big\}\frac{\Phi(\Delta_{w_p+h,p})\e^{H'_{p,2}(w_p)h+H''_{p,2}(w_p)h^2/2}}{\Phi(\Delta_{w_p,p})}\quad (h\in \cP).
	\end{equation}
	Or, nous avons d'une part
	\begin{equation}\label{eq:rapportsPhi:critZone:in}
		\frac{\Phi(\Delta_{w_p+h,p})}{\Phi(\Delta_{w_p,p})}=2\Phi\Big(\frac{h}{\sqrt{w_p}}\Big),
	\end{equation}
	et d'autre part,
	\begin{equation}\label{eq:derivatives:Hp2:critZone:in}
		\begin{gathered}
			H'_{p,2}(w_p)=\log\frac{w_{p,2}}{w_p}+O(\varepsilon_x)=\beta_p^*+O(\varepsilon_x),\\
			\quad H_{p,2}''(w_p)=\frac{-1+O(\varepsilon_x)}{w_p},\quad H_{p,2}^{(m)}(w_p)\ll\varepsilon_x^{m-1}\ (m\geq 3).
		\end{gathered}	
	\end{equation}
	Désignons encore par $Z^{(P)}(x,p)$ et $Z^{(E)}(x,p)$ les contributions à $Z(x,p)$ des intervalles $\cP$ et $\cE$. Nous pouvons ainsi écrire
	\[Z^{(P)}(x,p)=2\sum_{h\in \cP} \Phi\bigg(\frac{h}{\sqrt{w_p}}\bigg)e^{\beta_p^*h-h^2/2w_p}\big\{1+O\big(\sqrt{\varepsilon_x}+h\varepsilon_x+h^3\varepsilon_x^2\big)\big\}.\]
	La formule d'Euler-Maclaurin appliquée à l'ordre $0$ fournit alors
	\[Z^{(P)}(x,p)=2\int_{\cP}\Phi\bigg(\frac{t}{\sqrt{w_p}}\bigg)e^{\beta_p^*t-t^2/2w_p}\Big\{1+O\big(\sqrt{\varepsilon_x}+t\varepsilon_x+t^3\varepsilon_x^2\big)\Big\}\d t+O(1).\]
	Remarquons également que nous avons $\beta_p^*\leq \tfrac12c(w_p)^{-1/3}$. En effet, d'après la définition de $\beta_p^*$ en \eqref{def:betap;betapstar;varepsilonx;deltap;taup}, nous obtenons
	\[\tfrac12c(w_p)^{-1/3}=\tfrac12c\Big(\frac{\varepsilon_x}{2\beta_p}\Big)^{1/3}\geq 7\varepsilon_{x}^{1/3}\geq\beta_p^*.\]
	Nous pouvons ainsi appliquer \eqref{eq:sJtauK} et obtenir
	\begin{align*}
		Z^{(P)}(x,p)&=4\sqrt{\pi \log_2 p}(\log p)^{{\beta_p^*}^2}\Phi\big(\beta_p^*\sqrt{\log_2 p}\big)\bigg\{1+O\bigg(|\delta_p|+\frac{|\delta_p|^3}{\varepsilon_x}\bigg)\bigg\},
	\end{align*}
	puisque, d'une part,
	\[\varepsilon_x\int_{\cP} t\Phi\bigg(\frac{t}{\sqrt{w_p}}\bigg)e^{\beta_p^*t-t^2/2w_p}\d t\ll |\delta_p|\sqrt{\log_2 p}(\log p)^{{\beta_p^*}^2}\Phi\big(\beta_p^*\sqrt{\log_2 p}\big),\]
	et d'autre part
	\[\varepsilon_x^2\int_{\cP} t^3\Phi\bigg(\frac{t}{\sqrt{w_p}}\bigg)e^{\beta_p^*t-t^2/2w_p}\d t\ll \frac{|\delta_p|^3}{\varepsilon_x}\sqrt{\log_2 p}(\log p)^{{\beta_p^*}^2}\Phi\big(\beta_p^*\sqrt{\log_2 p}\big).\]
	\par Il reste à évaluer la contribution $Z^{(E)}(x,p)$ de l'intervalle $\cE$ à $Z(x,p)$. D'après la formule de Taylor-Lagrange à l'ordre $2$, il existe, pour tout $h\in \cE$, un nombre réel $c_{h}\in \JJ(x,p)$ tel que
	\begin{equation}\label{eq:Hp2:DL:errorDomain:critZone:in}
		H_{p,2}(w_p+h)=H_{p,2}(w_p)+H'_{p,2}(w_p)h+\tfrac12H''_{p,2}(c_{h})h^2\quad\big(3\leq p\leq x,\,h\in \cE\big).
	\end{equation}
	Puisque $c_{h}\in \JJ(x,p)$, nous avons $\varepsilon_x\log 2\leqslant 1/c_{h}\leqslant 2\varepsilon_x/\kappa_\varepsilon(\beta_p)$. Posons $b=b_p:=2\beta_p\log 2$. Les estimations \eqref{eq:derivatives:Hnu} à \eqref{eq:derivatives:Hpj} fournissent alors, pour $h\in \cE$,
	\begin{equation}\label{eq:derivatives:Hp2:errorDomain:critZone:in}
		H'_{p,2}(w_p)=\beta_p^*+O(\varepsilon_x),\quad H_{p,2}''(c_{h})=-\frac{1+O(\varepsilon_x)}{c_{h}}\leq-\frac{b}{w_p}+O(\varepsilon_x^2).
	\end{equation}
	En regroupant les estimations \eqref{eq:Hp2:DL:errorDomain:critZone:in} et \eqref{eq:derivatives:Hp2:errorDomain:critZone:in}, il vient
	\[H_{p,2}(w_p+h)-H_{p,2}(w_p)\leq \beta_p^*h-\frac{bh^2}{2w_p}+O(1)\quad(h\in \cE),\]
	soit
	\[\frac{s_\Omega(w_p+h,p)}{s_\Omega(w_p,p)}\ll\e^{\beta_p^*h-bh^2/2w_p}\quad(h\in \cE).\]
	Une sommation sur $h\in \cE$ fournit alors
	\begin{align*}
		Z^{(E)}(x,p)&\ll\int_{\cE}\Phi\Big(\frac{t}{\sqrt{w_p}}\Big)\e^{\beta_p^*t-bt^2/2w_p}\d t\\
		&\ll\sqrt{\log_2 p}\e^{(\log_2 p){\beta_p^*}^2/b}\bigg\{\Phi\bigg(\frac{\beta_p^*\sqrt{2\log_2 p}}{\sqrt{b}}-c\sqrt 2\{4\log_2 p\}^{1/6}\bigg)\bigg\}\\
		&\ll\sqrt{\log_2 p}\exp\Big\{\Big[\Big(\frac{25}{4}\Big)^2-\Big(2^{1/3}cb-\frac{25}{4}\Big)^2\Big]\frac{(\log_2 p)^{1/3}}{b}\Big\}\\
		&\ll\sqrt{\log_2 p}.
	\end{align*}
	Ainsi la contribution de l'intervalle $\cE$ à $Z(x,p)$ peut être englobée dans le terme d'erreur de $Z^{(P)}(x,p)$. Cela complète la démonstration.
\end{proof}


Nous sommes désormais en mesure d'évaluer la quantité $M_{\Omega}(x,p)$ définie en \eqref{eq:Mnup:defbis} pour des valeurs de $\beta_p$ situées dans un intervalle centré autour de la valeur critique $\tfrac15$. Rappelons la définition de $\delta_p$ en \eqref{def:betap;betapstar;varepsilonx;deltap;taup}.

\subsection{Estimation de $M_{\Omega}(x,p)$ dans la zone critique}


\begin{theorem}\label{th:eq:Mp:critZone:in}
	Nous avons uniformément
	\begin{equation}\label{eq:Mp:critZone:in}
		M_{\Omega}(x,p)=\bigg\{1+O\bigg(\sqrt{\varepsilon_x}+\frac{|\delta_p|^3}{\varepsilon_x}\bigg)\bigg\}\frac{\gc x\Phi\big(\beta_p^*\sqrt{\log_2 p}\big)}{p(\log x)^{1-\beta_p\{4+2\beta_p^*+{\beta_p^*}^2\}}}\quad\big(x\geq 3,\,|\delta_p|\leq\varepsilon_x^{1/3}\big).
	\end{equation}	
\end{theorem}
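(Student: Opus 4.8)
Here is how I would approach the proof, given everything established so far.

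The plan is to split $M_\Omega(x,p)=M_{\Omega,\iota}(x,p)+M_{\Omega,\pi}(x,p)$ according to the parity of $\Omega(n)$, reduce each piece to the corresponding sum $M^{**}$, evaluate it via Lemma \ref{l:eq:Z:bigtaup}, and then recombine. Since $|\delta_p|\leqslant\varepsilon_x^{1/3}$ forces $\beta_p=\tfrac15+O(\varepsilon_x^{1/3})$, we are, for $x$ large, in $\cD_\varepsilon$ with $\varepsilon=\tfrac1{10}$. Proposition \ref{prop:eq:Mp:Mpstar} — together with its evident analogue for $\Omega(n)$ even, obtained by writing $n=apb$ with $\nu(a)=\nu(b)-1$ and invoking \eqref{eq:majo:Mp:smallbignu} once more — reduces $M_{\Omega,\iota}$ and $M_{\Omega,\pi}$ to $M^{*}_{\Omega,\iota}$ and $M^{*}_{\Omega,\pi}$ at the cost of an error $\ll x/\{p(\log x)^{\gamma_\Omega(\beta_p)+\varepsilon}\}$. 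Since $\gamma_\Omega$ and the exponent $1-\beta_p\{4+2\beta_p^*+{\beta_p^*}^2\}$ of the announced main term agree to second order at $\beta_p=\tfrac15$ (compare \eqref{eq:ineg:logpowers} and the $\sC^1$ property of $\gamma_\Omega$), they differ by $O(\delta_p^2)=O(\varepsilon_x^{2/3})<\varepsilon$, so this error is $\ll(\log x)^{-\varepsilon/2}$ times the main term; and as ${\beta_p^*}^2\log_2 p\ll\delta_p^2\log_2 x\ll\varepsilon_x^{-1/3}$, the factor $\Phi(\beta_p^*\sqrt{\log_2 p})$ there is $\gg\e^{-O((\log_2 x)^{1/3})}=(\log x)^{-o(1)}$, so the error is absorbed into the term $O(\sqrt{\varepsilon_x})$. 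Finally \eqref{eq:MnupIstar:MnupIsstar} and \eqref{eq:Mnustar;Mnusstar:caspair} give $M^{*}_{\Omega,\iota}=M^{**}_{\Omega,\iota}\{1+O(\varepsilon_x)\}$ and $M^{*}_{\Omega,\pi}=M^{**}_{\Omega,\pi}\{1+O(\varepsilon_x)\}$.

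Next I would evaluate $M^{**}_{\Omega,\iota}(x,p)=\frac{x}{p\log x}\sum_{k\in\JJ(x,p)}s_\Omega(k,p)=\frac{x}{p\log x}\,s_\Omega(w_p,p)\,Z(x,p)$, with $w_p=\lfloor 2\log_2 p\rfloor$ and $Z(x,p)$ the quantity estimated in Lemma \ref{l:eq:Z:bigtaup}. One first puts $s_\Omega(w_p,p)$ in closed form: since $\Delta_{w_p,p}\ll\sqrt{\varepsilon_x}$, Corollary \ref{cor:eq:lambdaOmega:mediumk} gives $\lambda_\Omega(w_p,p)=\frac{\gh(\log p)^2}{2^{w_p}}\{\tfrac12+O(\sqrt{\varepsilon_x})\}$, whence $s_\Omega(w_p,p)=\tfrac12\gs_2^*(w_p,p)\{1+O(\sqrt{\varepsilon_x})\}$. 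Writing $w_{p,2}^*=w_p^*\e^{\beta_p^*}$ (this is the definition of $\beta_p^*$ rewritten via $\log p=(\log x)^{\beta_p}$) and $r_{x,w_p,p}=\frac{2\beta_p}{1-\beta_p}+O(\varepsilon_x)=\tfrac12+O(|\delta_p|+\varepsilon_x)$, Stirling's formula for $\Gamma(w_p)$ together with $\e^{w_p^*}=(\log p)^2$ and $\e^{w_p^*\beta_p^*}=(\log p)^{2\beta_p^*}$ yields
\[s_\Omega(w_p,p)=\frac{\gh\,\e^{-\gamma/2}\,(\log p)^{4+2\beta_p^*}}{4\pi\sqrt{\log_2 p}}\,\{1+O(|\delta_p|+\sqrt{\varepsilon_x})\}.\]
Multiplying by Lemma \ref{l:eq:Z:bigtaup}, namely $Z(x,p)=4\sqrt{\pi\log_2 p}\,(\log p)^{{\beta_p^*}^2}\Phi(\beta_p^*\sqrt{\log_2 p})\{1+O(|\delta_p|+|\delta_p|^3/\varepsilon_x)\}$, the powers of $\sqrt{\log_2 p}$ cancel, the constants combine into $\frac{\gh\e^{-\gamma/2}}{4\pi}\cdot4\sqrt\pi=\frac{\gh\e^{-\gamma/2}}{\sqrt\pi}=\tfrac13\gc$, and writing $\log p=(\log x)^{\beta_p}$ gives
\[M^{**}_{\Omega,\iota}(x,p)=\frac{\gc\,x\,\Phi(\beta_p^*\sqrt{\log_2 p})}{3\,p\,(\log x)^{1-\beta_p\{4+2\beta_p^*+{\beta_p^*}^2\}}}\,\{1+O(\sqrt{\varepsilon_x}+|\delta_p|^3/\varepsilon_x)\},\]
where I absorb $O(|\delta_p|)$ into $O(\sqrt{\varepsilon_x}+|\delta_p|^3/\varepsilon_x)$ (according as $|\delta_p|\leqslant\sqrt{\varepsilon_x}$ or not).

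For the complementary parity, reindexing $j=k-1$ in $M^{**}_{\Omega,\pi}(x,p)=\frac{x}{p\log x}\sum_k s_\Omega^+(k,p)$ gives $\frac{s_\Omega^+(j+1,p)}{s_\Omega(j,p)}=\frac{h_0(r_{x,j+1,p})\log u_p}{h_0(r_{x,j,p})\,j}=\frac{\log u_p}{j}\{1+O(\varepsilon_x)\}$, the arguments $r_{x,j,p},r_{x,j+1,p}$ differing by $O(\varepsilon_x)$ and staying $\asymp1$ on $\JJ(x,p)$. Expanding $\frac{\log u_p}{w_p+h}=\frac{\log u_p}{w_p}\{1-\tfrac{h}{w_p}+O(h^2/w_p^2)\}$ and using that the first and second moments of $h$ against the weights $\frac{s_\Omega(w_p+h,p)}{s_\Omega(w_p,p)}$ are, relative to $Z(x,p)$, of order $|\delta_p|\log_2 p+\sqrt{\log_2 p}$ and $\delta_p^2(\log_2 p)^2+\log_2 p$ respectively — a routine Gaussian computation, the summand being essentially $\Phi(h/\sqrt{w_p})$ times a Gaussian of mean $\asymp\beta_p^* w_p$ and variance $\asymp\log_2 p$ — one gets $M^{**}_{\Omega,\pi}(x,p)=\frac{\log u_p}{w_p}\,M^{**}_{\Omega,\iota}(x,p)\{1+O(|\delta_p|+\varepsilon_x)\}=2\,M^{**}_{\Omega,\iota}(x,p)\{1+O(|\delta_p|+\varepsilon_x)\}$, because $\frac{\log u_p}{w_p}=\frac{1-\beta_p}{2\beta_p}\{1+O(\varepsilon_x)\}=2\{1+O(|\delta_p|+\varepsilon_x)\}$. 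Adding the two parities yields $M_\Omega(x,p)=3\,M^{**}_{\Omega,\iota}(x,p)\{1+O(\sqrt{\varepsilon_x}+|\delta_p|^3/\varepsilon_x)\}$, which is the announced formula.

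The main difficulty I anticipate lies in the bookkeeping of the successive error terms in the range $\delta_p>0$ (so $\beta_p^*<0$), where the summand defining $Z(x,p)$ concentrates near the index $h\asymp\beta_p^* w_p$ at which $\Phi(\beta_p^*\sqrt{\log_2 p})$ is already small (of size $\asymp\e^{-{\beta_p^*}^2\log_2 p/2}$ up to polynomial factors): one must check that each truncation stays negligible against this small main term. This is exactly what forces the error term $O(\sqrt{\varepsilon_x}+|\delta_p|^3/\varepsilon_x)$ rather than something cleaner, and it is kept under control by the bound ${\beta_p^*}^2\log_2 p\ll\delta_p^2\log_2 x\ll\varepsilon_x^{-1/3}$, which keeps every stray exponential factor at the level $\e^{O((\log_2 x)^{1/3})}=(\log x)^{o(1)}$.
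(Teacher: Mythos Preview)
Your proposal is correct and follows essentially the same route as the paper: reduce to $M^{**}_{\Omega,\iota}$, write it as $\frac{x}{p\log x}s_\Omega(w_p,p)Z(x,p)$, evaluate $s_\Omega(w_p,p)$ via Corollary~\ref{cor:eq:lambdaOmega:mediumk} and Stirling, insert Lemma~\ref{l:eq:Z:bigtaup}, and then handle the complementary parity. The one place where you diverge is the even-parity step: the paper simply writes $s_\Omega^+(k,p)=\dfrac{\lambda_\Omega(k-1,p)}{\lambda_\Omega(k,p)}\,s_\Omega(k,p)$ and uses the uniform estimate~\eqref{eq:lambdaOmega:unif} to get $\lambda_\Omega(k-1,p)/\lambda_\Omega(k,p)=2R_k\{1+O(\sqrt{\varepsilon_x})\}=2\{1+O(\sqrt{\varepsilon_x})\}$ near $k\approx w_p$, which gives $M^{*}_{\Omega,\pi}=2M^{*}_{\Omega,\iota}\{1+O(\sqrt{\varepsilon_x})\}$ in one line, whereas you reindex and appeal to a moment computation for $h/w_p$ against the Gaussian-like weights. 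Both arguments are valid; the paper's is a bit more direct and avoids the extra moment estimate.
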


\begin{proof}
	Commençons par noter les estimations
	\begin{equation}\label{eq:rwpp;Gamma:critZone:in}
		 r_{w_p^*,p}=\frac{2\beta_p\{1+O(\varepsilon_x)\}}{1-\beta_p}=\tfrac12+O(|\delta_p|),\qquad\Gamma(1+ r_{w_p^*,p})=\Gamma\big(\tfrac32\big)+O(|\delta_p|).
	\end{equation}
	Rappelons la définition de $s_\Omega(k,p)$ en \eqref{def:gsj}. D'après \eqref{eq:sOmega:Phigs2}, nous avons
	\begin{align*}
		s_\Omega(w_p,p)&=\Phi(\Delta_{w_p,p})\gs_2^*(w_p^*,p)\big\{1+O\big(\sqrt{\varepsilon_x}\big)\big\}=\frac{\big\{1+O\big(\sqrt{\varepsilon_x}+|\delta_p|\big)\big\}\gh\e^{-\gamma/2}(\log p)^2(w_{p,2}^*)^{w_p^*-1}}{4\Gamma(3/2)\Gamma(w_p^*)}\\
		&=\{1+O(\sqrt{\varepsilon_x}+|\delta_p|)\}\frac{\gh\e^{-\gamma/2}(\log p)^2(w_p^*)^{w_p^*}}{2\sqrt\pi\Gamma(w_p^*+1)}\Big(\frac{1-\beta_p}{4\beta_p}\Big)^{w_p^*}.
	\end{align*}
	Posons $E_x:=\sqrt{\varepsilon_x}+|\delta_p|^3/\varepsilon_x\ (3\leq p\leq x)$. À l'aide de l'estimation \eqref{eq:Z:bigtaup}, nous obtenons
	\[s_\Omega(w_p,p)Z(x,p)=\{1+O(E_x)\}\frac{\sqrt 2\gh\e^{-\gamma/2}(\log p)^{{2+\beta_p^*}^2}(w_p^*)^{w_p^*+1/2}\e^{\beta_p^*w_p^*}\Phi\big(\beta_p^*\sqrt{\log_2 p}\big)}{\Gamma(w_p^*+1)}.\]
	Par la formule de Stirling, il suit
	\[s_\Omega(w_p,p)Z(x,p)=\bigg\{\frac{\gh\e^{-\gamma/2}}{\sqrt\pi}+O(E_x)\bigg\}(\log p)^{4+2\beta_p^*+{\beta_p^*}^2}\Phi\big(\beta_p^*\sqrt{\log_2 p}\big)\]
	Enfin, en rappelant les définitions \eqref{def:epsilonx;snu;MnupIsstar} et \eqref{def:ZOmegatauj;Z_omega;erf;erfc}, nous obtenons, d'après \eqref{eq:MnupIstar:MnupIsstar},
	\begin{equation}\label{eq:MOmegapIstar:critZone:in}
		M_{\omega,\iota}^*(x,p)=\frac{\{1+O(\varepsilon_x)\}xs_\Omega(w_p,p)Z(x,p)}{p\log x}=\frac{\{1+O(E_x)\}\gc x\Phi(\beta_p^*\sqrt{(\log_2 x)/5})}{3p(\log x)^{1-\beta_p\{4+2\beta_p^*+{\beta_p^*}^2\}}}.
	\end{equation}\par
 	Il reste à étudier la somme complémentaire $M_{\nu,\pi}(x,p)$ définie en \eqref{def:Mnu:odd;even}. Rappelons les définitions \eqref{def:Mnu:caspair}. Nous avons encore
	\[s_\nu^+(k,p)=\frac{\lambda_\nu(k-1,p)s_\nu(k,p)}{\lambda_\nu(k,p)},\]
	de sorte que
	\begin{equation}\label{eq:MOmegapPstar:MOmegapIstar:critZone:in}
		M_{\Omega,P}^*(x,p)=2M_{\omega,\iota}^*(x,p)\{1+O(\sqrt{\varepsilon_x})\}.
	\end{equation}
	La formule \eqref{eq:Mp:critZone:in} se déduit alors de \eqref{eq:Mp:Mpstar}, \eqref{eq:MOmegapIstar:critZone:in} et \eqref{eq:MOmegapPstar:MOmegapIstar:critZone:in}.
\end{proof}


Le résultat suivant précise le comportement de $M_{\nu}(x,p)$ pour des valeurs de $\beta_p$ vérifiant $|\delta_p|\leq\sqrt{\varepsilon_x}$. Rappelons les définitions de $\gamma_\Omega$ et $\Psi$ en \eqref{def:eq:gR;gammanu;Psi}.

\begin{corollary}\label{cor:eq:Mp:critZone:in:details}
	Sous la condition $|\delta_p|\leq\varepsilon_x^{1/3}$, nous avons uniformément
	\begin{equation}\label{eq:MOmega:zonecrit:in:in}
		M_\Omega(x,p)=\frac{\{1+O(\sqrt{\varepsilon_x}+|\delta_p|^3/\varepsilon_x)\}\gc x\Psi(\sqrt{125}\delta_p/4\sqrt{\varepsilon_x})}{p(\log x)^{\gamma_\Omega(\beta_p)}}.
	\end{equation}
\end{corollary}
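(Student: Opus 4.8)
\medskip\noindent\textit{Esquisse de preuve.} Le plan est de déduire \eqref{eq:MOmega:zonecrit:in:in} du Théorème~\ref{th:eq:Mp:critZone:in} en convertissant le facteur gaussien à l'aide de la fonction $\Psi$. On peut supposer $x$ assez grand, donc $\varepsilon_x$ et $|\delta_p|$ aussi petits que souhaité (l'énoncé étant trivial sinon, les deux membres étant bornés et le terme d'erreur valant $O(1)$), de sorte que $\beta_p$ reste confiné dans un voisinage fixe de $\tfrac15$. L'ingrédient central est l'identité $\Phi(w)=\e^{-\min(w,0)^2/2}\Psi(-w)$, conséquence immédiate de la définition de $\Psi$ en \eqref{def:eq:gR;gammanu;Psi} et de $\Phi(w)=1-\Phi(-w)$. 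Appliquée à $w=\beta_p^*\sqrt{\log_2 p}$ et compte tenu de $\log_2 p=\beta_p/\varepsilon_x$, elle transforme \eqref{eq:Mp:critZone:in} en
\[M_\Omega(x,p)=\big\{1+O\big(\sqrt{\varepsilon_x}+|\delta_p|^3/\varepsilon_x\big)\big\}\frac{\gc x\,\Psi\big(-\beta_p^*\sqrt{\log_2 p}\big)}{p(\log x)^{\widetilde\gamma(\beta_p)}},\quad \widetilde\gamma(v):=1-v\{4+2\beta_v^*+{\beta_v^*}^2\}+\tfrac12v\min(\beta_v^*,0)^2,\]
où $\beta_v^*:=\log\{(1-v)/(4v)\}$ coïncide avec $\beta_p^*$ (cf.~\eqref{def:betapstar}) lorsque $v=\beta_p$. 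Il reste alors à remplacer l'exposant $\widetilde\gamma(\beta_p)$ par $\gamma_\Omega(\beta_p)$ et l'argument $-\beta_p^*\sqrt{\log_2 p}$ de $\Psi$ par $\sqrt{125}\,\delta_p/4\sqrt{\varepsilon_x}$.

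Pour le premier remplacement, je vérifierais que, sur chacune des deux branches $v<\tfrac15$ et $v>\tfrac15$, les fonctions $\widetilde\gamma$ et $\gamma_\Omega$ (donnée par \eqref{def:eq:gR;gammanu;Psi}) coïncident en $v=\tfrac15$ ainsi que leurs deux premières dérivées : un calcul direct, reposant sur $\beta_v^{*\prime}=-1/\{v(1-v)\}$ et sur $\beta_{1/5}^*=0$, donne $\widetilde\gamma(\tfrac15)=\gamma_\Omega(\tfrac15)=\tfrac15$, $\widetilde\gamma'(\tfrac15)=\gamma_\Omega'(\tfrac15)=-\tfrac32$, et $\widetilde\gamma''(\tfrac15)=\gamma_\Omega''(\tfrac15)$, égal à $0$ sur la branche gauche et à $\tfrac{125}{16}$ sur la branche droite. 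Comme $\widetilde\gamma$ et $\gamma_\Omega$ sont de classe $\sC^3$ de part et d'autre de $\tfrac15$, la formule de Taylor--Lagrange entraîne $\widetilde\gamma(\beta_p)-\gamma_\Omega(\beta_p)\ll|\delta_p|^3$, d'où, grâce à $|\delta_p|^3\leq\varepsilon_x$, l'estimation $(\log x)^{\widetilde\gamma(\beta_p)-\gamma_\Omega(\beta_p)}=\exp\big(O(|\delta_p|^3/\varepsilon_x)\big)=1+O(|\delta_p|^3/\varepsilon_x)$.

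Pour le second remplacement, les développements de Taylor de $\beta_p^*$ et de $\sqrt{\beta_p}$ en $\beta_p=\tfrac15$ donnent $-\beta_p^*\sqrt{\log_2 p}=-\beta_p^*\sqrt{\beta_p}/\sqrt{\varepsilon_x}=\sqrt{125}\,\delta_p/4\sqrt{\varepsilon_x}+O(\delta_p^2/\sqrt{\varepsilon_x})$. Je conclurais par une estimation lipschitzienne pour $\Psi$ : à partir de $\Psi'(v)=v\Psi(v)-1/\sqrt{2\pi}$ (pour $v\geq0$) et de $\Psi'(v)=-\e^{-v^2/2}/\sqrt{2\pi}$ (pour $v\leq0$), jointes aux encadrements classiques de $1-\Phi$, on obtient $|\Psi'(v)/\Psi(v)|\ll 1/\max(1,v)$ pour $v\geq 0$ et $|\Psi'(v)/\Psi(v)|\ll\e^{-v^2/2}$ pour $v\leq 0$; en intégrant $\Psi'/\Psi$ entre les deux arguments et en discutant selon que $|\delta_p|\leq\sqrt{\varepsilon_x}$ ou non, on en déduit que le rapport $\Psi(-\beta_p^*\sqrt{\log_2 p})/\Psi(\sqrt{125}\,\delta_p/4\sqrt{\varepsilon_x})$ vaut $1+O(\sqrt{\varepsilon_x}+|\delta_p|^3/\varepsilon_x)$, et en reportant dans la formule ci-dessus on obtient \eqref{eq:MOmega:zonecrit:in:in}. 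Le point le plus délicat me paraît être précisément ce dernier contrôle : il faut majorer uniformément le rapport des deux valeurs de $\Psi$ sur tout le domaine $|\delta_p|\leq\varepsilon_x^{1/3}$, et en particulier lorsque $\delta_p/\sqrt{\varepsilon_x}$ est grand — régime où $\Psi(v)\asymp 1/v$ si $\delta_p>0$, et où il faut tirer parti de la décroissance gaussienne de $\Psi'$ si $\delta_p<0$ — afin d'absorber l'erreur $O(\delta_p^2/\sqrt{\varepsilon_x})$ sur l'argument dans le terme $O(|\delta_p|^3/\varepsilon_x)$.
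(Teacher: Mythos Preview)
Your proposal is correct and follows essentially the same route as the paper: both convert $\Phi$ to $\Psi$ via the identity $\Phi(w)=\e^{-\min(w,0)^2/2}\Psi(-w)$, absorb the resulting exponential into the exponent of $\log x$, compare that modified exponent to $\gamma_\Omega(\beta_p)$ by a Taylor argument at $v=\tfrac15$ on each branch (the paper writes out the cubic coefficients $-\tfrac{3125}{768}$ and $-\tfrac{3125}{192}$ explicitly, you instead match derivatives up to order~$2$ and invoke Taylor--Lagrange, which is the same thing), and finally replace $-\beta_p^*\sqrt{\log_2 p}$ by $\sqrt{125}\,\delta_p/4\sqrt{\varepsilon_x}$ using $\beta_p^*=-\tfrac{25}{4}\delta_p+O(\delta_p^2)$. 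Your treatment of this last step --- the Lipschitz-type control of the ratio of the two $\Psi$-values, split according to the sign and size of $\delta_p/\sqrt{\varepsilon_x}$ --- is more detailed than the paper, which simply declares the conclusion an immediate consequence; your analysis there is sound and the bound $O(\delta_p)\leqslant O(|\delta_p|^3/\varepsilon_x)$ for $|\delta_p|\geqslant\sqrt{\varepsilon_x}$ is exactly what is needed.
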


\begin{proof}
	Considérons les fonctions
	\begin{align*}
		F^+(v)&:=1-v\Big\{4+2\log\frac{1-v}{4v}+\tfrac12\Big(\log\frac{1-v}{4v}\Big)^2\Big\}-\big\{1-2\sqrt{v(1-v)}\big\}\quad(0<v<1),\\
		F^-(v)&:=1-v\Big\{4+2\log\frac{1-v}{4v}+\Big(\log\frac{1-v}{4v}\Big)^2\Big\}-\tfrac12\{1-3v\}\quad(0<v<1).
	\end{align*}
	Deux développements de Taylor consécutifs à l'ordre $4$ au point $v=\tfrac15$ fournissent les estimations
	\begin{equation}\label{eq:deltapowers}
		\begin{aligned}
		F^+(v)=-\tfrac{3125}{768}\big(v-\tfrac15\big)^3+O\big(\big\{v-\tfrac15\}^{4}\big)\quad(v\ll 1),\\
		F^-(v)=-\tfrac{3125}{192}\big(v-\tfrac15\big)^3+O\big(\big\{v-\tfrac15\}^{4}\big)\quad(v\ll 1),
		\end{aligned}
	\end{equation}
	dont nous déduisons
	\begin{equation}\label{eq:logPowers:critZone:in}
		\begin{aligned}
			(\log x)^{1-\beta_p\{4+2\beta_p^*+{\beta_p^*}^2/2\}}&=(\log x)^{1-2\sqrt{\beta_p(1-\beta_p)}}\{1+O(|\delta_p|^3/\varepsilon_x)\}\quad(|\delta_p|\leq\varepsilon_x^{1/3}),\\
			(\log x)^{1-\beta_p\{4+2\beta_p^*+{\beta_p^*}^2\}}&=(\log x)^{\{1-3\beta_p\}/2}\{1+O(|\delta_p|^3/\varepsilon_x)\}\quad(|\delta_p|\leq\varepsilon_x^{1/3}).
		\end{aligned}
	\end{equation}
	Puisque
	\[\beta_p^*=-\tfrac{25}4\delta_p+O(\delta_p^2),\]
	la formule \eqref{eq:MOmega:zonecrit:in:in} annoncée est alors une conséquence immédiate des estimations \eqref{eq:logPowers:critZone:in} appliquées à la formule \eqref{eq:Mp:critZone:in}.\end{proof}
	

Les formules obtenues aux Théorèmes \ref{th:eq:Mp:critZone:out} et \ref{th:eq:Mp:critZone:in} nous permettent d'établir le résultat principal de cet article.



\section{Preuve du Théorème \ref{th:eq:Mp}}\label{sec:recollement}

Les estimations obtenues aux Théorèmes \ref{th:eq:Mp:critZone:out} et \ref{th:eq:Mp:critZone:in} sont complémentaires et compatibles à la frontière de leurs domaines de validité. En effet, d'après la deuxième estimation \eqref{eq:Mp:critZone:out:Gomega}, nous avons, lorsque $\sqrt{\varepsilon_x}\leq\delta_p\leq\varepsilon_x^{1/3}$,
\begin{equation}\label{eq:MOmegap:collation:+}
	\begin{aligned}
		M_{\Omega}(x,p)&=\frac{\big\{1+O\big(\varepsilon_x/\delta_p^2\big)\big\}x\big\{\sqrt{1/5+\delta_p}+\sqrt{4/5-\delta_p}\big\}f_\Omega\big(2-25\delta_p/4+O(\delta_p^2)\big)}{2\sqrt\pi(1/5+\delta_p)^{1/4}(4/5-\delta_p)^{3/4}p(\log x)^{1-2\sqrt{\beta_p(1-\beta_p)}}\sqrt{\log_2 x}}\\
		&=\frac{\big\{1+O\big(\varepsilon_x/\delta_p^2\big)\big\}3x\e^{-\gamma/2}\HH_\Omega(2-25\delta_p/4)\sqrt{10}}{8\sqrt\pi\Gamma(3/2)p(\log x)^{1-2\sqrt{\beta_p(1-\beta_p)}}\sqrt{\log_2 x}},
	\end{aligned}
\end{equation}
puisque $\delta_p\ll\varepsilon_x/\delta_p^2$. Rappelons par ailleurs la définition de $\HH_\Omega^*(z)$ en \eqref{def:Hnu;H0}, de sorte que
\begin{equation}\label{eq:sHOmega:sH0:collation:+}
	\HH_\Omega\big(2-\tfrac{25}4\delta_p\big)=\frac{8\HH_\Omega^*(2-25\delta_p/4)}{25\delta_p}=\frac{\{1+O(\delta_p)\}8\gh}{25\delta_p}.
\end{equation}
En regroupant les estimations \eqref{eq:MOmegap:collation:+} et \eqref{eq:sHOmega:sH0:collation:+}, nous obtenons
\begin{equation}\label{eq:recollement:bigbeta:inter}
	M_{\Omega}(x,p)=\frac{\big\{1+O\big(\varepsilon_x/\delta_p^2\big)\big\}2\sqrt 2\gc x}{5\sqrt{5\pi} p(\log x)^{1-2\sqrt{\beta_p(1-\beta_p)}}\delta_p\sqrt{\log_2 x}},
\end{equation}
Par ailleurs, en remarquant que
\[\Phi(\beta_p^*\sqrt{\log_2 p})=\frac{\big\{1+O\big(\varepsilon_x/\delta_p^2\big)\big\}(\log p)^{-{\beta_p^*}^2/2}}{|\beta_p^*|\sqrt{2\pi\log_2 p}}=\frac{\big\{1+O\big(\varepsilon_x/\delta_p^2\big)\big\}2\sqrt 2}{5\sqrt{5\pi}(\log x)^{\beta_p{\beta_p^*}^2/2}\delta_p\sqrt{\log_2 x}},\]
la formule \eqref{eq:Mp:critZone:in} fournit, pour $\sqrt{\varepsilon_x}\leq\delta_p\leq\varepsilon_x^{1/3}$,
\[M_\Omega(x,p)=\frac{\big\{1+O\big(\varepsilon_x/\delta_p^2+\delta_p^3/\varepsilon_x\big)\big\}2\sqrt 2\gc x}{5\sqrt{5\pi}p(\log x)^{1-\beta_p\{4+2\beta_p^*+{\beta_p^*}^2/2\}}\delta_p\sqrt{\log_2 x}}.\]
La première formule \eqref{eq:logPowers:critZone:in} permet alors de retrouver l'estimation \eqref{eq:recollement:bigbeta:inter} avec terme d'erreur légèrement moins précis sur le domaine $\varepsilon_x^{2/5}\leq\delta_p\leq\varepsilon_x^{1/3}$. En particulier, les formules \eqref{eq:Mp:critZone:out:Gomega} et \eqref{eq:Mp:critZone:in} coïncident pour $\delta_p=\varepsilon_x^{2/5}$.\par
Enfin, dans le domaine $-\varepsilon_x^{1/3}\leq\delta_p\leq-\sqrt{\varepsilon_x}$, la formule \eqref{eq:Mp:critZone:out:Gomega} coïncide avec la formule \eqref{eq:Mp:critZone:in} avec un terme d'erreur plus précis dès lors que $\sqrt{\varepsilon_x}=o(|\delta_p|)$.\par
Cela complète la démonstration.\qed \bigskip

\noindent{\it Remerciements.} L'auteur tient à remercier chaleureusement le professeur Gérald Tenenbaum pour l'ensemble de ses conseils et remarques avisés ainsi que pour ses relectures attentives durant la réalisation de ce travail.





\begin{thebibliography}{}
	\bibitem{bib:balazard:thesis}
	M. Balazard,
	Sur la répartition des valeurs de certaines fonctions arithmétiques additives, \textit{Thèse}, Université de Limoges, 1987.
	\bibitem{bib:erdoskac}
	P. Erd\H{o}s {\&} M. Kac, On the Gaussian law of errors in the theory of additive number theoretic functions, \textit{Amer. J. Math.}, vol. 62, 1940, p. 738–742.
	\bibitem{bib:erdos}
	P. Erd\H{o}s, On the distribution function of additive functions, \textit{Ann. Math.}, 47:1–20, 1946.
	\bibitem{bib:hall_tenenbaum}
	R.R. Hall {\&} G. Tenenbaum,
	\it Divisors, \rm Cambridge Tracts in Mathematics 90, Cambridge University Press, Cambridge (1988)
	\bibitem{bib:hardyrama}
	G.H. Hardy {\&} S. Ramanujan, The normal number of prime factors of a number $n$, \textit{Q. J. Math.}, Oxf. Ser., 48:76–92,
1917.
	\bibitem{bib:doyon}
	J.-M. De Koninck, N. Doyon {\&} V. Ouellet,
	The limit distribution of the middle prime factors of an integer,
	{\it Integers} \textbf{19} :A56, 2019.
	\bibitem{bib:dekoninck}
	J.-M. De Koninck {\&} G. Tenenbaum,
	Sur la loi de répartition du $k$-ième facteur premier d'un entier, \textit{Math. Proc. Cambridge Phil. Soc.},
	\textbf{113} (2002), 133-191.
	\bibitem{lichtman}
	J. D. Lichtman,
	Mertens' prime product formula, dissected,
	{\it Integers} \textbf{21A} (2021), no. Ron Graham Memorial Volume, Paper No. A17, 15. MR4304373.
	\bibitem{bib:mansta1}
	E. Manstanavi\v{c}ius, An invariance principle for additive arithmetic functions, \textit{Dokl. Akad. Nauk SSSR} \textbf{298} (1988), n°6, 1316-1320.
	\bibitem{bib:mansta2}
	E. Manstanavi\v{c}ius, Natural divisors and the Brownian motion, \textit{J. Théor. Nombres Bordeaux} \textbf{8} n°1 (1996), 159-171.
	\bibitem{bib:mansta3}
	E. Manstanavi\v{c}ius {\&} N.M. Timofeev, A functional limit theorem related to natural divisors. \textit{Acta Math. Hungar.} \textbf{75} (1997), n$^{\rm os}$ 1-2,1-13.
	\bibitem{bib:norton}
	K.K. Norton,
	On the number of restricted prime factors of an integer. I, \textit{Illinois J. Math.}, 20 (1976). 
	\bibitem{bib:pollack}
	N. McNew, P. Pollack {\&} A. Singha Roy,
	The distribution of intermediate prime factors, \textit{Illinois J. Math.} 68 (3) 537-576, September 2024.
	\bibitem{bib:renyi;turan}
	A. R\'enyi {\&} P. Tur\'an,  On a theorem of Erd\H{o}s-Kac, \textit{Acta Arithmetica} 4.1 (1958): 71-84.
	\bibitem{bib:papier1}
	J. Rotgé, Étude statistique du facteur premier médian, 1: valeur moyenne, prépublication, arXiv:2501.15947.
	\bibitem{bib:papier3}
	J. Rotgé, Étude statistique du facteur premier médian, 3 : lois de répartition, \textit{Bull. Sci. Math.} \textbf{203} (2025) 103641. 
\end{thebibliography}
\end{document}